\numberwithin{equation}{section}
\newtheorem{theorem}{Theorem}
\newtheorem{meta-thm}[theorem]{Meta-Theorem}
\newtheorem{lemma}[theorem]{Lemma}
\newtheorem{cor}[theorem]{Corollary}
\newtheorem{remark}[theorem]{Remark}
\newcommand{\R}{\mathbb{R}}
\newcommand{\N}{\mathbb{N}}
\newcommand{\T}{\mathbb{T}}
\def\ep{\varepsilon}
\def\C{{\mathcal C}}
\def\E{{\mathcal E}}
\def\eps{\varepsilon}
\begin{document}
\title[state-dependent delay perturbation to an ODE]
{Parameterization method for state-dependent delay perturbation of 
an ordinary differential equation} 
\author[J. Yang]{Jiaqi Yang}
\address{
School of Mathematics,
Georgia Institute of Technology,
686 Cherry St.. Atlanta GA. 30332-0160 }
\email{jyang373@gatech.edu}

\author[J. Gimeno]{Joan Gimeno} 
\address{
Departament de Matem\`atiques i Inform\`atica,
Barcelona Graduate School of Mathematics (BGSMath),
Universitat de Barcelona (UB).
Gran Via de les Corts Catalanes 585, 08007, Barcelona, Spain.
}
\email{joan@maia.ub.es}

\author[R. de la Llave]{Rafael de la Llave}
\address{
School of Mathematics,
Georgia Institute of Technology,
686 Cherry St.. Atlanta GA. 30332-0160 }
\email{rafael.delallave@math.gatech.edu}

\thanks{
J.~Y. and R.~L. were partially supported by NSF grant DMS-1800241.
J.~G. acknowledges financial support from the Spanish Ministry of Economy and 
Competitiveness, through the Mar\`ia de Maeztu Programme for Units of 
Excellence in R\&D (MDM-2014-0445). Also Spanish grants PGC2018-100699-B-I00 
(MCIU/AEI/FEDER, UE) and the Catalan grant 2017 SGR 1374.
This research was funded by H2020-MCA-RISE \#734577 which supported 
visits of J.~G. to Georgia Inst. of Technology and of J.~Y. to Univ. of 
Barcelona. 
J.~G. thanks School of Mathematics GT for hospitality
Springs 2018 and 2019.
}


\begin{abstract}
We consider state-dependent delay equations (SDDE) obtained by adding delays
to a planar ordinary differential equation with a limit cycle. 
These situations appear in models of several physical processes, where small delay effects are added. Even if the delays are small, they
are very singular perturbations since the natural phase space of an SDDE is 
an infinite dimensional space. 

We show that the SDDE admits solutions which resemble the solutions of  the ODE. 
That is, there exist a periodic solution and a two parameter family of solutions 
whose evolution converges to the periodic solution (in the ODE case, these are 
called the isochrons). Even if the phase space of the SDDE is naturally a space of 
functions, we show that there are initial values which lead to solutions similar to that of the ODE.

The method of proof bypasses the theory of existence, uniqueness, dependence on 
parameters of SDDE. We consider the class of functions of time that have a well defined 
behavior (e.g. periodic, or asymptotic to periodic) and derive a functional 
equation which imposes that they are solutions of the SDDE.  These functional equations 
are studied using methods of functional analysis. We provide a result in 
``a posteriori'' format: Given an approximate solution of the functional equation, which 
has some good condition numbers, we prove that there is true solution close to the approximate one. 
Thus, we can use the result to validate the results of  numerical computations.
The method of proof leads also to practical algorithms. In a companion paper, 
we present the implementation details and representative results. 

One feature of the method presented here is that it allows to obtain smooth dependence on parameters for the periodic solutions and their slow stable manifolds without studying the smoothness of the flow (which seems to be problematic for SDDEs, for now the optimal result on smoothness of the flow is $C^1$). 
\end{abstract}

\subjclass[2010]{34K19 
39A23 
39A60  
39A30 
37G15 
} 
\keywords{SDDE, limit cycle, slow stable manifolds, perturbation} 

\maketitle

\tableofcontents
\fancyhf{}

\section{Introduction}\label{sec:intro}
Many causes in natural sciences take some time to generate an effect. 
If one incorporates this delay in the models, one is lead 
to descriptions of systems in which the derivatives of states are functions 
of the states at previous times.  These are commonly called 
delay differential equations.

In the case that the delay is constant (say $1$), one can 
prescribe the data in an interval $[-1,0]$ and then propagate 
the differential equation. This leads to a rather satisfactory
theory of existence and uniqueness and even a qualitative theory
\cite{Driver, Hale, HaleLunel, Diek}. Note that the natural phase
space is a space of functions on $[-1,0]$. This is an infinite dimensional 
space.

When the delay is not a constant and depends on 
the state, one needs to consider State-Dependent Delay Equations
(SDDE for short). In contrast with the constant delay case, 
the mathematical theory of SDDE has complications. The paper \cite{Walt} made important progress for the appropriate phase space for SDDE. 
We refer to \cite{hart} for a very comprehensive survey 
of the mathematical theory and the (rather numerous) applications. 

In this paper, we consider a simple model (two-dimensional 
ordinary differential equation with a limit cycle) 
and show that all solutions close to the limit cycle present in this model 
persist (in some appropriate sense) when we add a state-dependent delay perturbation.  
  Models of the form considered in this paper appear in several concrete problems in the natural 
sciences (circuits, neuroscience, and population dynamics), see \cite{hart}. 

The result is subtle to formulate 
since the perturbation of adding a state-dependent delay is very singular, it changes the nature of the equation: the unperturbed 
case is an ODE and the perturbed case is an infinite-dimensional 
problem. The basic idea is that 
we establish the existence of 
some finite-dimensional families of solutions (in the phase space of 
the SDDE), which resemble (in an appropriate sense)
the solutions of the original ODE. 
This allows to establish many other properties (e.g. dependence on parameters) 
which may be false for general solutions of SDDE. 
We hope that the method can be extended
in several directions. For example, we hope to produce higher dimensional families, 
families with other behaviors, and more complicated models.   The conjectural picture
that appears is that in SDDEs, even if the dynamics in a full Banach space of 
solutions is problematic, one can find  a very rich set of solutions organized in 
families even if the families may not fit together well and leave gaps, so 
that a  general theory may have problems \cite{CurrieS}. 

\subsection{Overview}

Let us start by an informal overview of the method.  It is known that
in a neighborhood of a limit cycle of a 2-dimensional ODE, we can find
$K: \T \times [-1,1]\to\R^2$, and $\omega_0$ and $\lambda_0$ in such a
way that for any $\theta, s$, the function given by
\begin{equation}\label{parameterization} 
x(t) = K( \theta + \omega_0 t, s e^{\lambda_0 t} )
\end{equation} 
solves the ODE, see \cite{HL13}. The fact that all the functions of the form \eqref{parameterization} 
are solutions of the original ODE is equivalent to a functional equation 
for $K$, $\omega_0$ and $\lambda_0$. Efficient methods to study the
resulting functional equation were presented in \cite{HL13}. We will, henceforth, assume that $K$, $\omega_0$, $\lambda_0$ are known.

Similarly, for the perturbed case, when we impose that for fixed
$\theta$, $s$ the function of the form
\begin{equation}\label{parameterization2}
x(t) = K\circ W(  \theta + \omega t, s e^{\lambda t} )
\end{equation}
is a solution of our delay differential equation, we obtain
a functional equation for $W$, $\omega$, $\lambda$ (see \eqref{inv}), which we call ``invariance equation".  Note that the 
unknowns in \eqref{inv} are the embedding $W$ and the numbers 
$\omega, \lambda$.    

Our goal will be to solve \eqref{inv} using techniques of functional 
analysis. The equation is rather degenerate and our treatment has 
several steps. We first find some asymptotic expansions  in powers of $s$ to a finite order, and 
then, we formulate a fixed point problem for the remainder. 
Due to the delay, information at previous times is needed. We anticipate a technical problem is that the domain of 
definition of the unknown have to depend on the details of the unknown. 
Similar problems appear in the theory of center manifolds \cite{Carr}. Here we have to resort to cut-offs and extensions. After this process, we get a prepared equation, \eqref{invv}, which has the same format as equation \eqref{inv}, and agrees with equation \eqref{inv} in a neighborhood. Solutions of the prepared equation which stay in the neighborhood will be solutions of the original problem.

The main results of this paper is Theorem \ref{thm:all}, which establish 
that with respect to some condition numbers of the problem, verified for small enough $\ep$, given an approximate solution of the extended invariance equation
\eqref{invv} of the problem, one obtain a true solution nearby. (This is sometimes referred as \emph{``a posteriori''} format.)

As in the case of center manifolds, the family of solutions found to the original problem may depend on the extension considered.

\subsection{Some comments on the results}

In a geometric language, we can describe our procedure as saying that 
we are finding an embedding of the phase space of the ODE into the phase space of 
the SDDE in such a way that the range of the embedding is foliated by solutions of
the SDDE and that the flow in this manifold is similar to the flow of the ODE. 
Note that this bypasses the need of developing a general theory of 
solutions of the SDDE. We only construct a 2-D manifold of solutions of 
the SDDE. For these solutions, it is possible 
to discuss comfortably many desirable properties such 
as smooth dependence on the model, etc.

Philosophies similar to that of this paper (finding 
solutions of functional equations that inply the existence of
solutions of special kinds)  have already been used in \cite{HR, HR2, Livia19}
to study quasi-periodic solutions of SDDE. 
For constant delay equations, we can find \cite{Lessard10, KissL12} for the study of periodic solutions. The paper \cite{KissL17a} studies specific  models similar to 
ours for constant delay perturbations.  
The paper 
\cite{LiL} studies  quasi-periodic solutions analytically, \cite{GroothedeMJ17} 
studies numerically  unstable manifolds near fixed points. 
The papers \cite{Sieber}, \cite{Hum}, \cite{HumphriesBCHS16,MagpantayKW14}
study normal forms  and numerical 
computations of periodic and quasi-periodic solutions of SDDEs  and 
obtain bifurcations and numerical solutions.
Even if the evolutions of the SDDEs considered above are difficult to define
as smooth evolutions, we believe that the results above 
can be understood as suggesting the existence of a subsystem of 
the evolution which indeed experiences bifurcations. The careful numerical solutions of 
\cite{Hum} can presumably be validated.

By solving the invariance equation, \eqref{invv}, one actually obtains a parameterization of the limit cycle and its isochrons (2-dimensional slow stable manifold of the limit cycle). In other words,
 $K\circ W(\theta, 0)$ parameterizes the limit cycle, and for fixed $\theta$, we
have $K\circ W( \theta,s)$ parameterizes the local slow stable manifold of
the point $K\circ W( \theta,0)$ on the limit cycle. We remark that in
some previous work, Chapter 10 of \cite{HaleLunel}, persistence of limit cycles were
studied with a different method in the setting of retarded functional
differential equations(RFDE). They have also studied
infinite-dimensional stable manifolds of periodic orbits of RFDE. In
this paper, we study SDDE, and get a parameterization of the
submanifold of the infinite-dimensional stable manifold, which corresponds to
the eigenvalue of the time-T map with largest modulus. In this sense,
we think that the manifold in this paper is practically more relevant
than the infinite-dimensional manifolds. For a more detailed
comparison of the results of this paper with the study of SDDE as
evolutionary equations, see Section~\ref{ssc:evolution}.

Of course, the notions of approximate solutions and  that of solutions close 
to the approximate ones, requires to specify a norm in space of functions. 
In \cite{HL13}, it was natural to specify analytic norms. In this 
paper, however, we use spaces of finitely differentiable functions. Indeed,
we conjecture that the solutions we produce are not more 
than finitely differentiable.

The a-posteriori format of Theorem~\ref{thm:all} allows us to validate
approximate solutions produced even by non-rigorous methods.  In that
respect, we note that the related paper \cite{Joan19} develops
numerical methods that produce approximate solutions. Some papers 
that study formal expansions in the delay are 
\cite{CasalFreed} for periodic solutions and bifurcations, 
mostly with constant delay, and  \cite{Livia19} which studies periodic 
and quasiperiodic solutions for SDDE (and even 
more general models such as those appearing in  electrodynamics). 

Using
Theorem~\ref{thm:all}, we obtain that the numerical solutions produced
in \cite{Joan19}, have true solutions nearby and that the formal
expansion produced in \cite{Livia19} are not just formal expansions
but are asymptotics to a true solution.  For an earlier example or
related philosophies, we mention that asymptotic expansions for
equations with small constant delay was produced and validated in the
paper \cite{Chiconedelay}.

A rather subtle point is that we do not obtain uniqueness of the
solution. The reason is that the nature of the problem involves cutting off
the perturbation and the solution produced may depend on the cut-off
function used.  Both the finite regularity and the lack of uniqueness
due to the introduction of a cut-off are reminiscent to effects found
in the study of center manifolds \cite{Carr,Lan}.  Of course, since
one of the goals of the paper is to remedy the paucity of solutions of
SDDEs, having many solutions is a feature not a bug. The dependence
of the solutions in the cut-off has to be small as the delay tends to
zero (note that the asymptotic expansions in \cite{Livia19} do not
depend on the cut-off), but we expect that they are small in other
senses similar to the situation in center manifolds \cite{Sijbrand}.
We will not formulate here results making precise this intuition.

We hope that the methods of this paper can be extended to prove the
existence of other finite-dimensional families of solutions that are
not close to families of solutions of the unperturbed ODE.

\subsection{Organization of the paper} 

We  introduce the problem and formulate the equations to be solved in
section \ref{sc:formu}. In  Section \ref{sc:basic} we present 
some notations and some classical results in functional analysis which
will be used in the proof. We state our main results in section
\ref{sc:mainr}. We give an overview of the proof in section
\ref{sc:overpf}. Detailed proofs of the results are given in
section \ref{sc:pf}.

\section{Formulation of the problem} \label{sc:formu}

We consider an ordinary differential equation in the plane
\begin{equation} \label{unper}
\dot x(t) = X_0( x(t)),
\end{equation}
where $x(t)\in \R^2$, $X_0: \R^2\to \R^2$ is analytic. We assume above equation \eqref{unper} admits a limit cycle. 
Clearly, there is a two dimensional family of solutions to 
this ordinary differential equation. This family can be parameterized
e.g. by the initial conditions, but as we will see, there are more efficient 
parameterizations near the limit cycle. 

The goal of this paper is to study a state-dependent delay equation
that is a  ``small'' modification of equation \eqref{unper} in 
which we add some small term for the derivative 
that depends on some previous time. 
Adding some dependence on the solution at previous times,  arises
naturally in many problems. Limit cycles appear in feedback loops and
if the feedback loops have a delayed effect, which depends on the
present state, to incorporate them in the model, we are lead to:

\begin{equation} \label{SDDE}
  \dot x(t) = X(x(t), \varepsilon x(t-r(x(t)))), \qquad 0\leq \varepsilon \ll 1
\end{equation}
Where $x(t)\in\mathbb{R}^2$, $X:\mathbb{R}^2\times \mathbb{R}^2\to \mathbb{R}^2$ is analytic, the state-dependent delay function $r:\mathbb{R}^2\to [0,h]$ is as smooth as we need. The equation \eqref{SDDE} is
formally a perturbation of \eqref{unper} with
$X(x, 0) = X_0(x)$. 

We can rewrite \eqref{SDDE} as
\begin{equation}\label{SDDE2}
 \dot x(t) = X(x(t),0) + \varepsilon P(x(t), x(t-r(x(t))),\varepsilon), 
\end{equation} 
where we define,
 \[
  \varepsilon P(x(t), x(t-r(x(t))),\varepsilon)= X(x(t), \varepsilon x(t-r(x(t)))-X(x(t),0).
 \]

The question we want to address in this paper is to find 
a two dimensional family of solutions of \eqref{SDDE}, 
which resembles the two dimensional family of solutions of
\eqref{unper}.  This is a much simpler problem than developing a
general theory of existence of solutions of an SDDE, which is 
a rather difficult problem. Nevertheless, persistence of some family of solutions is of physical interest.

Note that, when $\varepsilon > 0$ the equation \eqref{SDDE2} is
an SDDE, which is an equation of a very different nature from 
the equation when $\varepsilon = 0$, which is an ODE. Hence, we 
are facing a very singular perturbation in which the nature
of the problem changes drastically from an ODE -- whose phase 
space is $\mathbb{R}^2$ to an SDDE -- whose natural phase space is 
a space of functions.  The precise meaning of the continuation of 
the unperturbed solutions into solutions of the perturbed problem 
is somewhat subtle. 

\subsection{Limit cycles and isochrons for ODEs}\label{sc:lciso}

Under our assumption, there exists a limit cycle in the unperturbed
equation \eqref{unper}. In a neighborhood of the limit cycle (stable periodic orbit), points
have asymptotic phases(see \cite{Win,Guc}).  The points sharing the
same asymptotic phase as point $p$ on the limit cycle is the stable
manifold for point $p$. The stable manifold of the limit cycle is
foliated by the stable manifolds for points on the limit cycle
(sometimes referred as stable foliations). The stable manifolds for
points on the limit cycle are also called isochrons in the biology
literature, see \cite{Guc}, \cite{Win}.

According to \cite{HL13}, we can find a parameterization of the limit 
cycle and the isochrons in a neighborhood of the limit cycle. More precisely, there exists real numbers $\omega_0>0$, $\lambda_0<0$, and an analytic local 
diffeomorphism $K:{\mathbb{T}}\times[-1,1]\to \mathbb{R}^2$, such that
\begin{equation}\label{invk}
X_0(K(\theta, s))=DK(\theta, s)
\begin{pmatrix}
  \omega_0   \\
  \lambda_0 s  
\end{pmatrix},
\end{equation}
where $K$ is periodic in $\theta$, i.e. $K(\theta+1,s)=K(\theta, s)$. Saying that $K$ solves
\eqref{invk} is equivalent to saying that for fixed parameters $\theta$ and $s$,  the function $x(t)=K(\theta+\omega_0t, se^{\lambda_0 t})$ solves 
\eqref{unper} for all $t$ such that $|se^{\lambda_0t}|<1$. Notice that when $s=0$, $K(\theta,0)$ parameterizes the limit cycle, and for 
a fixed $\theta$ with varying $s$, we get the local stable manifold of the point 
$K(\theta,0)$.

Note that geometrically, $K$ can be viewed as a change of coordinates, under which the original vector field is equivalent to the vector field $X_0'(\theta,s)=(\omega_0,\lambda_0s)$ in the space ${\T}\times[-1,1]$. We could have started with this vector field $X_0'$ and then added some perturbation to it. However, to keep contact with applications, we decided not to do this.

\begin{remark}\label{rmk:nonuni}
 As pointed out in \cite{HL13}, the $K$ solving \eqref{invk} can never be unique. If $K(\theta,s)$ is a solution of \eqref{invk}, then for any $\theta_0$, $b\neq 0$, $K(\theta+\theta_0, bs)$ will also be a solution of \eqref{invk}. \cite{HL13} also shows that this is the only source of non-uniqueness. We will call such $b$ scaling factor, and such $\theta_0$ phase shift. Note that by using a different $b$, we can change the domain of $K$. However, no matter how the domain changes, $s$ has to lie in a finite interval.
 \end{remark}
 
In this paper, for the equation after perturbation \eqref{SDDE}, we will show if 
$\ep$ is small enough, the limit cycle and its isochrons persist as limit cycle and its slow stable manifolds of the delayed model. We will use the name isochrons to denote the slow stable manifolds and distinguish them from the infinite dimensional stable manifolds similar to the one established by \cite{HaleLunel}. Meanwhile, we will find a parameterization of them. More precisely, we will find 
$\omega>0$, $\lambda<0$, and $W$ which maps a subset of 
${\mathbb{T}}\times \mathbb{R}$ to a subset of 
${\mathbb{T}}\times \mathbb{R}$, such that for small $s$, $K\circ W(\theta, s)$ gives us a parameterization of the limit cycle 
as well as of its isochrons in a neighborhood. We assume that $W$ can be lifted to a function from $\R^2$ to $\R^2$ (we will use the same letter to denote the function before and after the lift) which satisfies the periodicity condition: 
\begin{equation}\label{pc}
W(\theta+1, s)=W(\theta, s)+\left(\begin{smallmatrix}1\\0\end{smallmatrix}\right).
\end{equation}

We remark that $K\circ W$ being a parameterization of the limit cycle and its isochrons is the same 
as for given $\theta$, and $s$ in domain of $W$, $x(t)=K\circ W(\theta+\omega t, se^{\lambda t})$ solving $\eqref{SDDE}$ for $t\geq 0$.

\subsection{The invariance equation and the prepared invariance equation} 

Substitute $x(t)=K\circ W(\theta+\omega t, se^{\lambda t})$ into \eqref{SDDE2}, let $t=0$, with the fact that $DK$ is invertible, we get $x(t)=K\circ W(\theta+\omega t, se^{\lambda t})$ solves equation \eqref{SDDE} if and only if $W$ satisfies 
\begin{equation}\label{inv}
DW(\theta,s)
\begin{pmatrix}
  \omega   \\
  \lambda s  
\end{pmatrix}
=
\begin{pmatrix}
  \omega_0   \\
  \lambda_0 W_2(\theta, s)
\end{pmatrix}
+\varepsilon Y(W(\theta,s), \widetilde{W}(\theta, s),\varepsilon),
\end{equation}
where $W_2(\theta, s)$ is the second component of $W(\theta, s)$, $\widetilde{W}$ is the term caused by the delay:
\[
\widetilde{W}(\theta, s)=W(\theta-\omega r\circ K(W(\theta, s)),se^{-\lambda r\circ K(W(\theta, s))}),
\]
and
\[
Y(W(\theta,s), \widetilde{W}(\theta, s),\varepsilon)=(DK(W(\theta, s)))^{-1}P(K(W(\theta, s)), K(\widetilde{W}(\theta, s)), \varepsilon).
\]

Note that even if $\widetilde{W}$ is typographically convenient, $\widetilde{W}$ is a very complicated function of $W$, it involves several compositions.

Now we need to look at equation \eqref{inv} more closely and specify the domain and range of $W$. One cannot define $W$ on ${\mathbb{T}}\times [-b,b]$, where $b>0$  is a constant. Indeed, observing the second component in expression of $\widetilde{W}$, $se^{-\lambda r\circ K(W(\theta, s))}$, one will note that $|se^{-\lambda r\circ K(W(\theta, s))}|>|s|$. This will drive us out of the domain of $W$ if $W$ is defined for second component lying in a finite interval. Therefore, $W$ has to be defined for $s$ on the whole real line. So we let $W:{\mathbb{T}}\times \mathbb{R}\to {\mathbb{T}}\times \mathbb{R}$. There is another technical issue as pointed out in the following Remark \ref{exten}.

\begin{remark}\label{exten}
When $\ep$ is small, we expect $W$ to be close to the identity map. Then for $s$ far from $0$, $W(\theta, s)$ does not lie in the domain of $K$, thus the invariance equation is not well defined.
\end{remark}

Similar to the study of center manifolds. We will use cut-off functions to resolve the above issues.

We transform our original equation \eqref{inv} into a well-defined equation of the same format:
\begin{equation}\label{invv}
DW(\theta,s)
\begin{pmatrix}
  \omega   \\
  \lambda s  
\end{pmatrix}
=
\begin{pmatrix}
  \omega_0   \\
  \lambda_0 W_2(\theta, s)
\end{pmatrix}
+\varepsilon \overline{Y}(W(\theta,s), \widetilde{W}(\theta, s),\varepsilon),
\end{equation}
where $\overline{Y}$ is defined on $({\mathbb{T}}\times\R)^2\times\mathbb{R}_+$, and $\overline{r\circ K}$ is defined on ${\mathbb{T}}\times \mathbb{R}$, with slight abuse of notation, we still denote the term caused by the delay as $\widetilde{W}$:
\[
\widetilde{W}(\theta, s)=W(\theta-\omega \overline{r\circ K}(W(\theta, s)),se^{-\lambda \overline{r\circ K}(W(\theta, s))}).
\]

Following standard practice in theory of center manifolds of differential equations, see \cite{Carr}. We introduce the extensions as follows:

\begin{itemize}
\item For $r\circ K$ which is defined only on $\mathbb{T}\times[-1,1]$, we  define a function $\overline{r\circ K}$ on $\mathbb{T}\times \mathbb{R}$, which agrees with $r\circ K$ on $\mathbb{T}\times[-\frac{1}{2},\frac{1}{2}]$, and is zero outside of $\mathbb{T}\times[-1,1]$.

\item For $Y:(\mathbb{T}\times[-1,1])^2\times\mathbb{R}_+\to \mathbb{R}^2$, we define $\overline{Y}:(\mathbb{T}\times \mathbb{R})^2\times\mathbb{R}_+\to\mathbb{R}^2$, which agrees with $Y$ on the set $(\mathbb{T}\times[-\frac{1}{2},\frac{1}{2}])^2\times\mathbb{R}_+$, and is zero outside $(\mathbb{T}\times[-1,1])^2\times\mathbb{R}_+$.
\end{itemize}

To achieve above extensions, let $\phi:\mathbb{R}\to[0,1]$ be a $C^{\infty}$ cut-off function:
\begin{equation}\label{cutoff}
\phi(x)=
\begin{cases}
1 & \text{if}\quad |x|\leq\frac{1}{2},\\
0 & \text{if}\quad |x|>1.
\end{cases}
\end{equation}

We define 
\[
\overline{r\circ K}(\theta, s)=r\circ K(\theta, s)\phi(s),
\]
and,
\[
\overline{Y}(W(\theta,s), \widetilde{W}(\theta, s),\varepsilon)=Y(W(\theta,s), \widetilde{W}(\theta, s),\varepsilon)\phi(W_2(\theta, s))\phi( \widetilde{W}_2(\theta, s)).
\]

After these extensions, the main equation \eqref{inv} is turned into the well-defined equation \eqref{invv}. Note that, $\overline{Y}$, $\overline{r\circ K}$ defined above have bounded derivatives in their domain up to any order.

\begin{remark}
In the definition of cut-off function, one can let $\phi$ to vanish for $|x|>c_1$ where the constant $c_1<1$, and let $\phi=1$ for $|x|\leq c_2$ where the constant $c_2<c_1$.
\end{remark}

\begin{remark}
The use of the cut-off function here is very similar to the use of cut-offs in the study of the center manifolds in the literature, if we choose a different cut-off function $\phi$, we will possibly end up with a different $W$, which solves \eqref{invv} with the new cut-off function $\phi$. 
\end{remark}

\begin{remark}
If instead of having a stable periodic orbit, the unperturbed ODE has an unstable periodic orbit, then $\lambda_0$ in \eqref{invk} is positive. Analogous results to Theorems \ref{thm:zero} and  \ref{thm:all} will give us the parameterization of the periodic orbit and the unstable manifold for small $\ep$. The same proof, only with minor modifications, will work. At the same time, the invariance equation \eqref{inv} will be well-defined for a suitably chosen domain for $W$, we do not need to do extensions. Similarly, the idea here will also work for advanced equations, which have the same format as equation \eqref{SDDE}, with $r:\R^2\to[-h,0]$. We omit the details for these cases.
\end{remark}

\subsection{Representation of the unknown function}\label{formw}
In order to study the functional equation \eqref{invv}, we consider $W$ of the form:
\begin{equation}\label{wform}
W(\theta, s)=\sum^{N-1}_{j=0}W^j(\theta)s^j+W^>(\theta, s),
\end{equation} 
solving \eqref{invv}. Where $W^0(\theta)$ is the zeroth order term in $s$, $W^j(\theta)s^j$ is the j-th order term in $s$, $W^>(\theta, s)$ is of order at least $N$ in $s$. $W^j: \mathbb{T} \to \mathbb{T}\times \R$, and $W^>:\mathbb{T}\times \R\to \mathbb{T}\times \R$. From now on, we will use superscripts to denote corresponding orders, and subscripts, as we did before, to denote corresponding components.
 
We consider lifts of $W^0(\theta)$, $W^j(\theta)$, and $W^>(\theta, s)$, which will be functions from $\R\to \R^2$ or $\R^2\to \R^2$. We will not distinguish notations for the functions before or after lifts. According to the periodicity condition for $W$ in \eqref{pc}, the lifted functions satisfy the following periodicity conditions: 
\begin{align}\label{pc0}
&W^0(\theta+1)=W^0(\theta)+\left(\begin{smallmatrix}1\\0\end{smallmatrix}\right),\\
\label{pc1}
&W^j(\theta+1)=W^j(\theta),\\
\label{pc>}
&W^>(\theta+1,s)=W^>(\theta,s).
\end{align}

Based on the form of $W$ in \eqref{wform}, we can match coefficients of different powers of $s$ in the invariance equation \eqref{invv}. Thus, the invariance equation \eqref{invv} is equivalent to a sequence of equations. As we will see, the equations for $W^0$ and $W^1$ are special. The equation for $W^0$ is very nonlinear, the equation for $W^1$ is a relative eigenvector equation. The equations for $W^j$'s are all similar. The equation for $W^>$ is hard to study, it has 2 variables. As we will see later, for small enough $\ep$, $W^>$ is the only case where we need the cut-off.

\subsubsection{Invariance equation for zero order term}

Matching zero order terms of $s$ in \eqref{invv}, we obtain the equation for the unknowns $\omega$ and $W^0$:
\begin{equation}\label{inv0}
\omega \frac{d}{d{\theta}}W^0(\theta)-
\begin{pmatrix}
\omega_0\\ \lambda_0W^0_2(\theta)
\end{pmatrix}=\varepsilon\overline{Y}(W^0(\theta),\widetilde{W}^0(\theta; \omega), \varepsilon),
\end{equation}
where
\[
\widetilde{W}^0(\theta;\omega)=W^0\left(\theta-\omega\overline{r\circ K}(W^0(\theta))\right)
\]
is the function caused by delay.

\subsubsection{Invariance equation for first order term}

Equating the coefficients of $s^1$ in equation \eqref{invv}, we obtain the equation for the unknowns $\lambda$ and $W^1$:
\begin{equation}\label{inv1}
\omega\frac{d}{d\theta}W^1(\theta)+\lambda W^1(\theta)-\begin{pmatrix}
0\\ \lambda_0W^1_2(\theta)
\end{pmatrix}=\varepsilon \overline{Y}^1(\theta,\lambda, W^0, W^1,\varepsilon),
\end{equation}
where $\overline{Y}^1(\theta,\lambda,W^0, W^1,\varepsilon)$ is the coefficient of $s$ in $\overline{Y}$. $\overline{Y}^1(\theta,\lambda,W^0, W^1,\varepsilon)$ is linear in $W^1$. We will specify it later in \eqref{formy1}.

\subsubsection{Invariance equation for the j-th order term}

For $2\leq j\leq N-1$, matching the coefficients of $s^j$, the equation for the unknown $W^j$ is:
\begin{equation}\label{invj}
\omega\frac{d}{d\theta}W^j(\theta)+\lambda jW^j(\theta)-\begin{pmatrix}
0\\ \lambda_0W^j_2(\theta)
\end{pmatrix}=\varepsilon \overline{Y}^j(\theta,\lambda, W^0, W^j,\varepsilon)+R^j(\theta),
\end{equation}
where $\overline{Y}^j(\theta, W^0, W^j,\varepsilon)$ is the coefficient of $s^j$ in $\overline{Y}$. $\overline{Y}^j(\theta, W^0, W^j,\varepsilon)$ is linear in $W^j$. We will specify it later in \eqref{formyj}.
and $R^j$ is a function of $\theta$ which depends only on $W^0$, 
$W^1$,$\dotsc$, $W^{j-1}$.

Having $W^0, \dotsc, W^{N-1}$, we are ready to consider $W^>$. As we will see, the truncation number $N$ could be chosen as any integer larger than $1$ to obtain the main result of this paper.

\subsubsection{Invariance equation for higher order term}

For $W^>(\theta, s)$, it solves the equation:
\begin{equation}\label{invh}
(\omega\partial_{\theta}+s\lambda\partial_s)W^>(\theta,s)-\begin{pmatrix}
0\\\lambda_0W^>_2(\theta, s)
\end{pmatrix}=\varepsilon Y^>(W^>,\theta, s, \varepsilon)
\end{equation}
where $Y^>(W^>,\theta, s, \varepsilon)$ is the term of order at least $N$ in $s$ of $\overline{Y}$, which will be specified later in \eqref{formy>}.

\section{Some basic definitions and basic results on function spaces}\label{sc:basic}

In this section, we collect some standard results on the spaces of 
continuously differentiable functions that we will use.  
 
We will denote by $C^L(Y,X)$  the space of all functions from (an open subset of) a Banach space $Y$ to a Banach space $X$, with uniformly bounded continuous derivatives up to order $L$. We endow  $C^L(Y,X)$ with 
the norm
\[
\|f\|_{C^L}=\max_{0\leq j\leq L}\sup_{\xi\in Y}\|D^jf(\xi)\|_{Y^{\otimes j}\to X},
\]
so that $C^L(Y,X)$ is a Banach space. 

Note that we include in the  definition that the derivatives are uniformly bounded. 
This  is not the same as the Whitney topology on spaces of $L$ times differentiable 
functions in a $\sigma$-compact manifold \cite[p. 40]{GolubitskyG73}, which is a Fréchet 
topology. Even more general definitions appear in \cite{KrieglM97}.

We use $C^L_B(Y,X)$ to denote the closed subset of $C^L(Y,X)$ which consists of functions with $\|\cdot\|_{C^L}$ norm bounded by constant $B$.

We will also denote $C^{L+Lip}(Y,X)$ the space of $C^{L}$ functions with $L$-th derivative Lipschitz. We define
\[
\text{Lip}( D^Lf)=\sup_{\xi_1\neq\xi_2}\frac{\|D^Lf(\xi_1)-D^Lf(\xi_2)\|_{Y^{\otimes L}\to X}}{\|\xi_1-\xi_2\|_Y},
\]
and the norm $\|\cdot\|_{C^{L+Lip}(Y,X)}$ as the maximum of the $\|\cdot\|_{C^L}$ norm and $\text{Lip}(D^Lf)$. 

Define $C^{L+Lip}_B(Y,X)$ to be the closed subset of the space $C^{L+Lip}(Y,X)$ consisting of all functions of norm $\|\cdot\|_{C^{L+Lip}(Y,X)}$ bounded by the constant $B$. 

\subsection{Closure of $C^r$ balls in weak topology}

We quote proposition A2 in \cite{Lan}, as it will be used several times throughout this paper. It can be interpreted as $C^{L+Lip}_1(Y,X)$ is closed under pointwise weak topology on $X$. A related notion, Quasi-Banach space, was used in \cite{HartungTuri}.

\begin{lemma}[Lanford]\label{lem:Lan}
 Let $(u_n)_{n\in \N}$ be a sequence of functions on a Banach space $Y$ with values on a Banach space $X$. Assume that for all $n$, $y$
\[
\|D^ju_n(y)\|\leq 1\quad j=0, 1, 2, \dotsc, k,
\]
and that each $D^ku_n$  is Lipschitz with Lipschitz constant 1. Assume also that for each $y$, the sequence $(u_n(y))$ converges weakly(i.e., in the weak topology of $X$) to $u(y)$. Then,\\
(a) $u$ has a Lipschitz k-th derivative with Lipschitz constant 1;\\
(b) $D^ju_n(y)$ converges weakly to $D^ju(y)$ for all $y$ and $j=1,2,\dotsc,k.$
\end{lemma}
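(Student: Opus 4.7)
The statement is the weak-pointwise analogue of the classical Arzel\`a--Ascoli fact that a $C^{k,\mathrm{Lip}}$-bounded family is relatively compact in $C^k$. My plan is to argue directly, combining Taylor's theorem with Lipschitz remainder (the only place where $\mathrm{Lip}(D^ku_n)\le 1$ enters) with a Vandermonde/divided-difference inversion that writes each $D^ju_n(y)\,h^{\otimes j}$ as an explicit finite linear combination of sampled values of $u_n$. The Lipschitz hypothesis yields
\[
u_n(y+h)=\sum_{j=0}^{k}\tfrac{1}{j!}D^ju_n(y)\,h^{\otimes j}+\rho_n(y,h),\qquad \|\rho_n(y,h)\|\le \tfrac{\|h\|^{k+1}}{(k+1)!}.
\]
Evaluating at $y+t_i\epsilon h$ for $k{+}1$ distinct nodes $t_0,\dots,t_k\in\R$ and inverting the Vandermonde matrix $M_{ij}=t_i^{\,j}$ produces constants $c_{ji}$ depending only on $(t_i)$ and $k$ with
\[
D^ju_n(y)\,h^{\otimes j}=\frac{j!}{\epsilon^{\,j}}\sum_{i=0}^{k}c_{ji}\,u_n(y+t_i\epsilon h)+E_{j,n}(y,h,\epsilon),\quad \|E_{j,n}\|\le C_j\epsilon^{k+1-j}\|h\|^{k+1}.
\]
Since weak convergence passes through finite linear combinations, this identity is the vehicle by which weak pointwise convergence of $u_n$ transports to its derivatives.

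\textbf{Passage to the limit and identification.} Fix $\phi\in X^*$ and $y,h$. Applying $\phi$ to the Vandermonde identity for $u_n$ and $u_m$, the weak convergence $u_n(y+t_i\epsilon h)\rightharpoonup u(y+t_i\epsilon h)$ yields
\[
\limsup_{n,m\to\infty}\bigl|\phi\bigl(D^ju_n(y)h^{\otimes j}\bigr)-\phi\bigl(D^ju_m(y)h^{\otimes j}\bigr)\bigr|\le 2C_j\|\phi\|\,\epsilon^{k+1-j}\|h\|^{k+1}.
\]
Since $j\le k$, letting $\epsilon\to 0$ shows the scalar sequence is Cauchy and converges to some $L(\phi)$; the functional $\phi\mapsto L(\phi)$ lies in $X^{**}$ with norm $\le\|h\|^j$. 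The Vandermonde identity further shows that $L$ is $X^{**}$-norm approximated, to within $C_j\epsilon^{k+1-j}\|h\|^{k+1}$, by the element $\iota\bigl(\tfrac{j!}{\epsilon^j}\sum_i c_{ji}u(y+t_i\epsilon h)\bigr)\in\iota(X)\subset X^{**}$, where $\iota:X\hookrightarrow X^{**}$ is the canonical isometric embedding. Letting $\epsilon\to 0$ forces $L$ into the closed subspace $\iota(X)$, so $L=\iota\bigl(A_j(y)h^{\otimes j}\bigr)$ for a unique $A_j(y)h^{\otimes j}\in X$. Polarization of the resulting degree-$j$ homogeneous polynomial in $h$ makes $A_j(y)$ a bounded symmetric $j$-linear map of norm $\le 1$. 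To identify $A_j(y)$ with $D^ju(y)$ I argue inductively on $j$: passing the order-$j$ Taylor expansion of $D^{j-1}u_n$ around $y$ to the weak limit (using the multilinear-norm bound $\|D^{j-1}u_n(y+h)-D^{j-1}u_n(y)-D^ju_n(y)h\|\le \tfrac{1}{2}\|h\|^2$ obtained either from $\|D^{j+1}u_n\|\le 1$ or, when $j=k$, from $\mathrm{Lip}(D^ku_n)\le 1$) and invoking weak lower semicontinuity yields $\|A_{j-1}(y+h)-A_{j-1}(y)-A_j(y)h\|\le \tfrac{1}{2}\|h\|^2$, so $A_{j-1}$ is Fr\'echet differentiable with derivative $A_j$; iterating from $j=1$ gives $D^ju=A_j$ for all $j\le k$, which is (b).

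\textbf{Lipschitz bound and main obstacle.} Part (a) follows from (b) by weak lower semicontinuity: for each fixed $h^{\otimes k}$ one has $D^ku_n(y_1)h^{\otimes k}-D^ku_n(y_2)h^{\otimes k}\rightharpoonup D^ku(y_1)h^{\otimes k}-D^ku(y_2)h^{\otimes k}$, hence
\[
\|D^ku(y_1)-D^ku(y_2)\|\le\liminf_n\|D^ku_n(y_1)-D^ku_n(y_2)\|\le\|y_1-y_2\|.
\]
The delicate step in the plan is the identification in the second paragraph, where one must ensure that the weak-$\ast$ limit $L\in X^{**}$ actually lies in $\iota(X)$ rather than merely in $X^{**}$. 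The Vandermonde representation is essential because it exhibits $L$, uniformly in $\phi$, as an explicit $X$-linear combination of values of $u$ plus an $X^{**}$-norm remainder of order $\epsilon^{k+1-j}$; letting $\epsilon\to 0$ forces $L$ into the closed (isometric) image $\iota(X)$. This uniformity in $\phi$ is also what upgrades the resulting estimate from a G\^ateaux- to a genuine Fr\'echet-differentiability statement, and it is precisely the Lipschitz-$D^k$ hypothesis that supplies the remainder bound.
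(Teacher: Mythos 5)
The paper does not prove this lemma at all: it quotes it as Proposition~A2 of Lanford's article, remarks that in the finite-dimensional case (the only case the paper actually uses) it is a routine consequence of Arzel\`a--Ascoli, and refers to Lanford for the infinite-dimensional argument. Your submission, by contrast, is a complete, self-contained proof valid for arbitrary Banach spaces $X$, $Y$, and it is correct. The argument is essentially the right one for the general case: Arzel\`a--Ascoli is unavailable without local compactness, so one has to manufacture the limits of the derivatives directly. Your Vandermonde/divided-difference identity
\[
D^ju_n(y)\,h^{\otimes j}=\frac{j!}{\epsilon^{\,j}}\sum_{i=0}^{k}c_{ji}\,u_n(y+t_i\epsilon h)+E_{j,n}(y,h,\epsilon),\qquad \|E_{j,n}\|\le C_j\epsilon^{k+1-j}\|h\|^{k+1},
\]
does exactly this: it exhibits $D^ju_n(y)h^{\otimes j}$ as a finite linear combination of sampled values of $u_n$ (to which the weak-convergence hypothesis applies verbatim) plus a remainder that is small \emph{uniformly in $n$}, and the remainder bound is precisely where $\mathrm{Lip}(D^ku_n)\le 1$ enters via the Taylor estimate $\|\rho_n(y,h)\|\le\|h\|^{k+1}/(k+1)!$. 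The subtle point you flag --- that the resulting limit $L\in X^{**}$ might a priori not lie in the canonical image $\iota(X)$ --- is handled correctly: $\iota(X)$ is norm-closed in $X^{**}$ (as $X$ is complete and $\iota$ is isometric), and the uniform-in-$\phi$ estimate $\|L-\iota(v_\epsilon)\|_{X^{**}}\le C_j\epsilon^{k+1-j}\|h\|^{k+1}$ with $v_\epsilon=\tfrac{j!}{\epsilon^j}\sum_i c_{ji}u(y+t_i\epsilon h)\in X$ forces $L\in\iota(X)$ as $\epsilon\to 0$. The inductive identification $A_j=D^ju$ via one-step Taylor bounds and weak lower semicontinuity of the operator norm, and the Lipschitz conclusion (a) from (b), are both sound. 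Two cosmetic remarks: polarization should be invoked explicitly by noting that $D^ju_n(y)(h_1,\dots,h_j)$ is a fixed finite linear combination of terms $D^ju_n(y)(\pm h_1\pm\dots\pm h_j)^{\otimes j}$, so the scalar limits exist for arbitrary arguments and define a bounded symmetric $j$-linear map; and the weak lower semicontinuity $\|T\|\le\liminf_n\|T_n\|$ under pointwise weak convergence $T_n v\rightharpoonup Tv$ (used for part (a) and the norm bound $\|A_j\|\le 1$) is elementary but deserves a line since it is applied to multilinear maps rather than vectors. Compared with the route the paper points to, your argument buys generality: the Arzel\`a--Ascoli shortcut gives strong subsequential limits only in finite dimensions, while the divided-difference scheme produces the weak limits of all derivatives constructively, with no compactness, and with quantitative control of the remainder.
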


Note that the assumption of weak convergence of $(u_n(y))$, and part (b) in 
the conclusion implies that $\|D^j u(y)\|\leq 1$ for all $y$ and $j=0,1,2,\dotsc,k.$

As mentioned in \cite{Lan}, if $X$ and $Y$ are finite dimensional, the above lemma is just an application of Arzela-Ascoli Theorem. This is actually the only case we need. For the proof of above lemma in the general case, we refer to \cite{Lan}.

\subsection{Faà di Bruno formula}

We also quote Faà di Bruno formula, which deals with the derivatives of composition of two functions.

\begin{lemma}\label{faa}
Let $g(x)$ be defined on a neighborhood of $x^0$ in a Banach space $E$, and have derivatives up to order $n$ at $x^0$. Let $f(y)$ be defined on a neighborhood of $y^0=g(x^0)$ in a Banach space $F$, and have derivatives up to order $n$ at $y^0$. Then, the nth derivative of the composition $h(x)=f[g(x)]$ at $x^0$ is given by the formula
\begin{equation}
h_n=\sum^n_{k=1}f_k\sum_{p(n,k)}n!\prod^n_{i=1}\frac{g_i^{\lambda_i}}{(\lambda_i!)(i!)^{\lambda_i}}.
\end{equation}
In the above expression, we set
\[
h_n=\frac{d^n}{dx^n}h(x^0),\quad f_k=\frac{d^k}{dy^k}f(y^0),\quad g_i=\frac{d^i}{dx^i}g(x^0),
\]
and
\[
p(n,k)=\left\{(\lambda_1,\dotsc,\lambda_n):\lambda_i\in \N, 
\sum^n_{i=1}\lambda_i=k, \sum^n_{i=1}i\lambda_i=n\right\}.
\]
\end{lemma}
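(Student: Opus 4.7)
The plan is to prove the formula by comparing Taylor expansions of the two sides, reducing the problem to a bookkeeping identity that the multinomial theorem delivers at one stroke. Fix a direction $h$ in $E$ and define the real-variable function $\varphi(t) = f(g(x^0 + th))$, viewing both expansions as finite Taylor polynomials with $o(t^n)$ remainders (which is exactly what the hypotheses on $f$ and $g$ guarantee). All the multilinear forms $D^i g(x^0)$ and $D^k f(y^0)$ are then evaluated diagonally on $h$, so that the formula can be read as an identity between the scalar Taylor coefficients of $\varphi$ at $0$; because the multilinear forms are symmetric and $h$ is arbitrary, recovering the identity for one diagonal direction is sufficient by polarization.

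First I would expand $g$ at $x^0$ to order $n$,
\[
g(x^0+th) \;=\; y^0 \;+\; \Delta(t), \qquad \Delta(t) \;=\; \sum_{i=1}^{n}\frac{t^i}{i!}\,g_i[h^{(i)}] + o(t^n),
\]
and then expand $f$ at $y^0$ to order $n$, which is legitimate because $\Delta(t) = O(t)$ so that $o(\|\Delta(t)\|^n) = o(t^n)$:
\[
\varphi(t) \;=\; f(y^0) \;+\; \sum_{k=1}^{n}\frac{1}{k!}\, f_k\bigl[\Delta(t)^{(k)}\bigr] \;+\; o(t^n).
\]
Next I would apply the multinomial theorem to the polynomial $\Delta(t)$ inside each $k$-th symmetric power, grouping terms by the multi-index $\lambda=(\lambda_1,\dots,\lambda_n)$ that counts how many factors of each $t^i g_i[h^{(i)}]/i!$ are used. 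The multinomial coefficient $k!/\prod_i \lambda_i!$ appears, together with the monomial $t^{\sum i\lambda_i}\prod_i (g_i[h^{(i)}]/i!)^{\lambda_i}$.

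Finally I would extract the coefficient of $t^n$ from $\varphi(t)$. Only tuples $\lambda$ with $\sum_i \lambda_i = k$ and $\sum_i i\lambda_i = n$, i.e. those in $p(n,k)$, contribute. After the factors of $k!$ from the multinomial coefficient cancel the $1/k!$ from the Taylor expansion of $f$, the coefficient of $t^n$ in $\varphi(t)$ is
\[
\sum_{k=1}^{n} f_k\!\!\sum_{p(n,k)} \frac{1}{\prod_i \lambda_i!}\prod_{i=1}^{n}\frac{g_i^{\lambda_i}}{(i!)^{\lambda_i}},
\]
and multiplying by $n!$ (to pass from the Taylor coefficient at $t=0$ to $\varphi^{(n)}(0) = D^n h(x^0)[h,\dots,h]$) yields the claimed identity. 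The main obstacle is purely notational: one must fix a consistent reading of the formal product $g_i^{\lambda_i}$ as the tensor product of symmetric multilinear forms, all ultimately evaluated on the same vector $h$; once this convention is in place (or, equivalently, one argues first in the scalar case and invokes polarization), the argument is a clean matching of coefficients with no analytic subtleties beyond the definition of Taylor expansion. An alternative route is induction on $n$ using the chain rule $h'(x) = f'(g(x))\,g'(x)$ and the product/chain rule for higher derivatives; this works but requires a combinatorial lemma (essentially identifying the recursion $p(n,k) \leftrightarrow p(n-1,k) \cup p(n-1,k-1)$ on partitions), whereas the Taylor-expansion route delivers the combinatorics directly from the multinomial theorem.
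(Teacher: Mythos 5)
Your proof is correct, and it takes a different route from the one the paper invokes: the paper merely sketches ``Chain Rule and induction'' and refers to \cite{AbrahamR67}, whereas you use a Taylor-expansion argument whose combinatorics are delivered by the multinomial theorem. Both are standard proofs of Fa\`a di Bruno. The inductive proof proceeds by differentiating the chain rule $h'(x)=f'(g(x))\,g'(x)$ once more and tracking how the index sets $p(n,k)$ evolve, which requires a partition recursion; the Taylor route sidesteps that recursion but requires slightly more care about what the remainders mean and about how to interpret the formal products $g_i^{\lambda_i}$ of multilinear maps in the Banach-space setting. You handle both of these cleanly: the Peano-form remainder at a single point is exactly what the hypotheses (``derivatives up to order $n$ at $x^0$, $y^0$'') provide, and the substitution $\Delta(t)=O(t)$ correctly upgrades $o(\|\Delta\|^n)$ to $o(t^n)$; and restricting to the diagonal direction $h$, then appealing to polarization of symmetric $n$-linear forms, is precisely the right way to give meaning to the product notation in the abstract setting. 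One small point worth stating explicitly rather than leaving implicit: $D^n h(x^0)$ is automatically symmetric (as a mixed $n$-th derivative), so polarization does apply to recover the full multilinear identity from the scalar one along diagonals. With that spelled out, the argument is complete.
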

This can be proved by the Chain Rule and induction. See \cite{AbrahamR67} for a proof.

\subsection{Interpolation}
The interpolation inequalities will also be used many times. One can refer to \cite{Had98, Kol, LO99} for some related work. We quote the following result from \cite{LO99}:

\begin{lemma}\label{lem:inter}
Let $U$ be a convex and bounded open subset of a Banach space $ E$, $F$ be a Banach space. Let $r$, $s$, $t$ be positive numbers, $0\leq r<s<t$, and $\mu=\frac{t-s}{t-r}$. There is a constant $M_{r,t}$, such that if $f\in C^t(U,F)$, then 
\[
\|f\|_{C^s}\leq M_{r,t}\|f\|_{C^r}^{\mu}\|f\|_{C^t}^{1-\mu}.
\]
\end{lemma}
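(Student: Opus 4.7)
The plan is to reduce the inequality to the classical one-dimensional Landau--Kolmogorov inequality by restricting $f$ to line segments inside $U$, and then to iterate. For the base case $r=0$, $s=1$, $t=2$: given $g \in C^2([-a,a], F)$, Taylor's theorem with integral remainder gives
\[
g(\pm a) = g(0) \pm a\, g'(0) + \int_0^{\pm a}(\pm a - \tau)\, g''(\tau)\, d\tau,
\]
so $\|g'(0)\|_F \leq \|g\|_{C^0}/a + (a/2)\,\|g''\|_{C^0}$. Optimizing freely over $a > 0$ yields $\|g'(0)\|_F \leq \sqrt{2}\,\|g\|_{C^0}^{1/2}\,\|g''\|_{C^0}^{1/2}$. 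Applying this to $g(t) = f(x + tv)$ for $x \in U$ and unit $v \in E$, and taking the supremum over $x$ and $v$, produces $\|Df\|_{C^0} \leq C\,\|f\|_{C^0}^{1/2}\,\|D^2 f\|_{C^0}^{1/2}$, which is the lemma in this case.

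For general integer triples $r < s < t$, I would iterate. First handle $r=0$ by downward induction on $t-s$, applying the base case to successive derivatives $D^k f$ viewed as $C^2$ functions into the Banach space of $k$-multilinear maps on $E^{\otimes k}$; elementary arithmetic forces the exponent $\mu = (t-s)/t$ at each intermediate order, and the constants from each step multiply to a finite $M_{0,t}$. The case $r > 0$ then follows by applying the $r=0$ result to $D^r f$ as a $C^{t-r}$ function, giving the exponent $\mu = (t-s)/(t-r)$ as stated. For endpoints involving the Lipschitz refinement (the $C^{L+\mathrm{Lip}}$ scale used elsewhere in the paper), I would express the Lipschitz seminorm of $D^L f$ as a supremum of one-sided difference quotients and use Taylor's theorem to trade these against $\|D^{L+1} f\|_{C^0}$ up to bounded constants, then feed the result into the integer inequalities.

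The main obstacle is that the unrestricted optimization over $a$ in the base case is legitimate only when $f$ is defined on all of $\R$; on a bounded convex open $U$, the chord through $x$ in direction $v$ can be arbitrarily short when $x$ lies near $\partial U$ and $v$ points outward, so the minimizer of $a \mapsto \|g\|_{C^0}/a + (a/2)\|g''\|_{C^0}$ may be inadmissible. This is precisely where the assumptions on $U$ enter: fix an interior ball $B(x_0, \rho) \subset U$; by convexity, for any $x \in U$ and unit $v$ one can find a segment in $U$, passing through a nearby point $\tilde{x}$ and with direction $v$, whose length is bounded below by some $a_0 = a_0(U, \rho) > 0$. Restricting the optimization to $a \in (0, a_0]$ either attains the unconstrained optimum or saturates at $a_0$; in either case one obtains a bound of the stated interpolation form, with a constant that absorbs the diameter of $U$ and $a_0$ into the final $M_{r,t}$.
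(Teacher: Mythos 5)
The paper does not prove this lemma; it is quoted directly from \cite{LO99} (de la Llave--Obaya), so there is no in-paper proof to compare against. Your overall strategy --- one-dimensional Landau--Kolmogorov on chords via Taylor's theorem, followed by log-convexity of $j \mapsto \|D^j f\|_{C^0}$ to reach general integer triples, and a shift $f \mapsto D^r f$ to handle $r>0$ --- is the standard route and essentially the one taken in that reference, so the plan is sound and the base-case arithmetic (optimal $a^\ast = \sqrt{2\|g\|_{C^0}/\|g''\|_{C^0}}$, constant $\sqrt{2}$) is correct.

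The gap is in the boundary repair at the end. You correctly identify that the chord through $x$ in direction $v$ can be arbitrarily short when $x$ is near $\partial U$ with $v$ nearly tangent to $\partial U$, and you propose passing to a nearby $\tilde x$ admitting a chord of length at least $a_0$. But the Taylor argument along the shifted chord only bounds $Df(\tilde x)v$, not $Df(x)v$, and you then carry out the optimization as though you had estimated the latter. To recover $Df(x)v$ one must also pay
\[
\|Df(x)v - Df(\tilde x)v\| \le \|D^2 f\|_{C^0}\,|x-\tilde x|,
\]
which is missing from your estimate. With a fixed geometric shift $|x-\tilde x|=\delta_0$ and a fixed cap $a_0$, this extra term contributes $\delta_0\|D^2 f\|_{C^0}$, which is \emph{not} dominated by $\|f\|_{C^0}^{1/2}\|f\|_{C^2}^{1/2}$, so the interpolation does not follow from ``restricting the optimization to $a\in(0,a_0]$.'' The repair is to let both the chord length and the shift distance scale with the free parameter $a$: with $B(x_0,\rho)\subset U$ and $d=\operatorname{diam} U$, set $\tilde x_a = x + (a/d)(x_0-x)$ for $a\in(0,d]$; by convexity $B(\tilde x_a, a\rho/d)\subset U$, so the chord through $\tilde x_a$ in direction $v$ has length at least $2a\rho/d$, while $|x-\tilde x_a|\le a$. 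Taylor along that chord plus the mean-value bound above then gives, for a constant $C(\rho,d)$ and all $a\in(0,d]$,
\[
\|Df(x)v\| \le C\left(\frac{\|f\|_{C^0}}{a} + a\,\|D^2 f\|_{C^0}\right),
\]
and the dichotomy (unconstrained optimum admissible versus capped at $a=d$) yields the stated interpolation, the capped case being absorbed because $\|f\|_{C^0}\le\|f\|_{C^0}^{1/2}\|f\|_{C^2}^{1/2}$. Your Lipschitz endpoint and the iteration to general $(r,s,t)$ then go through as sketched once the base case is fixed in this way.
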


\section{Main results} \label{sc:mainr}
\subsection{Results for prepared equations}\label{ssc:prep}

Under the assumption that the map $\overline{Y}:({\T}\times\R)^2\times \R_+\to\R^2$ has bounded derivatives up to any order, $\overline{r\circ K}:{\T}\times\R\to [0,h]$ has bounded derivatives up to any order, we have:
\begin{theorem}[Zero Order]\label{thm:zero}
For any given integer $L>0$, there is $\ep_0>0$ such that when $0\leq\varepsilon< \ep_0$, there exist an $\omega>0$ and an $L$ times differentiable map $W^0: \mathbb{T} \to \mathbb{T}\times \R$, with $L$-th derivative Lipschitz, which solve equation \eqref{inv0}.

Moreover, for initial guess $\omega^0$, and $W^{0,0}(\theta)$ satisfying the periodicity condition \eqref{pc0}. If they satisfy the invariance equation \eqref{inv0} with error $E^0(\theta)$, then there exist unique $\omega$, $W^0(\theta)$(satisfying the periodic condition \eqref{pc0}) closed by solving the same equation exactly, with 
\begin{align}\label{aprw0}
\|W^{0,0}-W^0\|_{C^l}\leq& C\|E^0\|^{1-\frac{l}{L}}_{C^0},\quad0\leq l<L\\
\label{aprom}
|\omega^0-\omega|\leq& C\|E^0\|_{C^0},
\end{align}
for some constant $C$, where $C$ may depend on $\varepsilon$, $\omega_0$, $\lambda_0$, $l$, $L$, and prior bound for $\|W^{0,0}\|_{L+Lip}$. In fact, $W^0$ has derivatives up to any order. 
\end{theorem}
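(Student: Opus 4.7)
The crucial structural fact is that in \eqref{inv0} the unknown $W^0$ enters the right-hand side only through the pointwise evaluations $W^0(\theta)$ and $W^0(\theta-\omega\overline{r\circ K}(W^0(\theta)))$, never through derivatives. Hence, for each fixed $\omega$, \eqref{inv0} is a linear inhomogeneous constant-coefficient system on $\T$ whose source depends on $W^0$ only by composition with smooth, bounded-derivative functions. I plan to solve it by Banach fixed point in a small $C^{L+\Lip}$ ball around the approximate solution $W^{0,0}$, with contraction measured in the $C^0$ norm, and then to recover the $C^l$ closeness via the interpolation inequality (Lemma~\ref{lem:inter}).

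\textbf{Inversion of the linear part and the choice of $\omega$.} Writing $W^0_1(\theta)=\theta+v_1(\theta)$ with $v_1$ periodic, the first component of \eqref{inv0} reads $\omega(1+v_1'(\theta))-\omega_0=\varepsilon\overline{Y}_1$, and periodicity of $v_1$ forces the scalar relation
\[
\omega=\omega_0+\varepsilon\int_0^1\overline{Y}_1\bigl(W^0(\theta),\widetilde{W}^0(\theta;\omega),\varepsilon\bigr)\,d\theta,
\]
a contraction in $\omega$ of factor $O(\varepsilon)$ which determines $\omega$ from $W^0$ and yields \eqref{aprom} at once. With $\omega$ fixed, $v_1$ is recovered by quadrature, modulo the phase-shift kernel of Remark~\ref{rmk:nonuni}, which I remove by the normalization $v_1(0)=0$. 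The second component $\omega(W^0_2)'-\lambda_0 W^0_2=\varepsilon\overline{Y}_2$ is a scalar linear ODE with constant coefficients; since $\lambda_0\ne 0$, the integrating factor $e^{-\lambda_0\theta/\omega}$ together with the invertibility of $1-e^{-\lambda_0/\omega}$ produces a right inverse bounded on $C^l(\T)$ for all $l\le L$. Denote the resulting right inverse of the full linear operator by $\mathcal{L}_\omega^{-1}$.

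\textbf{Contraction, interpolation, bootstrap, and main obstacle.} Set $\mathcal{F}(W^0):=\mathcal{L}_{\omega(W^0)}^{-1}\bigl((\omega_0,0)+\varepsilon\overline{Y}(W^0,\widetilde{W}^0,\varepsilon)\bigr)$. For $\varepsilon$ small, $\mathcal{F}$ is a contraction in $C^0$ on a $C^{L+\Lip}$ ball of fixed radius around $W^{0,0}$; the limit of the Picard iterates stays in the same ball by Lemma~\ref{lem:Lan} applied pointwise. Centering the argument at $W^{0,0}$ and treating the defect $E^0$ as a forcing yields $\|W^0-W^{0,0}\|_{C^0}\le C\|E^0\|_{C^0}$, and interpolating this with the uniform $C^L$ bound produces \eqref{aprw0}. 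Smoothness beyond $C^{L+\Lip}$ follows by rerunning the entire argument with any larger $L$, since $\overline{Y}$ and $\overline{r\circ K}$ have bounded derivatives of every order. The main obstacle is the $C^{L+\Lip}$ self-map property: expanding $D^L$ of the composition $W^0(\theta-\omega\overline{r\circ K}(W^0(\theta)))$ via Faà di Bruno (Lemma~\ref{faa}) produces terms involving derivatives of $W^0$ up to order $L$ evaluated at the shifted argument, together with the Lipschitz modulus of $D^L W^0$. These are all controlled by $\|W^0\|_{C^{L+\Lip}}$ multiplied by factors that involve only derivatives of $\overline{r\circ K}$, which are uniformly bounded. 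The prefactor $\varepsilon$ then gives an estimate of the form $\|\mathcal{F}(W^0)\|_{C^{L+\Lip}}\le C+O(\varepsilon)\|W^0\|_{C^{L+\Lip}}$, closing the invariance of the ball.
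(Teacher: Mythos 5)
Your proposal follows essentially the same route as the paper: the same decomposition of the unknown into $\omega$ (fixed by the solvability condition in the first component), $W^0_1$ (quadrature with phase normalization $W^0_1(0)=0$), and $W^0_2$ (inversion of the constant-coefficient operator $\omega\frac{d}{d\theta}-\lambda_0$ on periodic functions); the same $C^0$ contraction on a $C^{L+\Lip}$ ball; the same appeal to Lemma~\ref{lem:Lan} and interpolation. Your expression for the right inverse of the second component via the integrating factor and $(1-e^{-\lambda_0/\omega})^{-1}$ is an equivalent reformulation of the paper's integral $\varepsilon\int_0^\infty e^{\lambda_0 t}\overline{Y}_2(\cdot,\theta-at,\cdot)\,dt$ over the characteristics; both are valid.

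Two points deserve correction. First, a minor imprecision in the propagated bounds: when Faà di Bruno is applied to $W^0\bigl(\theta-\omega\,\overline{r\circ K}(W^0(\theta))\bigr)$, the ``inner function'' $\theta\mapsto\theta-\omega\,\overline{r\circ K}(W^0(\theta))$ itself has derivatives that are polynomial in the derivatives of $W^0$, not just in those of $\overline{r\circ K}$. So the resulting estimate is $\|\Gamma^0(\omega,W^0)\|_{C^{L+\Lip}}\le C+O(\varepsilon)\,P\bigl(\|W^0\|_{C^{L+\Lip}}\bigr)$ for some polynomial $P$, not the linear bound you wrote. This does not break the ball-invariance argument (one still picks $B^0>C$ and then $\varepsilon$ small depending on $B^0$), but the estimate as stated is not what Faà di Bruno yields.

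Second, and this is a genuine gap: the final claim that $W^0$ has derivatives of every order cannot be obtained by ``rerunning the entire argument with any larger $L$.'' The threshold $\varepsilon_0$ produced by the fixed-point construction depends on $L$ (the smallness condition has to beat $L$-dependent Faà di Bruno constants and the radius $B^0$), so rerunning with $L'>L$ only gives $C^{L'+\Lip}$ regularity for $\varepsilon<\varepsilon_0(L')$, which may be a strictly smaller range than $\varepsilon<\varepsilon_0(L)$. The theorem asserts $C^\infty$ for all $\varepsilon<\varepsilon_0(L)$. The paper gets this by a bootstrap on the solved equation itself: since \eqref{inv0} expresses $\frac{d}{d\theta}W^0$ in terms of $W^0$ composed with smooth maps, $W^0\in C^k$ implies the right-hand side is $C^k$, hence $W^0\in C^{k+1}$; iterating gives $C^\infty$. (Equivalently, in the non-extended regime, $K\circ W^0(\theta+\omega t)$ solves the SDDE, the right-hand side inherits the regularity of $W^0$, and analyticity of $K$ lets one gain a derivative.) You need this bootstrap in place of the rerunning argument.
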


Moreover,
\begin{theorem}[All Orders]\label{thm:all}
For any given integers $N\geq 2$, and $L\geq 2+N$, there is $\ep_0>0$ such that when $0\leq\varepsilon < \ep_0$, there exist $\omega> 0$, $\lambda<0$, and $W: \mathbb{T}\times\mathbb{R}\to\mathbb{T}\times\mathbb{R}$ of the form 
\begin{equation}\label{wform1}
W(\theta,s)=\sum_{j=0}^{N-1}W^j(\theta)s^j+W^>(\theta,s)
\end{equation}
which solve the equation \eqref{invv} in a neighborhood of $s=0$.

Where $W^0: \mathbb{T} \to\mathbb{T}\times\mathbb{R}$ is $L$ times differentiable with Lipschitz $L$-th derivative. For $1\leq j\leq N-1$, $W^j:  \mathbb{T} \to \mathbb{T}\times\mathbb{R}$ is $(L-1)$ times differentiable with Lipschitz $(L-1)$-th derivative, and $W^>$ is of order at least N in $s$ and is jointly $(L-2-N)$ times differentiable in $\theta$ and $s$, with $(L-2-N)$-th derivative Lipschitz.

Moreover, if $\omega^0$, $W^{0,0}(\theta)$, $\lambda^0$, $W^{1,0}(\theta)$, $W^{j,0}(\theta)$, and $W^{>,0}(\theta,s)$ satisfy the invariance equations \eqref{inv0}, \eqref{inv1}, \eqref{invj}, and \eqref{invh}, with errors $E^0(\theta)$, $E^1(\theta)$, $E^j(\theta)$, and $E^>(\theta,s)$, respectively, then there are $\omega$, $W^{0}(\theta)$, $\lambda$, $W^{1}(\theta)$, $W^{1}(\theta)$, and $W^{>}(\theta,s)$ which solve equations \eqref{inv0}, \eqref{inv1}, \eqref{invj}, and \eqref{invh}. Therefore, equation \eqref{invv} is solved by $\omega$, $\lambda$, and $W(\theta, s)$ of above form \eqref{wform1}. For $0\leq l\leq L-2-N$, we have
\begin{equation}\label{aprw}
\begin{split}
\|W(\theta, s)&-\sum_{j=0}^{N-1}W^{j,0}(\theta)s^j-W^{>,0}(\theta, s)\|_{C^l}\\
&\leq C(\sum_{j=0}^{N-1}\|E^j\|_{C^0}|s|^j+\|E^>\|_{0,N}|s|^N)^{1-\frac{l}{(L-2-N)}}, 
\end{split}
\end{equation}
$$|\omega-\omega^0|\leq C(\|E^0\|_{C^0}),$$
\begin{equation}\label{aprlam}
|\lambda-\lambda^0|\leq C(\|E^1\|_{C^0}),
\end{equation}
for some constant $C$ depending on $\varepsilon$, $\omega_0$, $\lambda_0$, $N$, $l$, $L$, prior bounds for $\|W^{0,0}\|_{L+Lip}$, $\|W^{j,0}\|_{L-1+Lip}$, $j=1,\dotsc, N-1$, and derivatives of $W^{>,0}$.
\end{theorem}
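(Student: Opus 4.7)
I would build the solution of \eqref{invv} term by term in powers of $s$, tackling the four coupled equations \eqref{inv0}, \eqref{inv1}, \eqref{invj} (for $2\leq j\leq N-1$) and \eqref{invh} in that order. The zeroth-order equation is already settled by Theorem~\ref{thm:zero}, which produces $(\omega,W^0)\in\R\times C^{L+\mathrm{Lip}}$ together with $|\omega-\omega^0|\leq C\|E^0\|_{C^0}$ and \eqref{aprw0}. Once $(\omega,W^0)$ is fixed, each remaining equation has the form ``invertible linear operator on the unknown $=$ $\varepsilon$-small nonlinearity $+$ source depending only on previously constructed quantities'', so existence reduces to four contraction arguments and the a-posteriori part follows by running the same contractions starting from the residual of the approximate data.

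\textbf{Determination of $\lambda$.} Equation \eqref{inv1} is the only one that genuinely fixes $\lambda$. Write it as $\LL_\lambda W^1=\varepsilon\overline Y^1$ with $\LL_\lambda=\omega\partial_\theta+\mathrm{diag}(\lambda,\lambda-\lambda_0)$. At $(\omega,\lambda)=(\omega_0,\lambda_0)$, $\LL_{\lambda_0}$ is Fredholm of index zero on $C^k$-periodic functions: its kernel is spanned by $(0,1)$ and its range is cut out by the single condition that the mean of the second component of the source vanishes. I would impose a normalization on $W^1$ (e.g.\ fix the mean of $W^1_2$), perform a Lyapunov--Schmidt reduction that solves the range part for $W^1$ and the kernel part for $\lambda$, and close the coupled $(\lambda,W^1)$-system by a contraction for small $\varepsilon$. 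This yields existence, the bound \eqref{aprlam}, and regularity $W^1\in C^{L-1+\mathrm{Lip}}$; the one-derivative loss comes from differentiating the delay shift inside $\widetilde W^1$.

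\textbf{Higher polynomial orders.} For $2\leq j\leq N-1$ equation \eqref{invj} reads $\LL_{j\lambda}W^j=\varepsilon\overline Y^j+R^j$ with $\LL_{j\lambda}=\omega\partial_\theta+\mathrm{diag}(j\lambda,\,j\lambda-\lambda_0)$. The spectra $\{j\lambda+i\omega_0 k\}_{k\in\Z}$ and $\{(j-1)\lambda_0+i\omega_0 k\}_{k\in\Z}$ are bounded away from zero for $j\geq 2$, so $\LL_{j\lambda}$ is boundedly invertible on the $C^{k+\mathrm{Lip}}$ scale and $W^j$ is obtained by a direct contraction in $C^{L-1+\mathrm{Lip}}$. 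The source $R^j$ is a polynomial combination of the derivatives of $W^0,\dots,W^{j-1}$ computed via Faà di Bruno (Lemma~\ref{faa}), so the propagation of the errors $E^1,\dots,E^{j-1}$ into an error on $W^j$ is routine.

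\textbf{The tail equation, which I expect to be the main obstacle.} Equation \eqref{invh} is a transport-type PDE in both variables, nonlinear through the delay composition inside $\widetilde W$. I would invert the transport operator $\omega\partial_\theta+s\lambda\partial_s-\mathrm{diag}(0,\lambda_0)$ by integrating along its characteristics $(\theta_0+\omega t,\,s_0 e^{\lambda t})$. Since $\lambda<0$ the characteristics contract to $s=0$ as $t\to\infty$, and the natural formula is
\[
W^>_2(\theta_0,s_0)=-\int_0^\infty e^{-\lambda_0\tau}\,\varepsilon Y^>_2\bigl(W^>,\,\theta_0+\omega\tau,\,s_0 e^{\lambda\tau},\,\varepsilon\bigr)\,d\tau,
\]
with a simpler analogue for $W^>_1$. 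Because $W^>$ (hence $Y^>$) vanishes to order $N$ at $s=0$, the integrand decays like $e^{(N\lambda-\lambda_0)\tau}$, and since $\lambda\approx\lambda_0<0$ and $N\geq 2$ we have $N\lambda-\lambda_0<0$ --- this is why $N\geq 2$ is imposed. For $\varepsilon$ small the resulting map is a $C^0$-contraction on the ball of tails in $C^{L-2-N+\mathrm{Lip}}(\T\times\R)$ vanishing to order $N$; invariance of the high-norm ball uses Faà di Bruno on the composition inside $\overline Y$, the uniform derivative bounds provided by the cut-offs on $\overline Y$ and $\overline{r\circ K}$, and Lemma~\ref{lem:Lan} for weak closure. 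The regularity gap $L-2-N$ is exactly the number of derivatives consumed by differentiating both the delay argument $\theta-\omega\overline{r\circ K}\circ W$ and the multiplier $s e^{-\lambda\overline{r\circ K}\circ W}$ through which $W^>$ is composed. Finally, the a-posteriori bound \eqref{aprw} is obtained by running the same contraction starting from the approximate data, so the exact fixed point is $O(\sum_j\|E^j\|_{C^0}|s|^j+\|E^>\|_{0,N}|s|^N)$ from the approximation in the low norm, and then applying Lemma~\ref{lem:inter} with $r=0$, $t=L-2-N$, which gives exactly the exponent $1-l/(L-2-N)$ appearing in \eqref{aprw}.
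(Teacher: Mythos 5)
Your proposal follows essentially the same route as the paper: solve \eqref{inv0} via Theorem~\ref{thm:zero}, then treat $(\lambda,W^1)$, each $W^j$, and finally the tail $W^>$ by fixed-point arguments, integrating along characteristics for the tail, propagating $C^{k+\mathrm{Lip}}$ bounds via Faà di Bruno and Lemma~\ref{lem:Lan}, and deducing the a-posteriori estimates from the contraction constant plus interpolation. Your Lyapunov--Schmidt description of the $(\lambda,W^1)$ step is a repackaging of the paper's operator $\Gamma^1$ in \eqref{op1}, where the ``kernel projection'' is exactly $\Gamma^1_1(b,F)=\lambda_0+\varepsilon\int_0^1\overline{Y}^1_2$ and the normalization $\int_0^1 W^1_2=1$ fixes the free constant via $C(b,F)$; and your observation that $N\lambda-\lambda_0<0$ for $N\geq 2$ is precisely the decay rate the paper uses in Lemma~\ref{lem:contraction>}. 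The one technicality you gloss over is the additional cut-off $\phi(s)$ the paper inserts directly into the tail equation (passing from \eqref{invh} to \eqref{invhh}) so that the weighted-norm bound $\|Y^>\phi\|_{L^>,N}\leq C$ can be established via Taylor's remainder on $|s|\leq 1$; without it the invariance of the high-norm ball $D^>$ in Lemma~\ref{lem:prob>} would need a separate argument for $|s|$ large, though since the final statement is only local near $s=0$ this does not change the outcome.
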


\begin{remark}
In Theorem \ref{thm:zero}, $W^0(\theta)$ is unique up to a phase shift.
\end{remark}

\begin{remark}
The above Theorems are in a-posteriori format. The main input needed is 
some function that satisfies the invariance equation approximately. This can be numerical computations (that indeed produce good approximate solutions) or Lindstedt series, see for example \cite{Livia19}. 

Notice that with these 
Theorems, we do not need to analyze the procedure used to produce the approximate solutions. 
The only thing that we need to establish is that the solutions produced satisfy the invariance
equation up to a small error. 
\end{remark}

The a-posteriori format of the  theorem leads to automatic Hölder dependence of the solutions $W^0$ on $\ep$ and $Y$.

It suffices to observe that if we consider $W^0$ solving the invariance equation for some 
$\ep_1, Y_1$, it will solve the invariance equation for $\ep_2, Y_2$ up to an error 
which is bounded in the $C^l$ norm by $C\left(| \ep_1 - \ep_2| + \|Y_1  - Y_2\|_{C^0}\right)^{1-\frac{l}{L}}$

As a matter of fact, one of the advantages of our approach is that it leads 
very easily to smooth dependence on parameters.

\begin{theorem}\label{thm:smooth} 
Consider a family of functions $Y_\eta, r_\eta$ as above, where $\eta$ lies in an open interval $I\subset\R$. 
Assume that $Y_{\eta}$ and $r_{\eta}$ are smooth in their inputs as well as in $\eta$, with bounded derivatives. 

Then for any positive integer $L$,  there is an $\ep_0$ small enough such that when $\ep<\ep_0$, for each $\eta \in I$ we can find 
$\omega_\eta$, $W^0_\eta$ solving \eqref{inv0}. 

Furthermore, the $W^0_\eta(\theta)$ is jointly $C^{L+ Lip}$ in $\eta$, $\theta$. 
\end{theorem}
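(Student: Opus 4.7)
The plan is to reduce Theorem \ref{thm:smooth} to Theorem \ref{thm:zero} applied in a joint function space, treating $\eta$ as an additional, dynamically trivial phase variable. Rather than solving the family of equations \eqref{inv0} at each $\eta$ separately and then arguing \emph{post hoc} about joint smoothness in $(\theta,\eta)$, I would seek directly a pair $(\omega, W^0)$ with $\omega \in C^{L+Lip}(I,\R)$ and $W^0 \in C^{L+Lip}(\mathbb{T}\times I, \mathbb{T}\times\R)$ satisfying
\[
\omega(\eta)\,\partial_\theta W^0(\theta,\eta) - \begin{pmatrix}\omega_0 \\ \lambda_0 W^0_2(\theta,\eta)\end{pmatrix} = \varepsilon\,\overline{Y}_\eta\!\left(W^0(\theta,\eta),\,\widetilde{W}^0(\theta,\eta),\,\varepsilon\right)
\]
simultaneously for all $(\theta,\eta)$. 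The crucial observation is that the linear differential operator on the left only differentiates in $\theta$, so $\eta$ enters only algebraically; together with the hypothesis that $Y_\eta$ and $r_\eta$ are smooth in $\eta$, this means the entire equation depends smoothly on $\eta$ as a parameter.

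As an approximate solution I would take $\omega^0(\eta)=\omega_0$ and $W^{0,0}(\theta,\eta)=(\theta,0)$, optionally improved by a Lindstedt-type correction obtained by matching formal powers of $\varepsilon$ with $\eta$-smooth coefficients. This satisfies the extended invariance equation with error of order $O(\varepsilon)$ in the joint norm. Then I would repeat the proof of Theorem \ref{thm:zero} verbatim in the Banach space $C^{L+Lip}(\mathbb{T}\times I,\mathbb{T}\times\R)$. This reduces to verifying that every ingredient of that proof is uniform in $\eta$: the composition operators $W^0 \mapsto \overline{Y}_\eta\circ(W^0,\widetilde{W}^0)$ are bounded and smooth in the joint space by Faà di Bruno (Lemma \ref{faa}) combined with interpolation (Lemma \ref{lem:inter}); the inverse of the linearized operator (essentially integration along $\omega\partial_\theta$ using the spectral gap from $\lambda_0$) commutes with $\partial_\eta$ and is therefore bounded uniformly in $\eta$; and the smallness threshold for $\varepsilon$ is unchanged from the single-$\eta$ proof.

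The main obstacle is the delay term
\[
\widetilde{W}^0(\theta,\eta) = W^0\!\left(\theta - \omega(\eta)\,\overline{r\circ K}_\eta\!\bigl(W^0(\theta,\eta)\bigr),\ \eta\right),
\]
which evaluates $W^0$ at a point whose $\theta$-coordinate depends on $\eta$ through $\omega(\eta)$, $r_\eta$, and $W^0(\theta,\eta)$ itself. Controlling $L$ joint mixed derivatives of this composition requires simultaneous control of all derivatives of $W^0$ of total order $\leq L$ in both variables, which is exactly the circular estimate that the fixed-point scheme of Theorem \ref{thm:zero} breaks via Faà di Bruno. Two further technical points: to obtain uniform estimates I would first prove the result on compact subintervals $I'\Subset I$ and then cover $I$ by such pieces; and the phase-shift ambiguity from the remark following Theorem \ref{thm:zero} must be normalized in an $\eta$-smooth way (for instance by fixing $W^0_1(0,\eta)=0$ for all $\eta$) so that the joint solution restricts at each $\eta$ to one of the solutions produced by Theorem \ref{thm:zero}.
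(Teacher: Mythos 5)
Your proposal is correct and takes essentially the same approach as the paper: both treat $W^0$ as a function of the joint variables $(\theta,\eta)$, lift $\Gamma^0$ to an operator on two-variable functions, propagate $C^{L+\mathrm{Lip}}$ bounds by Fa\`a di Bruno, contract in the $C^0$ distance, and invoke Lanford's closure lemma (Lemma~\ref{lem:Lan}) to obtain regularity of the fixed point. One small imprecision worth correcting: since $\omega=\omega(\eta)$, the pullback along the flow $\theta\mapsto\theta-\omega(\eta)t$ does \emph{not} literally commute with $\partial_\eta$; differentiating the smoothing integral $\int_0^\infty e^{\lambda_0 t}\,\cdots\,dt$ in $\eta$ brings down factors of $t$, which are harmless because $\int_0^\infty t^k e^{\lambda_0 t}\,dt<\infty$ and are absorbed into the same Fa\`a di Bruno bookkeeping you correctly invoke to break the circularity.
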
 

\begin{theorem}\label{thm:smoothh}
Under assumption of Theorem \ref{thm:smooth}, for any given integers $N\geq2$, and $L\geq2+N$, there is an $\ep_0$ small enough such that when $\ep<\ep_0$, for each $\eta \in I$, we can find $\omega_\eta$, $W^0_\eta$, $\lambda_\eta$, $W^j_\eta$, $j=1,\dotsc,N-1$, and $W^>_\eta(\theta,s)$, which solve the invariance equations \eqref{inv0}, \eqref{inv1}, \eqref{invj}, and \eqref{invh}.

Furthermore, $W^0_\eta(\theta)$ is jointly $C^{L+ Lip}$ in $\eta$, $\theta$; $W^j_\eta(\theta)$, $j=1,\dotsc,N-1$, are jointly $C^{L-1+ Lip}$ in $\eta$, $\theta$; $W^>_\eta(\theta,s)$ is jointly $C^{L-2-N+ Lip}$ in $\eta$, $\theta$, and $s$.
\end{theorem}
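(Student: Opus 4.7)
The plan is to bootstrap from Theorem \ref{thm:smooth}, which already gives us the claim for the zero order piece $(\omega_\eta, W^0_\eta)$. Once $(\omega_\eta, W^0_\eta)$ is known to be jointly $C^{L+Lip}$ in $(\theta,\eta)$, the remaining equations \eqref{inv1}, \eqref{invj}, \eqref{invh} become problems whose unknowns $(\lambda_\eta, W^1_\eta)$, $W^j_\eta$, and $W^>_\eta$ can be found with parameters treated essentially as extra variables.

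First, I would fix $\eta$ and apply Theorem \ref{thm:all} uniformly in $\eta \in I$: the hypotheses hold with bounds independent of $\eta$ because $Y_\eta$ and $r_\eta$ are jointly smooth and so have $C^L$ bounds uniform in $\eta$ on compact subintervals. This produces, for every $\eta$, the tuple $(\omega_\eta, \lambda_\eta, W^0_\eta, W^1_\eta, \dots, W^{N-1}_\eta, W^>_\eta)$. The smoothness in $\theta$ (and in $(\theta,s)$ for $W^>$) with the stated number of derivatives is immediate from Theorem \ref{thm:all}.

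Next, I would obtain Hölder continuity in $\eta$ by exploiting the a-posteriori nature of Theorem \ref{thm:all}. If $\eta, \eta' \in I$ are close, then plugging the solution for $\eta$ into the invariance equations for parameter $\eta'$ gives an approximate solution with error whose $C^0$ norm is $O(|\eta-\eta'|)$ by the smoothness of $Y_\eta, r_\eta, \overline{r\circ K}_\eta$ in $\eta$. The estimates \eqref{aprw0}, \eqref{aprw}, \eqref{aprom}, \eqref{aprlam} then furnish Hölder-type dependence of each of $\omega_\eta, \lambda_\eta, W^j_\eta, W^>_\eta$ on $\eta$ in $C^l$ norms for $l$ strictly below the top regularity index.

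To upgrade Hölder to smooth joint dependence I would use two complementary tools. The first is formal differentiation: differentiating \eqref{inv0}--\eqref{invh} with respect to $\eta$ produces linear functional equations for the candidate derivatives $\partial_\eta W^j_\eta$ and $\partial_\eta W^>_\eta$, whose linearized operators are exactly the ones already invertible in the proof of Theorem \ref{thm:all} (the relative-eigenvalue operator for $W^1$, the shifted operators $\omega\partial_\theta + j\lambda - (\begin{smallmatrix}0\\ \lambda_0\end{smallmatrix}\cdot)$ for $W^j$, and the contraction operator for $W^>$). These linear equations have unique solutions with the claimed regularity, and the Hölder continuity established in the previous step lets me identify these with actual difference quotients via Lemma \ref{lem:Lan}: the difference quotients lie in a ball of $C^{L+Lip}$ with bounds independent of $h$, so any weak limit as $h \to 0$ must coincide with the unique solution of the linearized equation, and the full sequence converges. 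Iterating this argument in $\eta$ produces higher $\eta$-derivatives up to the stated orders, each again satisfying a linear equation of the same structure. Joint regularity in $(\eta,\theta)$ (respectively $(\eta,\theta,s)$) then follows by applying Lemma \ref{lem:Lan} to the family viewed as a single function of all variables, once one has mixed derivatives up to the required order and Lipschitz top derivatives.

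The main obstacle I anticipate is the bookkeeping around the $W^>$ equation: because the cut-off extensions and the composition $\widetilde{W}$ already cost two derivatives and $N$ orders of $s$, one must carefully track how much regularity in $\eta$ survives each differentiation step and check that the linearized fixed-point equation for $\partial_\eta^k W^>_\eta$ still contracts in the appropriate $C^{L-2-N-k+Lip}$ ball. This is done by the same interpolation-and-Faà-di-Bruno estimates used for Theorem \ref{thm:all}, now applied to the augmented unknown $(\eta,\theta,s) \mapsto W^>_\eta(\theta,s)$, which is why the final regularity index in $\eta$ matches the one in $(\theta,s)$ rather than being smaller.
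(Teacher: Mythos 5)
The paper's route to Theorem \ref{thm:smoothh} is much more direct than yours and avoids the pitfall your argument falls into. The paper simply regards $\eta$ as an extra independent variable from the outset: it lifts the operators $\Gamma^0, \Gamma^1, \Gamma^j, \Gamma^>$ to operators $\tilde\Gamma^0, \tilde\Gamma^1, \tilde\Gamma^j, \tilde\Gamma^>$ acting on functions of $(\eta,\theta)$ or $(\eta,\theta,s)$, re-proves the propagated bounds (Lemma \ref{lem:prob0}-style, now via Fa\`a di Bruno in the extra variable, using that $Y_\eta, r_\eta$ are smooth in $\eta$ with uniform bounds) and the $C^0$ contraction (Lemma \ref{lem:contraction0}-style, now with a sup over $\eta$ as well), and then invokes Lemma \ref{lem:Lan} once to conclude the joint regularity in all variables. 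There is never any discussion of $\partial_\eta$ as a separate quantity.

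Your proposal takes a fundamentally different tack (smooth dependence via differentiating formally with respect to the parameter and identifying difference quotients), and there is a genuine gap in it. You write that ``the difference quotients lie in a ball of $C^{L+Lip}$ with bounds independent of $h$'', and then invoke Lemma \ref{lem:Lan}. But this bound is not supplied by anything you established: the a-posteriori estimate \eqref{aprw0} applied with $E^0 = O(|\eta-\eta'|)$ gives H\"older continuity
\[
\|W^0_{\eta+h}-W^0_\eta\|_{C^l}\le C|h|^{1-l/L},\qquad 0\le l<L,
\]
so that $\|(W^0_{\eta+h}-W^0_\eta)/h\|_{C^l}\le C|h|^{-l/L}$, which diverges as $h\to 0$ for every $l>0$. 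You therefore have no uniform-in-$h$ bound in any $C^l$ with $l>0$, let alone in $C^{L+Lip}$, so Lemma \ref{lem:Lan} cannot be applied to the difference quotients as you claim. The standard fix for this kind of argument is a quantitative comparison between the difference quotient and the solution $V_\eta$ of the linearized equation, showing $(W_{\eta+h}-W_\eta)/h - V_\eta\to 0$ directly via a second a-posteriori estimate; but that requires more structure than you invoke, iterating it to order $L$ is a substantial bookkeeping exercise, and it still does not obviously deliver the \emph{joint} $C^{L+Lip}$ regularity in $(\eta,\theta)$ that the theorem asserts. The paper's one-shot lifting argument obtains the joint regularity for free, which is precisely why the authors set it up that way.
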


Note that the regularity conclusions of Theorem~\ref{thm:smooth} can be 
interpreted in a more functional form as saying that the 
mapping that to $\eta$ associates $W^0_\eta$ is $C^{\ell + \text{Lip}} $
when the space of embedding $W$ is given the $C^{L - \ell}$ 
topology. Similar interpretation can be made for Theorem \ref{thm:smoothh}. This functional point of view is consistent with 
the point of view of RFDE where the phase space is 
infinite dimensional. 

\subsection{Results for original problem in a neighborhood of the limit cycle}\label{ssc:orig}
Note that to find the low order terms, $W^0,\dotsc,W^j$, for small $\ep$, the extensions are not needed.
Heuristically, the low order terms are \emph{infinitesimals}. Hence, to compute them, it suffices to 
know the expansion of the vector field.

More precisely, if we take the initial guess for zero order term as $W^{0,0}(\theta)= \left(\begin{smallmatrix}\theta\\0\end{smallmatrix}\right)$, the error for this initial guess is of order $\ep$. Then by theorem \ref{thm:zero}, the true solution $W^0$ is within a distance of order $\varepsilon$ from $W^{0,0}(\theta)$. Therefore, with small choice of $\ep$, we can have $\sup_{\theta\in\mathbb{T}}|W^0_2(\theta)|<\frac{1}{2}$, we are reduced to the case without extension, since
\begin{align*}
&\overline{r\circ K}(W^0(\theta))={r\circ K}(W^0(\theta)),\\
&\overline{Y}(W^0(\theta),\widetilde{W}^0(\theta;\omega), \varepsilon)=Y(W^0(\theta),\widetilde{W}^0(\theta;\omega), \varepsilon).
\end{align*}
where,
\[
\widetilde{W}^0(\theta;\omega)=W^0(\theta-\omega r\circ K(W^0(\theta))).
\]
Then we can rewrite the invariance equation for $W^0$, \eqref{inv0}, as:
\begin{equation}\label{invv0}
\omega \frac{d}{d{\theta}}W^0(\theta)-
\begin{pmatrix}
\omega_0\\ \lambda_0W^0_2(\theta)
\end{pmatrix}=\varepsilon Y(W^0(\theta),\widetilde{W}^0(\theta; \omega), \varepsilon).
\end{equation}
 
 Similar arguments apply for the equations for $W^1$ and $W^j$'s if
we look at expressions of $\overline{Y}^1$ in \eqref{formy1},
$\overline{Y}^j$ in \eqref{formyj}, and form of $R^j$.

We can find $0<s_0<\frac{1}{2}$, such that
$W(\mathbb{T}\times[-s_0,s_0]) \subset \mathbb{T}\times
[-\frac{1}{2},\frac{1}{2}]$, and
$\widetilde{W}(\mathbb{T}\times[-s_0,s_0]) \subset \mathbb{T}\times
[-\frac{1}{2},\frac{1}{2}]$. Therefore, the original problem is solved
in a neighborhood of the limit cycle by applying the results in
section \ref{ssc:prep}.

For the original problem in section \ref{sc:formu}, we have
\begin{cor}[Limit Cycle]\label{Cor:lc}
When $\ep< \ep_0$ in Theorem \ref{thm:zero} is so small that $\sup_{\theta\in\mathbb{T}}|W^0_2(\theta)|<\frac{1}{2}$, equation \eqref{SDDE} admits a limit cycle close to the limit cycle of the unperturbed equation. If $\omega$, $W^0$ solve the invariance equation \eqref{invv0}, then $K\circ W^0(\theta)$ gives a parameterization of the limit cycle of equation \eqref{SDDE}, i.e. for any $\theta$, $K\circ W^0(\theta+\omega t)$ solves equation \eqref{SDDE} for all $t$.
\end{cor}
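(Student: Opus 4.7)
The plan is to apply Theorem~\ref{thm:zero} with the trivial initial guess and then verify that, once the cut-offs are deactivated, solving the prepared invariance equation \eqref{inv0} is equivalent to producing a periodic trajectory of \eqref{SDDE}.

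First, I would take the natural initial guess $W^{0,0}(\theta) = \left(\begin{smallmatrix}\theta\\0\end{smallmatrix}\right)$ and $\omega^0 = \omega_0$, which satisfies the periodicity condition \eqref{pc0}. Substituting into the left-hand side of \eqref{inv0} produces $\omega_0\left(\begin{smallmatrix}1\\0\end{smallmatrix}\right)-\left(\begin{smallmatrix}\omega_0\\0\end{smallmatrix}\right)=0$, while the right-hand side is $\varepsilon\, \overline{Y}(W^{0,0}(\theta),\widetilde{W}^{0,0}(\theta;\omega_0),\varepsilon)$, which is $O(\varepsilon)$ in $C^0$ by the uniform boundedness of $\overline{Y}$. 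The a-posteriori estimate \eqref{aprw0} applied with $l=0$ then produces $\omega$ and $W^0$ solving \eqref{inv0} with $\|W^0-W^{0,0}\|_{C^0}\leq C\varepsilon$, so after shrinking $\varepsilon_0$ if necessary we may assume $\sup_{\theta}|W^0_2(\theta)|<\tfrac12$.

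Second, once this bound holds, the second component of $\widetilde{W}^0(\theta;\omega)=W^0(\theta-\omega\, r\circ K(W^0(\theta)))$ is also bounded in absolute value by $\sup_\theta|W^0_2(\theta)|<\tfrac12$, since it is merely the value of $W^0_2$ at a shifted argument. Because $\phi\equiv 1$ on $[-\tfrac12,\tfrac12]$, the construction of the extensions yields
\begin{equation*}
\overline{r\circ K}(W^0(\theta))=r\circ K(W^0(\theta)),\qquad \overline{Y}(W^0(\theta),\widetilde{W}^0(\theta;\omega),\varepsilon)=Y(W^0(\theta),\widetilde{W}^0(\theta;\omega),\varepsilon),
\end{equation*}
so $(\omega,W^0)$ in fact satisfies the un-extended equation \eqref{invv0}.

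Third, I would verify that $x(t):=K\circ W^0(\theta_0+\omega t)$ solves \eqref{SDDE} for each $\theta_0$. Multiplying \eqref{invv0} through by $DK(W^0(\theta))$ and using the ODE invariance identity \eqref{invk} rewrites it as
\begin{equation*}
DK(W^0(\theta))\,\omega\,\tfrac{d}{d\theta}W^0(\theta)=X_0\bigl(K(W^0(\theta))\bigr)+\varepsilon P\bigl(K(W^0(\theta)),K(\widetilde{W}^0(\theta;\omega)),\varepsilon\bigr).
\end{equation*}
Setting $\theta=\theta_0+\omega t$, the chain rule identifies the left-hand side with $\dot x(t)$, while the definition of $\widetilde{W}^0$ gives $K(\widetilde{W}^0(\theta_0+\omega t;\omega))=K\circ W^0\bigl(\theta_0+\omega(t-r(x(t)))\bigr)=x(t-r(x(t)))$. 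Using the relation $X_0(x)+\varepsilon P(x,y,\varepsilon)=X(x,\varepsilon y)$ from the rewriting \eqref{SDDE2}, the right-hand side becomes $X(x(t),\varepsilon x(t-r(x(t))))$, so \eqref{SDDE} holds for all $t$. Finally, the periodicity condition \eqref{pc0} together with $K(\theta+1,s)=K(\theta,s)$ yields $x(t+1/\omega)=x(t)$, so $K\circ W^0$ traces a closed orbit, which is $O(\varepsilon)$-close to the unperturbed limit cycle by the $C^0$ estimate above.

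The only genuine subtlety is ensuring that the delayed argument $\widetilde{W}^0$ also lies where $\phi\equiv 1$, so that \eqref{inv0} really does coincide with \eqref{invv0}; this is automatic because the second component of $\widetilde{W}^0$ is a value of $W^0_2$, so no additional smallness is lost to the composition. Everything else is bookkeeping around Theorem~\ref{thm:zero} and the chain rule.
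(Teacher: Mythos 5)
Your proof is correct and follows essentially the same approach as the paper: Theorem~\ref{thm:zero} applied to the initial guess $W^{0,0}(\theta)=\left(\begin{smallmatrix}\theta\\0\end{smallmatrix}\right)$ gives $\sup_\theta|W^0_2(\theta)|<\tfrac12$, which deactivates the cut-offs (both for $W^0$ and for the shifted argument $\widetilde W^0$) and reduces \eqref{inv0} to \eqref{invv0}. The chain-rule and \eqref{invk} computation you carry out to verify that $K\circ W^0(\theta_0+\omega t)$ solves \eqref{SDDE} is exactly the (reversed) derivation of the invariance equation in Section~\ref{sc:formu}, which the paper leaves implicit.
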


We can also find a 2-parameter family of solutions close to the limit cycle:
\begin{cor}[Isochrons]
For small $\ep$ as in previous Corollary \ref{Cor:lc}, there are isochrons for the limit cycle of equation \eqref{SDDE}. If $\omega$, $\lambda$, and $W:\mathbb{T}\times\R\to\mathbb{T}\times\R$ solve the extended invariance equation \eqref{invv}, then there exists $0<s_0<\frac{1}{2}$, such that $K\circ W(\theta,s)$, $|s|\leq s_0$ gives a parameterization of the limit cycle with its isochrons in a neighborhood, i.e. for any $\theta$, and $s$, with $|s|\leq s_0$, $K\circ W(\theta+\omega t,se^{\lambda t})$ solves equation \eqref{SDDE} for all $t\geq0$.
\end{cor}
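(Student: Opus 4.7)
The plan is to reduce the Corollary to Theorem~\ref{thm:all} by showing that in a small enough neighborhood of $s=0$ the cut-offs defining $\overline{Y}$ and $\overline{r\circ K}$ are inactive, so that any solution of the extended invariance equation \eqref{invv} is automatically a solution of the original invariance equation \eqref{inv}; then a direct chain-rule calculation shows that \eqref{inv} is precisely the statement that $K\circ W(\theta+\omega t,se^{\lambda t})$ solves \eqref{SDDE}.

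First, I would invoke Theorem~\ref{thm:all} together with the discussion opening Section~\ref{ssc:orig} to obtain $\omega, \lambda, W$ solving \eqref{invv}, with $W$ of the form \eqref{wform1}, close to $W^{0,0}(\theta)=(\theta,0)^{T}$. In particular $W$ is continuous, $W_2(\theta,0)=W_2^0(\theta)$ has $\sup$-norm $O(\varepsilon)$, and $\lambda<0$, $\overline{r\circ K}\in[0,h]$. Next I would choose $s_0$ carefully. For any $t\ge 0$ and $|s|\le s_0$, the current argument $se^{\lambda t}$ has modulus at most $s_0$, while the delayed argument $se^{\lambda t}e^{-\lambda\overline{r\circ K}(W(\cdot))}$ has modulus at most $s_0\, e^{-\lambda h}$. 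Using continuity of $W_2$ at $s=0$ and smallness of $W_2^0$, I pick $s_0>0$ so small that
$$
|W_2(\theta,s')|<\tfrac{1}{2}\quad\text{for all } \theta \text{ and } |s'|\le s_0\, e^{-\lambda h}.
$$
A second application of continuity, using also that $\overline{r\circ K}$ is bounded, yields $|\widetilde W_2(\theta,s)|<\tfrac{1}{2}$ for $|s|\le s_0$, after possibly shrinking $s_0$ once more. On this set both cut-offs satisfy $\phi\equiv 1$, so $\overline{Y}=Y$ and $\overline{r\circ K}=r\circ K$, and \eqref{invv} collapses identically to \eqref{inv}.

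Finally, I would verify the SDDE by differentiation. Fix $\theta_0$ and $s$ with $|s|\le s_0$, and set $x(t)=K\circ W(\theta_0+\omega t,se^{\lambda t})$. The chain rule gives
$$
\dot x(t) \;=\; DK(W)\cdot DW(\theta_0+\omega t,se^{\lambda t})\cdot\begin{pmatrix}\omega\\ \lambda se^{\lambda t}\end{pmatrix}.
$$
By the previous step, \eqref{inv} holds at $(\theta_0+\omega t,se^{\lambda t})$, so multiplying it by $DK(W)$ and using \eqref{invk} converts the right-hand side into $X_0(K\circ W)+\varepsilon P(K\circ W,K\circ\widetilde W,\varepsilon)$, which by the definition of $\widetilde W$ and the definition of $P$ in \eqref{SDDE2} equals $X(x(t),\varepsilon x(t-r(x(t))))$. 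For this last identification, one needs that for $t\ge 0$ the delayed time $\tau=t-r(x(t))\in[-h,\infty)$ lies in the range where the formula $K\circ W(\theta_0+\omega\tau,se^{\lambda\tau})$ still represents a valid point: here $|se^{\lambda\tau}|\le s_0\, e^{-\lambda h}$, which is exactly the region controlled in the choice of $s_0$.

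The main obstacle is the bookkeeping in the choice of $s_0$: since the delay generates the factor $e^{-\lambda h}>1$ whenever $W$ is evaluated at the delayed argument, and since $\widetilde W$ itself feeds into $Y$, one must simultaneously control two nested evaluations of $W_2$. Once $s_0$ is fixed so that both the current and the delayed second components stay inside the interval where $\phi=1$, and once it is noted that $|s|\le s_0$ implies $|se^{\lambda t}|\le s_0$ for all $t\ge 0$ (so the orbit never escapes), the remaining steps are a formal chain-rule verification that does not require any new analytic input beyond Theorem~\ref{thm:all}.
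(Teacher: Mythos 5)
Your proposal is correct and follows essentially the same strategy the paper lays out in Section~\ref{ssc:orig}: choose $s_0$ so that both $W(\T\times[-s_0,s_0])$ and $\widetilde W(\T\times[-s_0,s_0])$ land in $\T\times[-\tfrac12,\tfrac12]$, so the cut-offs are identically $1$ and \eqref{invv} reduces to \eqref{inv}, after which the chain-rule computation identifies $K\circ W(\theta+\omega t,se^{\lambda t})$ as a solution of \eqref{SDDE}. One small simplification: your first choice of $s_0$ (ensuring $|W_2(\theta,s')|<\tfrac12$ for all $|s'|\le s_0 e^{-\lambda h}$) already bounds $|\widetilde W_2(\theta,s)|<\tfrac12$ for $|s|\le s_0$, since the delayed second argument $se^{-\lambda\overline{r\circ K}(\cdot)}$ has modulus at most $s_0 e^{-\lambda h}$; the ``second application of continuity'' and extra shrinking are redundant.
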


Dependence on parameters results  in Theorem \ref{thm:smooth} and \ref{thm:smoothh} apply.


\subsection{Comparison with Results on RFDE based on time evolution}
\label{ssc:evolution} 

The persistence of a periodic solution under perturbation for retarded functional 
differential equation (RFDE) is presented 
in Chapter 10 of \cite{HaleLunel}, notably Theorem $4.1$.  In this section, we present some remarks that can help the specialists to compare our results with 
those obtainable considering the time evolution of RFDEs. 

The set up presented there does not seem to apply to our case since the phase space considered in \cite{HaleLunel} is the space of continuous functions on an interval, namely, $C^0[-h,0]$, and they require differentiability properties of the equation which are not satisfied in our case. Note also that we can obtain smooth dependence on parameters (see Theorem~\ref{thm:smooth}). 
Obtaining such smooth dependence using the methods based on the evolutionary approach would require obtaining regularity of the evolution operator, which does not seem to be available. 

More precisely, if we employ the notation $x_t$ as a function defined on $[-h,0]$, with 
\[
x_t(s)=x(t+s)
\] for $s\in[-h,0]$, we can write our SDDE \eqref{SDDE} as
\[
\dot x(t)=F(x_t,\ep),
\]
where we define $F(\phi,\ep):=X(\phi(0),\ep\phi(-r(\phi(0))))$. For $\ep=0$, we have an ODE, which can be viewed as a delay equation, with a non-degenerate periodic orbit (see \cite{HaleLunel}). However, above $F$ cannot be continuously differentiable in $\phi$ if $\phi$ is only continuous. This obstructs application of Theorem $4.1$ for RFDE in \cite{HaleLunel}. 

It is very interesting to study whether a similar method to the one in \cite{HaleLunel} can be extended to our case with some variations of the phase space (solution manifold, see \cite{Walt}). However, since only $C^1$ regularity of the evolution has been proved(\cite{Walt}), (higher regularity of the evolution in SDDE seems problematic), one cannot hope to obtain the dependence on parameters to be more  regular than $C^1$. On the other hand, the method  in this paper allows to get rather straightforwardly higher smoothness with respect to parameters. 
See Theorem~\ref{thm:smooth}. 
We mention that some progress in continuation of periodic orbits is in \cite{MPNP}.

Considering RFDE's as evolutions in infinite dimensional phase spaces, \cite{HaleLunel} establishes the existence of infinite-dimensional strong stable manifolds for periodic orbits corresponding to the Floquet multipliers smaller than a number. 

Again, we remark that there are some technical issues of regularity of evolutions in phase space of SDDE to define stable manifolds and even stability. We hope that these regularity issues of the evolution can be made precise (using techniques as in \cite{Walt,Obaya, MPN}).

Nevertheless, there is a very fundamental difference between the manifolds we consider and those in \cite{HaleLunel}.

If we consider the unperturbed ODE as an RFDE in an infinite dimensional phase space, the Floquet multipliers are $1$ with multiplicity 1, $\exp{(\frac{\lambda_0}{\omega_0})}$ with multiplicity 1, and $0$ (with infinite multiplicity). With $C^1$-smoothness of the evolution as in \cite{Walt}, under small perturbation, we would have the Floquet multipliers be similar to those (one exactly $1$, one close to $\exp({\frac{\lambda_0}{\omega_0}})$ and infinitely many near $0$).

The theory developed in \cite{HaleLunel} attaches an infinite-dimensional manifold to the most stable part of the spectrum. That is the strong stable manifold.

The manifold that we consider here, in the infinite-dimensional phase space, is attached to the least stable Floquet multiplier, hence it is a slow stable manifold from the infinite-dimensional point of view. 

We think that the finite-dimensional manifold we obtain are more practically relevant
than the strong stable manifold. We expect that infinitely many modes
will die out the fastest and, therefore, be hard to observe. All the solutions of the full problem will 
be asymptotically similar to the solutions we consider.  
In summary, solutions close to the limit cycle will converge to the limit cycle tangent to the slow stable manifolds
described here. One problem to make all this precise is that the evolution is 
only known to be $C^1$. 

Our motivation is to obtain solutions which resemble solutions of the ODE, in accordance with the physical intuition that the solutions in the perturbed problem -- in spite of the singular nature of
the perturbation -- look similar to those of the unperturbed problem (this is  the reason why relativity
and its delays were hard to discover). 
 
One of the features of the formalism in this paper is that it allows to describe in a unified way the solutions 
of the SDDE in an infinite dimensional space and the finite dimensional solutions of the unperturbed ODE problem.

Of course in this paper, we only deal with models of a very special kind (we indeed have 
the hope that the range of applicability of the method can be expanded; the models considered 
in this paper are a proof of concept) 
but we obtain rather smooth invariant manifolds and
smooth dependence on parameters with high degree of differentiability.
Furthermore, the proof presented here  leads to algorithms to compute the limit cycles and 
their manifolds. These algorithms are practical and have been implemented, see \cite{Joan19}.

It is also interesting to investigate whether evolution based methods lead to computational algorithms \cite{JoanTh} and compare them with the algorithms based on functional equations as in \cite{Joan19}.

\section{Overview of the proof} \label{sc:overpf}
In equation \eqref{inv0}, $\omega$ and $W^0$ are unknowns. Under a choice of the phase, we define an operator such that its fixed point solves \eqref{inv0}. We will show that when $\ep$ is small enough, the operator is a ``$C^0$" contraction and maps a $C^{L+Lip}$ ball to itself. Then one obtains the existence of the fixed point $(\omega, W^0)$, and that $W^0$ in the fixed point has some regularity. Therefore, equation \eqref{inv0} is solved.

In equation \eqref{inv1}, $\lambda$ and $W^1$ are unknowns. We will impose an appropriate normalization when defining the operator to make sure the solution is uniquely found, and that $W$ is close to the identity map with appropriate scaling factor. Then similar to above case, for small enough $\varepsilon$, this operator has a fixed point $(\lambda, W^1)$ in which $W^1$ has some regularity.

In equation \eqref{invj}, $W^j$ is the only unknown. We define an operator which is a contraction for small enough $\ep$. The operator has a fixed point with certain regularity solving the equation.

In equation \eqref{invh}, $W^>$ is an unknown function of 2 variables. We will define an operator on a function space with a weighted norm, then prove for small $\varepsilon$, this operator has a fixed point in this function space, which solves the equation \eqref{invh}.

We emphasis again that for small enough $\ep$, the equation for $W^>$ is the only place where extension is needed. (Recall section \ref{ssc:orig})

There are finitely many smallness conditions for $\ep$,  so there are $\ep$'s which satisfy all the smallness conditions.

Same idea will be used for proving the smooth dependence on parameters.

\section{Proof of the main results} \label{sc:pf}
\subsection{Zero Order Solution}
\label{sc:plc}
In this section, we prove our first result, Theorem \ref{thm:zero}.

Recall \eqref{inv0}, invariance equation for $\omega$ and $W^0$, as in section \ref{formw}, which is obtained by setting $s=0$ in equation \eqref{invv}.

Componentwise, $W^0=(W^0_1, W^0_2)$, and $\overline{Y}=(\overline{Y}_1, \overline{Y}_2)$, we have the equations as:
\begin{align}
&\omega \frac{d}{d{\theta}}W^0_1(\theta)-
\omega_0 =\varepsilon \overline{Y}_1(W^0(\theta),\widetilde{W}^0(\theta;\omega), 
\varepsilon),\label{inv01} 
\intertext{and}
&\omega \frac{d}{d{\theta}}W^0_2(\theta)- \lambda_0W^0_2(\theta) =\varepsilon 
\overline{Y}_2(W^0(\theta),\widetilde{W}^0(\theta;\omega), \varepsilon). \label{inv02}
\end{align}

Taking periodicity condition \eqref{pc0} into account, we define an operator $\Gamma^0$ as follows:
\begin{equation}\label{op0}
\begin{split}
\Gamma^0\begin{pmatrix}
{a}\\
{Z}_1 \\
{Z}_2
\end{pmatrix}(\theta)&=
\begin{pmatrix}
\Gamma^0_1({a},{Z})\\
\Gamma^0_2({a},{Z})(\theta)\\
\Gamma^0_3({a},{Z})(\theta)
\end{pmatrix}\\&=
\begin{pmatrix}
\omega_0+\varepsilon \int^1_0 \overline{Y}_1({Z}(\theta),\widetilde{{Z}}(\theta;{a}), \varepsilon)d\theta \\
\frac{1}{\Gamma^0_1({a},~{Z})}\big(\omega_0\theta+\varepsilon \int^{\theta}_0 \overline{Y}_1({Z}(\sigma),\widetilde{{Z}}(\sigma;{a}), \varepsilon) d\sigma\big) \\
\varepsilon\int_0^{\infty}e^{\lambda_0 t} \overline{Y}_2({Z}(\theta-{a} t),\widetilde{{Z}}(\theta-{a} t;{a}), \varepsilon) dt
\end{pmatrix},
\end{split}
\end{equation}

Notice that if $\Gamma^0$ has a fixed point $({a}^*,{Z}^*)$, then \eqref{inv0} are solved by ${a}^*$ and ${Z}^*$, at the same time, periodic condition \eqref{pc0} is satisfied.\\
\begin{remark}
As we can see, the operator $\Gamma^0$ will depend on $\varepsilon$, however, to simplify the expression, we will not include $\varepsilon$ in the notation of the operator $\Gamma^0$.
\end{remark}

\begin{remark}
Similar to Remark \ref{rmk:nonuni}, we will not have uniqueness of the solution to invariance equation \eqref{inv0}. Once we have a solution $W^0(\theta)$ to the equation, for any $\theta_0$, $W^0(\theta+\theta_0)$ will also solve the equation, which is called phase shift. This is indeed the only source of non-uniqueness.

By considering the operator \eqref{op0}, we fix a phase by $\Gamma^0_2({a},{Z})(0)=0$.
\end{remark}

For the domain of $\Gamma^0$, we consider a closed interval $I^0=\{{a}: |{a}-\omega_0|\leq\frac{\omega_0}{2}\}$. For a fixed positive integer $L$, define a subset of the space of functions which are $L$ times differentiable, with Lipschitz $L$-th derivative as follows:
\begin{align}
\C^{L+Lip}_0=\{f\ |\ f:&{\T}\to{\T}\times\R, f\text{ can be lifted to a function from } \R \text{ to } \R^2, \nonumber\\ 
&\text{still denoted as }f, \text{which satisfies } f(\theta+1)=f(\theta)+\left(\begin{smallmatrix}1\\0\end{smallmatrix}\right),\nonumber \\
&f_1(0)=0, \|f\|_{L+Lip}\leq B^0\},
\end{align}

where 
\[
\|f\|_{L+Lip}=\max_{i=1,2, k=0,\dotsc,L}\{\sup_{\theta\in[0,1]}\|f^{(k)}_i(\theta)\|,~Lip(f^{(L)}_i)\}.
\]

Define $D^0=I^0\times \C^{L+Lip}_0$, then $\Gamma^0$ is defined on $D^0$. We have the following:

\begin{lemma}\label{lem:prob0}
There exists $\ep^0>0$, such that when $\varepsilon<\ep^0$, $\Gamma^0(D^0)\subset D^0$. 
\end{lemma}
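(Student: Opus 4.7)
To show $\Gamma^0(a,Z)\in D^0$ for every $(a,Z)\in D^0$, I must verify four things: (i) $\Gamma^0_1(a,Z)\in I^0$; (ii) the pair $(\Gamma^0_2,\Gamma^0_3)$ satisfies the periodicity condition \eqref{pc0} after lifting; (iii) $\Gamma^0_2(a,Z)(0)=0$; (iv) $\|(\Gamma^0_2,\Gamma^0_3)\|_{L+Lip}\le B^0$. The approach is to fix $B^0$ first as a sufficiently large constant (depending only on $L$, $\omega_0$, $\lambda_0$, and on a common uniform bound $M$ for the $C^{L+1}$ norms of $\overline{Y}$ and $\overline{r\circ K}$, both finite by construction of the extensions) and then to pick $\ep^0$ small enough that (i)--(iv) hold for every $(a,Z)\in D^0$.

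\textbf{Easy conditions.} Item (i) is immediate: since $|\overline{Y}_1|\le M$, one has $|\Gamma^0_1-\omega_0|\le \ep M$, which is at most $\omega_0/2$ once $\ep\le\omega_0/(2M)$. Item (iii) is visible from the formula. For (ii), the $1$-periodicity of $K$ in its first argument forces $\overline{Y}$ and $\overline{r\circ K}$ to be invariant under translating their first argument by $(1,0)^T$; this yields $\widetilde{Z}(\theta+1;a)=\widetilde{Z}(\theta;a)+(1,0)^T$, and a change of variables in the defining integrals gives
\[
\Gamma^0_2(a,Z)(\theta+1)-\Gamma^0_2(a,Z)(\theta)=\frac{\omega_0+\ep\int_0^1\overline{Y}_1\,d\theta}{\Gamma^0_1(a,Z)}=1,
\]
and $\Gamma^0_3(a,Z)(\theta+1)=\Gamma^0_3(a,Z)(\theta)$.

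\textbf{The $C^{L+Lip}$ estimate.} The main work is (iv). Differentiating $\Gamma^0_2$ once yields
\[
\partial_\theta\Gamma^0_2=\frac{\omega_0+\ep\,\overline{Y}_1(Z(\theta),\widetilde{Z}(\theta;a),\ep)}{\Gamma^0_1(a,Z)},
\]
so $\|\partial_\theta\Gamma^0_2\|_{C^0}\le 1+C\ep$, which combined with $\Gamma^0_2(0)=0$ and periodicity gives $\|\Gamma^0_2\|_{C^0([0,1])}\le 1+C\ep$. For $k\ge 2$, $\partial_\theta^k\Gamma^0_2=(\ep/\Gamma^0_1)\,\partial_\theta^{k-1}[\overline{Y}_1(Z(\theta),\widetilde{Z}(\theta;a),\ep)]$, which carries an explicit $\ep$ prefactor. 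For $\Gamma^0_3$ the $\ep$ prefactor is present from the definition; differentiating under the integral produces polynomial factors in $t$ (from chain-rule powers of $a\in I^0$ applied to $Z(\theta-at)$), which are tamed by $e^{\lambda_0 t}$ with $\lambda_0<0$, keeping every integral convergent with bounds depending only on $\lambda_0$, $\omega_0$, $M$, $B^0$, and $L$.

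\textbf{Main obstacle.} The delicate step is bounding derivatives of the delayed composition $\widetilde{Z}(\theta;a)=Z(\theta-a\,\overline{r\circ K}(Z(\theta)))$. Applying Faà di Bruno (Lemma~\ref{faa}) to this nested expression yields a sum over partitions involving derivatives of $Z$ (bounded by $B^0$), derivatives of $\overline{r\circ K}$ (bounded by $M$), and derivatives of the outer $Z$ evaluated at the delayed argument (again bounded by $B^0$); the upshot is $\|\widetilde{Z}\|_{L+Lip}\le P(B^0,M)$ for some explicit polynomial $P$ independent of $\ep$, and composing with $\overline{Y}_1$ or $\overline{Y}_2$ preserves this polynomial form. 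Since every contribution to $\|(\Gamma^0_2,\Gamma^0_3)\|_{L+Lip}$ beyond the $1+C\ep$ coming from the ``identity'' part of $\Gamma^0_2$ is multiplied by $\ep$, it suffices to fix $B^0=2$ and then take $\ep^0$ small enough that the remaining polynomial-in-$B^0$ bounds are all at most $B^0$; the Lipschitz control of the top derivative is obtained by the same Faà di Bruno bookkeeping combined with the Lipschitz input built into $\C^{L+Lip}_0$, completing the verification that $\Gamma^0(D^0)\subset D^0$.
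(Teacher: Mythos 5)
Your proposal is correct and follows essentially the same approach as the paper: control $\Gamma^0_1$ via boundedness of $\overline{Y}_1$, verify periodicity, use Fa\`a di Bruno to show every $\theta$-derivative of $\Gamma^0_2$ beyond the first (and every derivative of $\Gamma^0_3$) carries an explicit $\ep$ prefactor, and then fix $B^0$ (the paper takes $B^0>2$, you take $B^0=2$; both work) before shrinking $\ep^0$. One small inaccuracy worth noting: $\theta$-differentiation of the integrand $\overline{Y}_2(Z(\theta-at),\widetilde Z(\theta-at;a),\ep)$ does \emph{not} produce polynomial-in-$t$ factors, since the chain rule applied in $\theta$ only yields $Z^{(k)}(\theta-at)$ with unit inner derivative; such $t$-factors arise only when differentiating in $a$ (which is what happens in the contraction estimate, Lemma~\ref{lem:contraction0}), so here the $e^{\lambda_0 t}$ decay is needed merely for convergence of the integral, and your argument goes through unchanged.
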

\begin{proof} 
For $({a},{Z}) \in D^0$, by assumption, we have that $\overline{Y}_1({Z}(\theta),\widetilde{{Z}}(\theta;{a}), \varepsilon)$ is bounded by a constant which is independent of choice of $({a},{Z})$ in $D^0$. Then, one can choose $\varepsilon$ small enough such that $\Gamma^0_1({a},{Z})=\omega_0+\varepsilon \int^1_0 \overline{Y}_1({Z}(\theta),\widetilde{{Z}}(\theta;{a}), \varepsilon)d\theta$ is in $I^0$.

Now consider $\Gamma^0_2({a},{Z})(\theta)=\frac{1}{\Gamma^0_1({a},{Z})}\big(\omega_0\theta+\varepsilon \int^{\theta}_0 \overline{Y}_1({Z}(\sigma),\widetilde{{Z}}(\sigma;{a}), \varepsilon) d\sigma\big)  $. First we observe that
\[
\Gamma^0_2({a},{Z})(\theta+1)=\Gamma^0_2({a},{Z})(\theta)+1.
\] 
Then we need to check bounds for the derivatives
\[
\frac{d}{d\theta}\Gamma^0_2({a},{Z})(\theta)=\frac{1}{\Gamma^0_1({a},{Z})}\big(\omega_0+\varepsilon \overline{Y}_1({Z}(\theta),\widetilde{{Z}}(\theta;{a}), \varepsilon)\big).
\]
By Fa\'a  di Bruno's formula in Lemma \ref{faa}, for $2\leq n\leq L$, $\frac{d^n}{d\theta^n}\Gamma^0_2({a},{Z})(\theta)$ will be a polynomial of a common factor $\frac{\varepsilon}{\Gamma^0_1({a},{Z})}$, each term will contain products of derivatives of $\overline{Y}_1$, ${Z}$, and $\overline{r\circ K}$ up to order $(n-1)$. By assumption on $\overline{Y}_1$ and $\overline{r\circ K}$, for $({a},{Z}) \in D^0$, if we choose $B^0$ to be larger than 2, then for small enough $\varepsilon$,   $\Gamma^0_2({a},{Z})(\theta)$ on $[0,1]$ has derivatives up to order $L$ bounded by $B^0$ and $L-th$ derivative Lipschitz with Lipschitz constant less than $B^0$.

For $\Gamma^0_3({a},{Z})(\theta)=\varepsilon\int_0^{\infty}e^{\lambda_0 t} \overline{Y}_2({Z}(\theta-{a} t),\widetilde{{Z}}(\theta-{a} t;{a}), \varepsilon) dt$. It satisfies
\[
\Gamma^0_3({a},{Z})(\theta+1)=\Gamma^0_3({a},{Z})(\theta).
\]
To establish bounds for the derivatives of $\Gamma^0_3({a},{Z})(\theta)$, we apply a similar argument as above. Notice that for $n\leq L$, $\frac{\partial^n}{\partial\theta^n}\overline{Y}_2({Z}(\theta-{a} t),\widetilde{{Z}}(\theta-{a} t;{a}), \varepsilon)$ will be a polynomial with each term a product of derivatives of $\overline{Y}_2$, ${Z}$, and $\overline{r\circ K}$ up to order $n$. With regularity of $\overline{Y}_2$, and $\overline{r\circ K}$, for $({a},{Z}) \in D^0$, $|\frac{\partial^n}{\partial\theta^n}\overline{Y}_2({Z}(\theta-{a} t),\widetilde{{Z}}(\theta-{a} t), \varepsilon)|$ will be bounded. Therefore, for small enough $\varepsilon$, $\Gamma^0_3(a,{Z})$ has derivatives up to order $L$ bounded by $B^0$ and its $L-th$ derivative is Lipschitz with Lipschitz constant less than $B^0$.

If we take $\varepsilon^0$ such that above conditions are satisfied at the same time, then for $\varepsilon<\varepsilon^0$, we have $\Gamma^0(D^0)\subset D^0$.
\end{proof}

We now define a distance on $D^0$, which is essentially $C^0$ distance. Under this distance, the space $D^0$ is complete. For $({a}, {Z})$ and $({a'}, {Z'})$ in $D^0$, 
\begin{equation}\label{dist}
d(({a}, {Z}), ({a'}, {Z'}))=|{a}-{a'}|+\|Z-Z'\|,
\end{equation}
where 
\begin{equation}\label{norm}
\|Z-Z'\|=\max\left\{\sup_{\theta}|{Z}_1(\theta)-{Z'_1}(\theta)|, \sup_{\theta}|{Z}_2(\theta)-{Z'_2}(\theta)|\right\}.
\end{equation}
\begin{lemma}\label{lem:contraction0}
There exists $\ep^0>0$, such that when $\varepsilon<\ep^0$, under above choice of distance \eqref{dist} on $D^0$, the operator $\Gamma^0$ is a contraction. 
\end{lemma}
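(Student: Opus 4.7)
The plan is a straightforward contraction estimate: since each component of $\Gamma^0$ differs from a simple, $Z$- and $a$-independent quantity by a term of order $\varepsilon$, I aim to show that $d(\Gamma^0(a,Z), \Gamma^0(a',Z')) \le C\varepsilon\, d((a,Z),(a',Z'))$ with $C$ depending only on $\omega_0,\lambda_0,B^0$, and uniform bounds on $\overline{Y}$ and $\overline{r\circ K}$. Choosing $\varepsilon^0$ so that $C\varepsilon^0 < 1$ gives the contraction. The $C^0$-based metric in \eqref{dist}--\eqref{norm} is crucial: it keeps composition-type terms tractable since every place where we would need to differentiate an external function we can absorb the derivative into the uniformly bounded $D\overline{Y}$, $D\overline{r\circ K}$, and $DZ$, $DZ'$ (where the latter are bounded by $B^0$ on $D^0$).

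The central estimate is a Lipschitz bound on the delayed term $\widetilde{Z}(\theta;a) = Z(\theta - a\,\overline{r\circ K}(Z(\theta)))$. Writing
\begin{align*}
\widetilde{Z}(\theta;a) - \widetilde{Z}'(\theta;a')
&= \bigl[Z - Z'\bigr]\!\bigl(\theta - a\,\overline{r\circ K}(Z(\theta))\bigr) \\
&\quad + Z'\!\bigl(\theta - a\,\overline{r\circ K}(Z(\theta))\bigr) - Z'\!\bigl(\theta - a'\,\overline{r\circ K}(Z'(\theta))\bigr),
\end{align*}
and bounding the second line by $\|DZ'\|_{C^0}\bigl(|a-a'|\cdot\|\overline{r\circ K}\|_{C^0} + |a'|\cdot\mathrm{Lip}(\overline{r\circ K})\cdot\|Z-Z'\|\bigr)$, yields $\|\widetilde{Z}-\widetilde{Z}'\|\le C_1\bigl(|a-a'|+\|Z-Z'\|\bigr)$. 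Then, by the uniform Lipschitz bound on $\overline{Y}_1$,
\[
|\Gamma^0_1(a,Z) - \Gamma^0_1(a',Z')| \le \varepsilon\, C_2\,(|a-a'|+\|Z-Z'\|).
\]
The same estimate, integrated over $\theta\in[0,1]$, bounds the numerator of $\Gamma^0_2$; combined with $|\Gamma^0_1(a,Z)-\omega_0|=O(\varepsilon)$ and $\Gamma^0_1\ge\omega_0/2$ to handle the difference of the $1/\Gamma^0_1$ prefactors (which contributes another $O(\varepsilon)$ Lipschitz term because the numerator is $O(1)$), I get $\|\Gamma^0_2(a,Z)-\Gamma^0_2(a',Z')\|\le\varepsilon\,C_3\,(|a-a'|+\|Z-Z'\|)$.

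The main obstacle, and the only genuinely delicate piece, is the infinite-time integral in $\Gamma^0_3$, because the parameter $a$ multiplies $t$ inside $Z(\theta-at)$, so naive Lipschitz control in $a$ produces a factor of $t$. The fix is the exponential weight: since $\lambda_0<0$,
\[
\int_0^\infty e^{\lambda_0 t}\,dt = \tfrac{1}{|\lambda_0|}, \qquad \int_0^\infty t\,e^{\lambda_0 t}\,dt = \tfrac{1}{\lambda_0^2},
\]
are both finite. Splitting the integrand difference at time $t$ as in the previous paragraph, the contribution from $\|Z-Z'\|$ is controlled by $\int_0^\infty e^{\lambda_0 t}\,dt$ and the contribution from $|a-a'|$ gains an extra factor $t\,\|DZ'\|_{C^0}$ controlled by $\int_0^\infty t\,e^{\lambda_0 t}\,dt$. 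Hence $\|\Gamma^0_3(a,Z)-\Gamma^0_3(a',Z')\|\le\varepsilon\,C_4\,(|a-a'|+\|Z-Z'\|)$.

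Summing the three estimates gives $d(\Gamma^0(a,Z),\Gamma^0(a',Z'))\le\varepsilon\,C_5\,d((a,Z),(a',Z'))$ with $C_5=C_5(\omega_0,\lambda_0,B^0,\|\overline{Y}\|_{C^1},\|\overline{r\circ K}\|_{C^1})$. Taking $\varepsilon^0 = \min\{\varepsilon^0_{\text{Lem.~\ref{lem:prob0}}},\,1/(2C_5)\}$ makes $\Gamma^0$ a contraction on the complete metric space $(D^0,d)$, proving the lemma. The only thing to double-check is that $\varepsilon^0$ here is compatible with the one from Lemma~\ref{lem:prob0} so that $\Gamma^0(D^0)\subset D^0$ remains valid; taking the minimum of the two thresholds handles this.
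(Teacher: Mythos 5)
Your proposal is correct and follows essentially the same route as the paper's proof: add-and-subtract decomposition of the delayed term, Lipschitz/mean-value bounds via the uniformly bounded derivatives of $\overline{Y}$, $\overline{r\circ K}$, and $Z\in D^0$, careful handling of the $1/\Gamma^0_1$ prefactor in $\Gamma^0_2$, and for $\Gamma^0_3$ using the finiteness of $\int_0^\infty te^{\lambda_0 t}\,dt$ to absorb the linear-in-$t$ factor generated by differentiating $Z(\theta-at)$ in $a$. The paper writes out the same estimates more explicitly (e.g.\ \eqref{es0}, \eqref{es00}, \eqref{R1}--\eqref{R3}) but the structure, the key observations, and the final $\varepsilon\cdot C$ contraction constant are identical.
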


\begin{proof}

We will show that for $\varepsilon$ small enough,(the explicit  form 
of smallness conditions will become clear along the proof), we can find a constant $\mu_0<1$ such that for distance defined above in \eqref{dist}
\begin{equation}\label{ctr0}
d(\Gamma^0({a}, {Z}), \Gamma^0({a'}, {Z'}))<\mu_0 d(({a}, {Z}), ({a'}, {Z'})).
\end{equation}

Note that 
\begin{equation}\label{dis0}
\begin{split}
d(\Gamma^0({a}, {Z}), \Gamma^0({a'}, {Z'}))=
&\left|\Gamma^0_1({a}, {Z})-\Gamma^0_1({a'}, {Z'})\right|\\
&+\|(\Gamma^0_2({a}, {Z}),\Gamma^0_3({a}, {Z}))-(\Gamma^0_2({a'}, {Z'}),\Gamma^0_3({a'}, {Z'}))\|
\end{split}
\end{equation}

More explicitly, above distance is
\begin{equation}\label{dis00}
\begin{split}
\varepsilon &\left|\int^1_0 \overline{Y}_1({Z}(\theta),\widetilde{{Z}}(\theta;{a}), \varepsilon)d\theta-\int^1_0 \overline{Y}_1({Z'}(\theta),\widetilde{{Z'}}(\theta;{a'}), \varepsilon)d\theta\right|\\
&+\max\biggl\{\sup_{\theta}\biggl\lvert\frac{1}{\Gamma^0_1({a},{Z})}\big(\omega_0\theta+\varepsilon \int^{\theta}_0 \overline{Y}_1({Z}(\sigma),\widetilde{{Z}}(\sigma;{a}), \varepsilon) d\sigma\big)\\
&\phantom{AAAAAAAA}-\frac{1}{\Gamma^0_1({a'},{Z'})}\big(\omega_0\theta+\varepsilon \int^{\theta}_0 \overline{Y}_1({Z'}(\sigma),\widetilde{{Z'}}(\sigma;{a'}), \varepsilon) d\sigma\big)\biggr\rvert,\\
&\phantom{AAAAAAA}\varepsilon\sup_{\theta}\biggl\lvert\int_0^{\infty}e^{\lambda_0 t} \overline{Y}_2({Z}(\theta-{a} t),\widetilde{{Z}}(\theta-{a} t;{a}), \varepsilon) dt\\
&\phantom{AAAAAAAAAAA}-\int_0^{\infty}e^{\lambda_0 t} \overline{Y}_2({Z'}(\theta-{a'} t),\widetilde{{Z'}}(\theta-{a'} t;{a'}), \varepsilon) dt\biggr\rvert\biggr\}
\end{split}
\end{equation}

Now we consider each term of above expression \eqref{dis00}. Note that in the above expression, it suffices to take the supremums for $\theta\in[0,1]$, which follows from periodicity condition \eqref{pc0}. By adding and subtracting terms, we have
\begin{equation*}
\begin{split}
&\left|\overline{Y}_1({Z}(\theta),\widetilde{{Z}}(\theta;{a}), \varepsilon)- \overline{Y}_1({Z'}(\theta),\widetilde{{Z'}}(\theta;{a'}), \varepsilon)\right|\\
&=\left|\overline{Y}_1({Z}(\theta),{Z}(\theta-{a} \overline{r\circ K}({Z}(\theta))), \varepsilon)- \overline{Y}_1({Z'}(\theta),{Z'}(\theta-{a'} \overline{r\circ K}({Z'}(\theta))), \varepsilon)\right|\\
&\leq\left|\overline{Y}_1({Z}(\theta),{Z}(\theta-{a} \overline{r\circ K}({Z}(\theta))), \varepsilon)- \overline{Y}_1({Z'}(\theta),{Z}(\theta-{a} \overline{r\circ K}({Z}(\theta))), \varepsilon)\right|\\
&\phantom{A}+\left|\overline{Y}_1({Z'}(\theta),{Z}(\theta-{a} \overline{r\circ K}({Z}(\theta))), \varepsilon)-\overline{Y}_1({Z'}(\theta),{Z'}(\theta-{a} \overline{r\circ K}({Z}(\theta))), \varepsilon)\right|\\
&\phantom{A}+\left|\overline{Y}_1({Z'}(\theta),{Z'}(\theta-{a} \overline{r\circ K}({Z}(\theta))), \varepsilon)-\overline{Y}_1({Z'}(\theta),{Z'}(\theta-{a'} \overline{r\circ K}({Z}(\theta))), \varepsilon)\right|\\
&\phantom{A}+\left|\overline{Y}_1({Z'}(\theta),{Z'}(\theta-{a'}\overline{r\circ K}({Z}(\theta))), \varepsilon)-\overline{Y}_1({Z'}(\theta),{Z'}(\theta-{a'} \overline{r\circ K}({Z'}(\theta))), \varepsilon)\right|.
\end{split}
\end{equation*}

By the mean value theorem, and the fact that $({a}, {Z})$ and $({a'}, {Z'})$ are in $D^0$, we have
\begin{equation}\label{es0}
\begin{split}
\bigl|\overline{Y}_1({Z}(\theta),&\widetilde{{Z}}(\theta;{a}), \varepsilon)- \overline{Y}_1({Z'}(\theta),\widetilde{{Z'}}(\theta;{a'}), \varepsilon)\bigr|\\
&\leq 2\|D\overline{Y}_1\|\|{Z}-{Z'}\|+\|D\overline{Y}_1\| \|D{Z'}\| \|\overline{r\circ K}\||{a}-{a'}|\\
&\phantom{AA}+|D\overline{Y}_1\|\|D{Z'}\| |{a'}|\|D(\overline{r\circ K})\|\|{Z}-{Z'}\|\\
&\leq \|D\overline{Y}_1\|\left(2+B^0 |{a'}|\|D(\overline{r\circ K})\|\right)\|{Z}-{Z'}\|\\
&\phantom{AA}+\|D\overline{Y}_1\|B^0 \|\overline{r\circ K}\||{a}-{a'}|.
\end{split}
\end{equation}
Where all the norms are the usual supremum norms on $\R$ or $\R^2$ (defined as above in \eqref{norm}), with 
\begin{equation}\label{dynorm}
\|D\overline{Y}_1\|=\max\{\|D_1\overline{Y}_1\|, \|D_2\overline{Y}_1\|\},
\end{equation}
where $\|D_i\overline{Y}_1\|$, $i=1,2$ is the supremum of the operator norm corresponding to the infinity norm defined on $\R^2$.

Then, 
\begin{equation}\label{R1}
\begin{split}
\left|\Gamma^0_1({a}, {Z})-\Gamma^0_1({a'}, {Z'})\right|&\leq
\varepsilon\|D\overline{Y}_1\|\left(2+B^0 |{a'}|\|D(\overline{r\circ K})\|\right)\|Z-Z'\|\\
&\phantom{AA}+\varepsilon B^0\|D\overline{Y}_1\|\|\overline{r\circ K}\||{a}-{a'}|
\end{split}
\end{equation}

Now consider the first component of the maximum, for $\theta\in[0,1]$, by adding and subtracting terms, we have:
\begin{equation}\label{R20}
\begin{split}
&\left|\Gamma^0_2({a}, {Z})-\Gamma^0_2({a'}, {Z'})\right|\\ &\phantom{AA}\leq \frac{\varepsilon}{|\Gamma^0_1({a}, {Z})|}\int_0^1\left|\overline{Y}_1({Z}(\theta),\widetilde{{Z}}(\theta), \varepsilon)d\theta- \overline{Y}_1({Z'}(\theta),\widetilde{{Z'}}(\theta), \varepsilon)\right|d\theta\\
&\phantom{AAAA}+\frac{\varepsilon\int^1_0 \left|\overline{Y}_1({Z'}(\theta),\widetilde{{Z'}}(\theta;{a'}), \varepsilon)\right|d\theta}{|\Gamma^0_1({a}, {Z})\Gamma^0_1({a'}, {Z'})|}\left|\Gamma^0_1({a}, {Z})-\Gamma^0_1({a'}, {Z'})\right|\\
&\phantom{AAAA}+\frac{|\omega_0|}{|\Gamma^0_1({a}, {Z})\Gamma^0_1({a'}, {Z'})|}\left|\Gamma^0_1({a}, {Z})-\Gamma^0_1({a'}, {Z'})\right|\\
&\phantom{AA}\leq\frac{\varepsilon}{|\Gamma^0_1({a}, {Z})|}\int_0^1\left|\overline{Y}_1({Z}(\theta),\widetilde{{Z}}(\theta), \varepsilon)d\theta- \overline{Y}_1({Z'}(\theta),\widetilde{{Z'}}(\theta), \varepsilon)\right|d\theta\\
&\phantom{AAAA}+\frac{|\omega_0|+\varepsilon\|\overline{Y}_1\|}{|\Gamma^0_1({a}, {Z})\Gamma^0_1({a'}, {Z'})|}\left|\Gamma^0_1({a}, {Z})-\Gamma^0_1({a'}, {Z'})\right|.
\end{split}
\end{equation}

By \eqref{es0} and \eqref{R1}, with $\Gamma^0_1({a}, {Z})$, $\Gamma^0_1({a'}, {Z'})$ $\in I^0$, we have,
\begin{equation}\label{R2}
\begin{split}
&\left|\Gamma^0_2({a}, {Z})-\Gamma^0_2({a'}, {Z'})\right|\\
&\phantom{AA}\leq\frac{\varepsilon|\omega_0|+\varepsilon^2\|\overline{Y}_1\|+\varepsilon|\Gamma^0_1({a'}, {Z'})|}{|\Gamma^0_1({a}, {Z})\Gamma^0_1({a'}, {Z'})|}\bigg(\|D\overline{Y}_1\|B^0 \|\overline{r\circ K}\||{a}-{a'}|\\
&\phantom{AAAAAA}+\|D\overline{Y}_1\|\left(2+B^0 |{a'}|\|D(\overline{r\circ K})\|\right)\|{Z}-{Z'}\|\bigg)
\end{split}
\end{equation}

For the third term, similar to what we have done before, we add and subtract terms, then use the mean value theorem to get the estimate
\begin{equation}\label{es00}
\begin{split}
&\left|\overline{Y}_2({Z}(\theta-{a} t),\widetilde{{Z}}(\theta-{a} t;{a}), \varepsilon) - \overline{Y}_2({Z'}(\theta-{a'} t),\widetilde{{Z'}}(\theta-{a'} t;{a'}), \varepsilon)\right|\\
&\phantom{AA}\leq2\|D\overline{Y}_2\|\|Z-Z'\|+2t\|D\overline{Y}_2\|\|D{Z'}\| |{a}-{a'}|+\|D\overline{Y}_2\| \|D{Z'}\| \|r\circ K\||{a}-{a'}|\\
&\phantom{AAAA}+\|D\overline{Y}_2\|\|D{Z'}\| |{a'}|\|D(\overline{r\circ K})\|\|Z-Z'\|\\
&\phantom{AAAA}+t\|D\overline{Y}_2\|\|D{Z'}\|^2 |{a'}|\|D(\overline{r\circ K})\| |{a}-{a'}|\\
&\phantom{AA}\leq \|D\overline{Y}_2\|\big(2+B^0 |{a'}|\|D(\overline{r\circ K})\|\big)\|Z-Z'\|\\
&\phantom{AAAA}+B^0 \|D\overline{Y}_2\|\|\overline{r\circ K}\||{a}-{a'}|+tB^0\|D\overline{Y}_2\|\big(2+B^0 |{a'}|\|D(\overline{r\circ K})\|\big) |{a}-{a'}|.
\end{split}
\end{equation}
Where $\|D\overline{Y}_2\|$ is defined similarly to \eqref{dynorm}. Then,
\begin{equation}\label{R3}
\begin{split}
&\left|\Gamma^0_3({a}, {Z}),-\Gamma^0_3({a'}, {Z'})\right|\\
&\phantom{AA}\leq\varepsilon\|D\overline{Y}_2\|B^0\big(\frac{1}{\lambda_0^2}(2+B^0|{a'}|\|D(\overline{r\circ K})\|)-\frac{\|\overline{r\circ K}\|}{\lambda_0}\big) |{a}-{a'}|\\
&\phantom{AAAA}-\frac{\varepsilon}{\lambda_0}\|D\overline{Y}_2\|\big(2+B^0 |{a'}|\|D(\overline{r\circ K})\|\big)\|Z-Z'\|.
\end{split}
\end{equation}

With above estimates for each terms \eqref{R1}, \eqref{R2}, and \eqref{R3}, we have that for  the distance defined in \eqref{dist}, $d\big(\Gamma^0({a}, {Z}), \Gamma^0({a'}, {Z'})\big)$ is smaller than the sums of the right hand sides of \eqref{R1}, \eqref{R2}, and \eqref{R3}. More precisely,

\[
d\big(\Gamma^0(\omega, {Z}), \Gamma^0(\omega_2, {Z'})\big)\leq c_1 |{a}-{a'}|+c_2 \|Z-Z'\|
\]
Where
\begin{equation*}
\begin{split}
c_1=\varepsilon B^0\|\overline{r\circ K}&\|\left(\|D\overline{Y}_1\|\big(1+\frac{|\omega_0|+\varepsilon\|\overline{Y}_1\|+|\Gamma^0_1({a'}, {Z'})|}{|\Gamma^0_1({a}, {Z})\Gamma^0_1({a'}, {Z'})|}\big)-\frac{\|D\overline{Y}_2\|}{\lambda_0}\right)\\
&+\varepsilon \frac{B^0}{\lambda_0^2} \|D\overline{Y}_2\|\big(2+B^0|{a'}|\|D(\overline{r\circ K})\|\big)
\end{split}
\end{equation*}
and
\[
c_2=\ep\big(2+B^0 |{a'}|\|D(\overline{r\circ K})\|\big)\left(\|D\overline{Y}_1\|\big(1+\frac{|\omega_0|+\varepsilon\|\overline{Y}_1\|+|\Gamma^0_1({a'}, {Z'})|}{|\Gamma^0_1({a}, {Z})\Gamma^0_1({a'}, {Z'})|}\big)-\frac{\|D\overline{Y}_2\|}{\lambda_0}\right).
\]

Since $a$, ${a'}$, $\Gamma^0_1({a}, {Z})$, and $\Gamma^0_1({a'}, {Z'})$ are all in $I^0$, we have 
\begin{equation*}
\begin{split}
c_1\leq\varepsilon B^0\|\overline{r\circ K}&\|\left(\|D\overline{Y}_1\|\big(1+\frac{4|\omega_0|+4\varepsilon\|\overline{Y}_1\|+6|\omega_0|}{|\omega_0|^2}\big)-\frac{\|D\overline{Y}_2\|}{\lambda_0}\right)\\
&+\varepsilon \frac{B^0}{\lambda_0^2} \|D\overline{Y}_2\|\big(2+B^0|{a'}|\|D(\overline{r\circ K})\|\big),
\end{split}
\end{equation*}
and 
\[
c_2\leq \ep\big(2+B^0 |{a'}|\|D(\overline{r\circ K})\|\big)\left(\|D\overline{Y}_1\|\big(1+\frac{4|\omega_0|+4\varepsilon\|\overline{Y}_1\|+6|\omega_0|}{|\omega_0|^2}\big)-\frac{\|D\overline{Y}_2\|}{\lambda_0}\right)
\]
Because $c_1$ and $c_2$ are bounded by $\varepsilon$ multiplied by some constants, they can be made small with $\ep$ small. Therefore, if $\varepsilon$ is sufficiently small, we can find a $\mu_0<1$, such that \eqref{ctr0} is true, we have $\Gamma^0$ a contraction .
\end{proof}

Taking any initial guess $(\omega^0, W^{0,0}(\theta))\in D^0$. For example, one can take $\omega=\omega_0$, $W^{0,0}(\theta)= \left(\begin{smallmatrix}\theta\\0\end{smallmatrix}\right)$. Iterations of this initial guess under $\Gamma^0$ will have a limit by Lemma \ref{lem:contraction0}. Then by Lemma \ref{lem:prob0}, we can apply Lemma \ref{lem:Lan}, then we know that  the limit is in $D^0$. Therefore, we have a fixed point of $\Gamma^0$ in $D^0$, that is, there exist $\omega> 0$ and $W^0$ in $\C^{L+Lip}_0$ such that \eqref{inv0} is solved. Moreover, by the contraction argument, we know that the solution is unique. Therefore, $\omega$ is unique. $W^0$ is unique in the $\C^{L+Lip}_0$ space under the fixed phase $W^0_1(0)=0$. 

To prove the a-posteriori estimation part of Theorem \ref{thm:zero}, using $\Gamma^0$ is a contraction on $D^0$, we know that 
\begin{align}\label{d0}
d\big((\omega^0, W^{0,0}), (\omega, W^0)\big)&=\lim_{k\to\infty}d\big((\omega^0, W^{0,0}), (\Gamma^0)^k(\omega^0, W^{0,0})\big)\nonumber\\
&\leq \sum_{k=0}^{\infty}(\mu_0)^k d\big((\omega^0, W^{0,0}), \Gamma^0(\omega^0, W^{0,0})\big)\nonumber\\
&\leq\frac{1}{1-\mu_0}d\big((\omega^0, W^{0,0}), \Gamma^0(\omega^0, W^{0,0})\big).
\end{align}
It remains to estimate $d\big((\omega^0, W^{0,0}), \Gamma^0(\omega^0, W^{0,0})\big)$ by $\|E^0\|$, where the norm is the maximum norm defined in \eqref{norm}. We have
\begin{equation*}
E^0(\theta)=\omega^0 \frac{d}{d{\theta}}W^{0,0}(\theta)-
\begin{pmatrix}
\omega_0\\ \lambda_0W^{0,0}_2(\theta)
\end{pmatrix}-\varepsilon Y(W^{0,0}(\theta),\widetilde{W}^{0,0}(\theta;\omega^0), \varepsilon),
\end{equation*}
that is,
\begin{equation*}
\begin{pmatrix}
E^0_1(\theta)\\E^0_2(\theta)
\end{pmatrix}=\begin{pmatrix}
\omega^0 \frac{d}{d{\theta}}W^{0,0}_1(\theta)-\omega_0-\varepsilon \overline{Y}_1(W^{0,0}(\theta),\widetilde{W}^{0,0}(\theta;\omega^0), \varepsilon)\\
\omega^0 \frac{d}{d{\theta}}W^{0,0}_2(\theta)-\lambda_0 W^{0,0}_2(\theta)-\varepsilon \overline{Y}_2(W^{0,0}(\theta),\widetilde{W}^{0,0}(\theta;\omega^0), \varepsilon)
\end{pmatrix},
\end{equation*}
and,
\begin{equation*}
\begin{split}
&d\big((\omega^0, W^{0,0}), \Gamma^0(\omega^0, W^{0,0})\big)\\
&\phantom{AA}\leq\left|\omega_0+\varepsilon\int_0^1\overline{Y}_1(W^{0,0}(\theta), \widetilde{W}^{0,0}(\theta;\omega^0), \varepsilon) d\theta- \omega^0\right|\\
&\phantom{AAAA}+\sup_{\theta}\left|\frac{1}{\Gamma^0_1(\omega^0,~W^0)}\big(\omega_0\theta+\varepsilon\int_0^{\theta}\overline{Y}_1(W^{0,0}(\sigma), \widetilde{W}^{0,0}(\sigma;\omega^0), \varepsilon) d\sigma\big)-W^{0,0}_1(\theta)\right|\\
&\phantom{AAAA}+\sup_{\theta}\left|\varepsilon\int_0^{\infty}e^{\lambda_0 t} \overline{Y}_2(W^{0,0}(\theta-\omega^0 t),\widetilde{W}^{0,0}(\theta-\omega^0 t;\omega^0), \varepsilon) dt-W^{0,0}_2(\theta)\right|\\
&\phantom{AA}\leq\left|\int_0^1 E^0_1(\theta)d\theta\right|+\left|\int_0^{\infty}e^{\lambda_0 t}E^0_2(\theta-\omega^0 t)dt\right|\\
&\phantom{AAAA}+\frac{1}{|\Gamma^0_1(\omega^0,~W^0)|}\bigg(\left|\int^{\theta}_0 E^0_1(\sigma)d\sigma\right|+\|W^{0,0}_1\|\left|\int_0^1 E^0_1(\theta)d\theta\right|\bigg)\\
&\phantom{AA}\leq (1+\frac{2B^0}{|\omega_0|})\left|\int_0^1 E^0_1(\theta)d\theta\right|+\frac{2}{|\omega_0|}\left|\int^{\theta}_0 E^0_1(\sigma)d\sigma\right|+\left|\int_0^{\infty}e^{\lambda_0 t}E^0_2(\theta-\omega^0 t)dt\right|
\end{split}
\end{equation*}

For $\theta\in[0,1]$, we have 
\[
d\big((\omega^0, W^{0,0}), \Gamma^0(\omega^0, W^{0,0})\big)\leq \left(1+\frac{2+2B^0}{|\omega_0|}\right)\|E^0_1\|-\frac{1}{\lambda_0}\|E^0_2\|. 
\]
Combine this with the inequality \eqref{d0}, we have 
\begin{equation}\label{err0}
d\big((\omega^0, W^{0,0}), (\omega, W^0)\big)\leq \frac{1}{1-\mu_0}\left[\left(1+\frac{2+2B^0}{|\omega_0|}\right)\|E^0_1\|_{C_0}-\frac{1}{\lambda_0}\|E^0_2\|_{C_0}\right].
\end{equation}
By definition of the norm, \eqref{aprom} and $l=0$ case of \eqref{aprw0} are true for a constant $C$, which depends on $\varepsilon$, $B^0$, $\omega_0$, $\lambda_0$.

For other values of $l$,  one can use interpolation inequality in Lemma \ref{lem:inter}, to get
\begin{equation}\label{interp}
\begin{split}
\|W^{0,0}_1-W^{0}_1\|_{C^l}&\leq c(l,L)\|W^{0,0}_1-W^{0}_1\|_{C^0}^{1-\frac{l}{L}}\|W^{0,0}_1-W^{0}_1\|_{C^L}^{\frac{l}{L}}\\
&\leq c(l,L)\|W^{0,0}_1-W^{0}_1\|_{C^0}^{1-\frac{l}{L}}(2B^0)^{\frac{l}{L}}.
\end{split}
\end{equation}
Similar estimates can be done for the second component, this finishes the proof of the estimations in theorem \ref{thm:zero}.

For solution of the equation \eqref{inv0}, note that $K\circ W^0(\theta+\omega t)$ solves the equation \eqref{SDDE}:
\[
\frac{d}{dt}K\circ W^0(\theta+\omega t)=X(K\circ W^0(\theta+\omega t), K\circ W^0(\theta+\omega (t-r(K\circ W^0(\theta+\omega t))))).
\]
If $W^0$ is L times differentiable, then right hand side of above equation is L times differentiable, so is the left hand side. Using the fact that $K$ is an analytic local diffeomorphism, one can conclude that $W^0$ is (L+1) times differentiable. A bootstrap argument can be used to see $W^0$ is differentiable up to any order.

\subsection{Proof of Theorem \ref{thm:all}}\label{sc:piso}

With Theorem \ref{thm:zero}, $\omega$ and $W^0$ are known to us. To prove Theorem \ref{thm:all}, we have to consider the equations for the first order term, j-th order term, and then higher order term in $s$.  We will obtain $\lambda$, $W^1$ solving the first order equation \eqref{inv1}, $W^j$ solving \eqref{invj}, and then find $W^>$ which solves equation \eqref{invh}.

\subsubsection{First-order Equation}
\label{ssc:pw1}

Recall that for the first order term, we got an invariance equation \eqref{inv1}, see also below:

\[
\omega\frac{d}{d\theta}W^1(\theta)+\lambda W^1(\theta)-\begin{pmatrix}
0\\ \lambda_0W^1_2(\theta)
\end{pmatrix}=\varepsilon \overline{Y}^1(\theta,\lambda, W^0, W^1,\varepsilon),
\]
where
\begin{equation}\label{formy1}
\overline{Y}^1(\theta,\lambda,W^0,W^1,\varepsilon)={A}(\theta)W^1(\theta)+{B}(\theta;\lambda)W^1(\theta-\omega\overline{r\circ K}(W^0(\theta))),
\end{equation}
\begin{align}\label{formA}
&{A}(\theta)=-\omega D_2\overline{Y}(W^0(\theta),\widetilde{W}^0(\theta),\varepsilon)DW^0(\theta-\omega\overline{r\circ K}(W^0(\theta)))D(\overline{r\circ K})(W^0(\theta))\nonumber\\
&\phantom{AAAAAA}+D_1\overline{Y}(W^0(\theta),\widetilde{W}^0(\theta),\varepsilon)
\end{align}
and
\[
{B}(\theta;\lambda)=e^{-\lambda\overline{r\circ K}(W^0(\theta))}D_2\overline{Y}(W^0(\theta),\widetilde{W}^0(\theta),\varepsilon).
\]

Note that in the expression of ${A}$ and ${B}$ above, we suppressed the $\omega$ in the expression of $\widetilde{W}^0$. We do this to simplify the notation, since $\omega$ is already known from Theorem \ref{thm:zero}. 

\begin{remark}
Since $\overline{Y}^1(\theta,\lambda,W^0,W^1,\varepsilon)$, as in \eqref{formy1}, is linear in $W^1$, equation \eqref{inv1} for $W^1$, is linear and homogenous in $W^1$. Hence if $W^1(\theta)$ solves \eqref{inv1}, so does any scalar multiple of $W^1(\theta)$. 
\end{remark}

Componentwise, we have the following two equations:
\begin{align}
&\omega\frac{d}{d\theta}W^1_1(\theta)+\lambda W^1_1(\theta)=\varepsilon \overline{Y}^1_1(\theta,\lambda,W^0,W^1,\varepsilon),\label{inv11}
\\
&\omega\frac{d}{d\theta}W^1_2(\theta)+(\lambda-\lambda_0)W^1_2(\theta)=\varepsilon \overline{Y}^1_2(\theta,\lambda,W^0,W^1,\varepsilon).\label{inv12}
\end{align}

As already pointed out, for the unperturbed case, $W$ could be chosen as the identity map. So after we add a small perturbation, $W^1(\theta)\approx \left(\begin{smallmatrix}0\\1\end{smallmatrix}\right)$. We will be able to find a unique $W^1$ close to $\left(\begin{smallmatrix}0\\1\end{smallmatrix}\right)$ solving above equation \eqref{inv1}, by considering the following normalization:
\begin{equation}\label{normal}
\int^1_0W^1_2(\theta)d\theta=1.
\end{equation}

\begin{remark}
It is natural to choose above normalization \eqref{normal}, since under small perturbation, we have $W^1(\theta)\approx \left(\begin{smallmatrix}0\\1\end{smallmatrix}\right)$. Meanwhile, we believe that $\lambda$ does not depend on the choice of normalization as long as $\int^1_0W^1_2(\theta)d\theta\neq 0$.
\end{remark}

From now on, since $W^0$ is already known to us, we will omit $W^0$ from $\overline{Y}^1(\theta,\lambda,W^0,W^1,\varepsilon)$, and denote it as $\overline{Y}^1(\theta,\lambda,W^1,\varepsilon)$. We define an operator $\Gamma^1$ as follows:
\begin{equation}\label{op1}
\begin{split}
\Gamma^1\begin{pmatrix}
{b}\\
{F}_1 \\
{F}_2
\end{pmatrix}(\theta)&=
\begin{pmatrix}
\Gamma^1_1({b},{F})\\
\Gamma^1_2({b},{F})(\theta)\\
\Gamma^1_3({b},{F})(\theta)
\end{pmatrix}\\&=\begin{pmatrix}
\lambda_0+\varepsilon\int^1_0\overline{Y}^1_2(\theta,{b},{F},\varepsilon)d\theta\\
-\varepsilon\int^{\infty}_0 e^{bt}\overline{Y}^1_1(\theta+\omega t,{b},F,\varepsilon)dt\\
C(b, F)+\frac{\varepsilon}{\omega}\int^{\theta}_0 \overline{Y}^1_2(\sigma,{b},{F,}\varepsilon)-(\int^1_0\overline{Y}^1_2(\theta,{b},{F},\varepsilon)d\theta) F_2(\sigma)d\sigma
\end{pmatrix},
\end{split}
\end{equation}
where 
\begin{equation}
\begin{split}
C(b,F)&=1-\frac{\varepsilon}{\omega}\int^1_0\int^{\theta}_0 \overline{Y}^1_2(\sigma,b,F,\varepsilon)d\sigma d\theta\\
&\phantom{AA}+\frac{\varepsilon}{\omega}(\int^1_0\overline{Y}^1_2(\theta,b,F,\varepsilon)d\theta) \int^1_0\int^{\theta}_0 F_2(\sigma)d\sigma d\theta
\end{split}
\end{equation}
is a constant chosen to ensure that $\Gamma^1_3({b},{F})$ also satisfies the normalization condition \eqref{normal}, i.e. $\int^1_0 \Gamma^1_3({b},{F})(\theta)d\theta=1$.

Similar to previous section, section \ref{sc:plc}, for the domain of $\Gamma^1$, we consider a closed interval $I^1=\{{b}: |{b}-\lambda_0|\leq\frac{|\lambda_0|}{3}\}$, as well as the function space 
 \begin{equation}
 \begin{split}
\C^{L-1+Lip}_1=\{f\ |\ f:&{\T}\to{\T}\times\R,~f\text{ can be lifted to a function from } \R \text{ to } \R^2, \nonumber\\ 
&\text{still denoted as }f, \text{which satisfies }f(\theta+1)=f(\theta),\\&\|f\|_{L-1+Lip}\leq B^1, \text{and} \int^1_0 f_2(\theta)d\theta=1\},
\end{split}
\end{equation}
where 
\[
\|f\|_{L-1+Lip}=\max_{i=1,2, k=0,\dotsc,L-1}\{\sup_{\theta\in[0,1]}\|f^{(k)}_i(\theta)\|,Lip(f^{(L-1)}_i)\}.
\]
Where $L$ is as in Theorem \ref{thm:zero}, and $B^1$ is a positive constant.

Define $D^1=I^1\times \C^{L-1+Lip}_1$, then $\Gamma^1$ is defined on $D^1$. We have the following:

\begin{lemma}\label{lem:prob1}
If $\varepsilon$ is small enough, $\Gamma^1(D^1)\subset D^1$.
\end{lemma}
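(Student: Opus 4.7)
The plan is to verify, one constraint at a time, that $\Gamma^1$ sends $D^1 = I^1 \times \C^{L-1+Lip}_1$ into itself. Two preliminary facts drive the argument. First, for $b \in I^1$ we have $b \le \tfrac{2}{3}\lambda_0 < 0$, so Laplace-type tails $\int_0^\infty e^{bt} P(t)\, dt$ and their $\theta$-derivatives converge, with bound essentially $1/|b| \le 3/|\lambda_0|$. Second, since $\overline{Y}^1(\theta, b, F, \varepsilon) = A(\theta) F(\theta) + B(\theta; b) F(\theta - \omega \overline{r\circ K}(W^0(\theta)))$ is linear in $F$, and $A, B, W^0, \overline{r\circ K}, \overline{Y}$ all have uniformly bounded derivatives, the function $\overline{Y}^1$ inherits a $C^{L-1+Lip}$ norm bounded by some $M(B^1)$ uniformly on $D^1$.

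For $\Gamma^1_1$, I would bound $|\Gamma^1_1(b,F) - \lambda_0| \le \varepsilon \|\overline{Y}^1_2\|_{C^0} \le \varepsilon\, M(B^1)$, which is below $|\lambda_0|/3$ once $\varepsilon$ is small. For the periodicity of $\Gamma^1_2$ and $\Gamma^1_3$: since $W^0(\theta+1) = W^0(\theta) + (1,0)^T$ represents the same point on $\T \times \R$ and $F$ is $1$-periodic, the integrand $\overline{Y}^1(\theta, b, F, \varepsilon)$ is $1$-periodic in $\theta$; a change of variables in the tail integral yields periodicity of $\Gamma^1_2$, and for $\Gamma^1_3$ the difference $\Gamma^1_3(b,F)(\theta+1) - \Gamma^1_3(b,F)(\theta)$ equals $\frac{\varepsilon}{\omega}\bigl(\int_0^1 \overline{Y}^1_2\, d\sigma\bigr)\bigl(1 - \int_0^1 F_2\, d\sigma\bigr) = 0$, using the normalization condition built into $\C^{L-1+Lip}_1$. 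The identity $\int_0^1 \Gamma^1_3(b,F)(\theta)\, d\theta = 1$ is the one-line computation for which the constant $C(b,F)$ was defined in the first place: it absorbs the two surviving $\theta$-antiderivative terms.

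The main analytic step is the $C^{L-1+Lip}$ norm bound. Differentiating under the integral sign and applying Faà di Bruno (Lemma \ref{faa}) to the compositions $\theta \mapsto \overline{Y}^1_i(\theta + \omega t, b, F, \varepsilon)$ produces polynomial expressions in the derivatives of $\overline{Y}, W^0, F, \overline{r \circ K}$ up to order $L-1$, each uniformly bounded on $D^1$ by a constant depending only on $B^1$ and the a priori bounds. The factor $e^{bt}$ with $b < 0$ keeps the $t$-integral in $\Gamma^1_2$ convergent together with all its $\theta$-derivatives, and the Lipschitz bound on the $(L-1)$-th derivative follows from the same scheme applied one order higher, using the $L$-th derivative Lipschitz bound on $W^0$ from Theorem~\ref{thm:zero}. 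The upshot is $\|\Gamma^1_2(b,F)\|_{L-1+Lip} \le \varepsilon\, M(B^1)$ and $\|\Gamma^1_3(b,F) - 1\|_{L-1+Lip} \le \varepsilon\, M(B^1)$, so that after choosing $B^1 \ge 2$ and then $\varepsilon$ sufficiently small depending on $B^1$, both components lie in the radius-$B^1$ ball.

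The only anticipated obstacle is bookkeeping: one must track how the implicit constants depend on $B^1$ and fix $B^1$ before choosing $\varepsilon$, so that the closed-ball hypothesis remains compatible with the smallness conditions that will be needed for the contraction argument in the next lemma.
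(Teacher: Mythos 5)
Your proposal is correct and follows essentially the same route as the paper: verify $\Gamma^1_1 \in I^1$ from boundedness of $\overline{Y}^1_2$, establish periodicity of $\Gamma^1_2$ and $\Gamma^1_3$ from periodicity of $\overline{Y}^1$ and the normalization $\int_0^1 F_2 = 1$ (your direct computation of the difference $\Gamma^1_3(\theta+1)-\Gamma^1_3(\theta)$ is a cosmetic variant of the paper's argument via the derivative and endpoint equality), and control the $C^{L-1+\mathrm{Lip}}$ norm by differentiating under the integral and invoking Fa\`a di Bruno together with the uniform bounds on $\overline{Y}$, $\overline{r\circ K}$, $W^0$, and $F$, with the $e^{bt}$ factor ($b<0$ on $I^1$) guaranteeing convergence. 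No gap; the bookkeeping you flag at the end is exactly what the paper also tracks, namely that the smallness of $\varepsilon$ depends on $B^0$, $B^1$ and the derivative bounds of $\overline{Y}$, $\overline{r\circ K}$, but not on the particular $(b,F)\in D^1$.
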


\begin{proof}
Since $\overline{Y}^1_2(\theta,{b},{F},\varepsilon)$ is bounded, for small $\varepsilon$,  we have $\Gamma^1_1({b},{F})\in I^1$.

Now consider $\Gamma^1_2({b},{F})(\theta)$, we first have to show that
\[
\Gamma^1_2({b},{F})(\theta+1)=\Gamma^1_2({b},{F})(\theta).
\]
This follows from the fact that $\overline{Y}^1_1(\theta+1,{b},F,\varepsilon)=\overline{Y}^1_1(\theta,{b},F,\varepsilon)$, which is true by periodicity of $W^0$ as in equation \eqref{pc0}, of $F$, and of $\overline{r\circ K}$ with respect to its first component. 

Now we check $\frac{d^n}{d\theta^n}\Gamma^1_2({b},{F})(\theta)$, $0\leq n\leq L-1$, is bounded. Notice that
\[
\frac{d^n}{d\theta^n}\Gamma^1_2({b},{F})(\theta)=-\ep\int^{\infty}_0e^{bt}\frac{\partial^n}{\partial\theta^n}\overline{Y}^1_1(\theta+\omega t,{b},F,\varepsilon)dt.
\]
 By dominated convergence theorem, it suffices to check that $\frac{\partial^n}{\partial\theta^n}\overline{Y}^1_1(\theta+\omega t,{b},F,\varepsilon)$ is bounded. If one uses Faà di Bruno's formula, as in Lemma \ref{faa},  boundedness of $\frac{\partial^n}{\partial\theta^n}\overline{Y}^1_1(\theta+\omega t,{b},F,\varepsilon)$ is ensured by assumptions on  $\overline{Y}$, $\overline{r\circ K}$, and $W^0(\theta)$, as well as $F\in \C^{L-1+Lip}_1$. Then for $\ep$ small enough, the derivatives could be bounded by $B^1$. Bound for Lipschitz constant of $\frac{d^{L-1}}{d\theta^{L-1}}\Gamma^1_2({b},{F})(\theta)$ also follows.
 
For $\Gamma^1_3({b},{F})(\theta)$, we will first show that it is periodic. Notice that
\begin{equation}\label{derivT3}
\frac{d}{d\theta}\Gamma^1_3({b},{F})(\theta)=\frac{\ep}{\omega}\overline{Y}^1_2(\theta,{b},{F},\varepsilon)-\frac{\ep}{\omega}\left(\int^1_0\overline{Y}^1_2(\theta,{b},{F},\varepsilon)d\theta\right) F_2(\theta)
\end{equation}
is periodic. Hence, to show periodicity of $\Gamma^1_3({b},{F})(\theta)$, it suffices to see that $\Gamma^1_3({b},{F})(0)=\Gamma^1_3({b},{F})(1)$, which is true because $\int^1_0F_2(\theta)d\theta=1$. The choice of the constant $C(b,F)$ ensures that the normalization condition $\int^1_0\Gamma^1_3({b},{F})(\theta)d\theta=1$ is also verified.

Take derivatives of \eqref{derivT3}, we have for $2\leq n\leq L-1$
\[
\frac{d^n}{d\theta^n}\Gamma^1_3({b},{F})(\theta)=\frac{\ep}{\omega}\bigg(\frac{d^{(n-1)}}{d\theta^{(n-1)}}\overline{Y}^1_2(\theta,{b},{F},\varepsilon)-\left(\int^1_0\overline{Y}^1_2(\theta,{b},{F},\varepsilon)d\theta\right)\frac{d^{(n-1)}}{d\theta^{(n-1)}}F_2(\theta)\bigg),
\]
which will be $\frac{\ep}{\omega}$ multiplied by bounded functions due to the assumptions on  $\overline{Y}$, $\overline{r\circ K}$, and $W^0(\theta)$, as well as $F\in \C^{L-1+Lip}_1$. When $\ep$ is small, they could all be bounded by $B^1$. Similar for Lipschitz constant of $\frac{d^{L-1}}{d\theta^{L-1}}\Gamma^1_3({b},{F})(\theta)$. 

Hence for $\varepsilon$ small enough, where the smallness condition depends on bounds of the derivatives of $\overline{Y}$, $\overline{r\circ K}$, $B^0$, and $B^1$, but not on the specific choice of $(b,F)\in D^1$, we have that $(\Gamma^1_2({b},{F}), \Gamma^1_3({b},{F}))\in \C^{L-1+Lip}_1$. This finishes the proof.
\end{proof}

Recall the distance introduced in \eqref{dist}:
\[
d(({a}, {Z}), ({a'}, {Z'}))=|{a}-{a'}|+\|Z-Z'\|,
\]
where
\[
\|Z-Z'\|=\max\left\{\sup_{\theta}|{Z}_1(\theta)-{Z'_1}(\theta)|, \sup_{\theta}|{Z}_2(\theta)-{Z'_2}(\theta)|\right\}.
\]
\begin{lemma}\label{lem:contraction1}
Under above defined distance on $D^1$, for small enough $\varepsilon$, $\Gamma^1$ is a contraction.
\end{lemma}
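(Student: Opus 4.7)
The plan is to mirror the contraction argument of Lemma \ref{lem:contraction0}, estimating each of the three components $\Gamma^1_i$ separately and showing that in each case the difference is controlled by $\varepsilon$ times a constant (depending on $\omega_0$, $\lambda_0$, $B^0$, $B^1$, and bounds on derivatives of $\overline{Y}$ and $\overline{r\circ K}$) multiplying the distance. Two structural features make this work: the factor of $\varepsilon$ appearing in front of every non-linear part of $\Gamma^1$, and the linearity of $\overline{Y}^1(\theta,b,F,\varepsilon) = A(\theta)F(\theta) + B(\theta;b) F(\theta-\omega\overline{r\circ K}(W^0(\theta)))$ in $F$, which makes it trivially Lipschitz in $F$ (with respect to the $C^0$ norm) on the bounded set $\C^{L-1+Lip}_1$. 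The $b$-dependence enters only through $B(\theta;b)=e^{-b\,\overline{r\circ K}(W^0(\theta))}D_2\overline{Y}$, which is smooth in $b$ with bounded derivatives because $\overline{r\circ K}$ is bounded and $b$ ranges over the compact interval $I^1$.

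For $\Gamma^1_1$, I would add and subtract an intermediate term $\overline{Y}^1_2(\theta,b,F',\varepsilon)$ and use the Lipschitz estimates above, integrating in $\theta\in[0,1]$, to get $|\Gamma^1_1(b,F)-\Gamma^1_1(b',F')|\le \varepsilon C_1\big(|b-b'|+\|F-F'\|\big)$. For $\Gamma^1_3$, both the constant $C(b,F)$ and the $\theta$-integral are finite integrals of quantities Lipschitz in $(b,F)$ (for $F_2$, the normalization $\int_0^1 F_2=1$ is built in, so differences $F_2-F'_2$ have mean zero but that is not needed here), so the same telescoping yields an $\varepsilon C_3\big(|b-b'|+\|F-F'\|\big)$ bound.

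The main obstacle is $\Gamma^1_2$, which involves an improper integral with the weight $e^{bt}$. The crucial observation is that on $I^1$ we have $b\le \tfrac{2\lambda_0}{3}<0$ uniformly, so $e^{bt}$ decays exponentially at a rate uniformly bounded away from zero. I would write
\begin{equation*}
\Gamma^1_2(b,F)(\theta)-\Gamma^1_2(b',F')(\theta)
= -\varepsilon\!\int_0^\infty\! e^{bt}\bigl[\overline{Y}^1_1(\theta+\omega t,b,F,\varepsilon)-\overline{Y}^1_1(\theta+\omega t,b',F',\varepsilon)\bigr]dt
 -\varepsilon\!\int_0^\infty\! (e^{bt}-e^{b't})\overline{Y}^1_1(\theta+\omega t,b',F',\varepsilon)\,dt.
\end{equation*}
The first integrand is bounded in absolute value by a constant times $|b-b'|+\|F-F'\|$ (Lipschitz estimates on $B(\theta;b)$ and linearity in $F$), so the first integral contributes $\varepsilon\cdot\tfrac{C}{|b|}\big(|b-b'|+\|F-F'\|\big)$. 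For the second, the mean value theorem gives $|e^{bt}-e^{b't}|\le t e^{\xi t}|b-b'|$ with $\xi\le \tfrac{2\lambda_0}{3}$, and since $\overline{Y}^1_1$ is uniformly bounded on $D^1$, $\int_0^\infty t e^{\xi t}dt = 1/\xi^2$ is finite, producing an $\varepsilon C|b-b'|$ term.

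Adding the three estimates yields $d(\Gamma^1(b,F),\Gamma^1(b',F'))\le \varepsilon C \, d((b,F),(b',F'))$ with a constant $C$ independent of $(b,F),(b',F')\in D^1$, so choosing $\varepsilon$ smaller than $1/C$ (and smaller than the $\varepsilon$ of Lemma \ref{lem:prob1}) gives a contraction constant $\mu_1=\varepsilon C<1$. The extra smallness condition on $\varepsilon$ is then added to the finite list of smallness conditions, preserving the statement of the lemma.
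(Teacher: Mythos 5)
Your proposal is correct and mirrors the paper's own proof: in both cases one estimates each component $\Gamma^1_i$ (and $C(b,F)$) separately, exploits the linearity of $\overline{Y}^1$ in $F$ and the Lipschitz dependence of $B(\theta;b)$ on $b$, and uses that $b\le \tfrac{2\lambda_0}{3}<0$ on $I^1$ to control the improper integrals with weights $e^{bt}$ and $te^{bt}$, arriving at $d(\Gamma^1(b,F),\Gamma^1(b',F'))\le\varepsilon C\,d((b,F),(b',F'))$. The only cosmetic difference is that the paper carries out the decomposition of the $\Gamma^1_2$ difference implicitly inside the $\overline{Y}^1_1$ estimate rather than splitting the integral explicitly as you do.
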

\begin{proof}
We will show that for $\varepsilon$ small enough, we can find a constant $0<\mu_1<1$ such that
\begin{equation}\label{ctr1}
d(\Gamma^1({b}, {F}), \Gamma^1({b'}, {F'}))<\mu_1 d(({b}, {F}), ({b'}, {F'})).
\end{equation}

Note that 
\begin{align}\label{dis1}
\begin{split}
d(&\Gamma^1({b}, {F}), \Gamma^1({b'}, {F'}))\\
&\leq\varepsilon\left|\int^1_0\overline{Y}^1_2(\theta,{b},{F},\varepsilon)-\overline{Y}^1_2(\theta,{b'},{F'},\varepsilon)d\theta\right|\\
&\phantom{A}+\varepsilon\sup_{\theta}\left|\int^{\infty}_0 e^{bt}\overline{Y}^1_1(\theta+\omega t,{b},F, \varepsilon)- e^{b't}\overline{Y}^1_1(\theta+\omega t,{b'},F',\varepsilon)dt\right|\\
&\phantom{A}+\frac{\varepsilon}{|\omega|}\sup_{\theta}\left|\int^{\theta}_0 \overline{Y}^1_2(\sigma,{b},{F}, \varepsilon)-\big(\int^1_0\overline{Y}^1_2(\theta,{b},{F},\varepsilon)d\theta\big) F_2(\sigma)d\sigma\right.\\
&\left.\phantom{AAAAAAAAA}-\int^{\theta}_0 \overline{Y}^1_2(\sigma,{b'},{F'}, \varepsilon)+\big(\int^1_0\overline{Y}^1_2(\theta,{b'},F',\varepsilon)d\theta\big) F'_2(\sigma)d\sigma\right|\\
&\phantom{A}+|C(F,b)-C(F',b')|
\end{split}
\end{align}

As before, we will consider each term of the right hand side of the above inequality \eqref{dis1}.

Recall that $\overline{Y}^1$ has the form, \eqref{formy1}
\[
\overline{Y}^1(\theta,\lambda,W^1,\varepsilon)={A}(\theta)W^1(\theta)+{B}(\theta;\lambda)W^1(\theta-\omega\overline{r\circ K}(W^0(\theta))).
\]
If we use notation:
\[
A(\theta)= \begin{pmatrix}
 A_{11}(\theta)&A_{12}(\theta)\\
 A_{21}(\theta)&A_{22}(\theta)
 \end{pmatrix},~~~~~~~~B(\theta;\lambda)= \begin{pmatrix}
 B_{11}(\theta;\lambda)&B_{12}(\theta;\lambda)\\
 B_{21}(\theta;\lambda)&B_{22}(\theta;\lambda)\end{pmatrix},
\]
 then 
 \begin{equation*}
 \begin{split}
\overline{Y}^1_1(\theta,\lambda,W^1,\varepsilon)=&A_{11}(\theta)W^1_1(\theta)+A_{12}(\theta)W^1_2(\theta)\\&+B_{11}(\theta;\lambda)W^1_1(\theta-\omega \overline{r\circ K}(W^0(\theta)))\\&+B_{12}(\theta;\lambda)W^1_2(\theta-\omega \overline{r\circ K}(W^0(\theta))),
 \end{split}
 \end{equation*}
 and
 \begin{equation*}
 \begin{split}
\overline{Y}^1_2(\theta,\lambda,W^1,\varepsilon)=&A_{21}(\theta)W^1_1(\theta)+A_{22}(\theta)W^1_2(\theta)\\&+B_{21}(\theta;\lambda)W^1_1(\theta-\omega \overline{r\circ K}(W^0(\theta)))\\&+B_{22}(\theta;\lambda)W^1_2(\theta-\omega \overline{r\circ K}(W^0(\theta))).
 \end{split}
 \end{equation*}

We estimate 
\begin{align*}
&|B(\theta;b)|\leq e^{-\frac{4}{3}\lambda_0\|\overline{r\circ K}\|}\|D_2\overline{Y}\|,\\
\intertext{and}
&|B(\theta;b)-B(\theta;b')|\leq \|D_2\overline{Y}\| e^{-\frac{4}{3}\lambda_0\|\overline{r\circ K}\|}\|\overline{r\circ K}\||b-b'|.
\end{align*}

Also, if we define $\|A\| = \max_{\theta}\|A(\theta)\|$, where $\|A(\theta)\|$ is the operator norm corresponding to the maximum norm $\|\cdot\|$ defined as in equation \eqref{norm}. Then,
\begin{equation*} 
\begin{split}
&|\overline{Y}^1_1(\theta,{b},{F},\varepsilon)-\overline{Y}^1_1(\theta,{b'},{F'},\varepsilon)|\\
&\phantom{A}\leq \|A\|\|F-F'\|+\|B(\theta;b)\|\|F-F'\|+\|B(\theta;b)-B(\theta;b')\|\|F'\|\\
&\phantom{A}\leq (\|A\|+e^{-\frac{4}{3}\lambda_0\|\overline{r\circ K}\|}\|D_2\overline{Y}\|)\|F-F'\|+B^1 \|D_2\overline{Y}\| e^{-\frac{4}{3}\lambda_0\|\overline{r\circ K}\|}\|\overline{r\circ K}\||b-b'|,
\end{split}
\end{equation*}
and similarly, 
\begin{equation*} 
\begin{split}
&|\overline{Y}^1_2(\theta,{b},{F},\varepsilon)-\overline{Y}^1_2(\theta,{b'},{F'},\varepsilon)|\\
&\phantom{A}\leq (\|A\|+e^{-\frac{4}{3}\lambda_0\|\overline{r\circ K}\|}\|D_2\overline{Y}\|)\|F-F'\|+B^1 \|D_2\overline{Y}\| e^{-\frac{4}{3}\lambda_0\|\overline{r\circ K}\|}\|\overline{r\circ K}\||b-b'|.
\end{split}
\end{equation*}

Note also that 
\begin{equation*}
|\overline{Y}^1_1(\theta,{b},{F},\varepsilon)|\leq B^1(\|A\|+e^{-\frac{4}{3}\lambda_0\|\overline{r\circ K}\|}\|D_2\overline{Y}\|),
\end{equation*}
similarly,
\begin{equation*}
|\overline{Y}^1_2(\theta,{b},{F},\varepsilon)|\leq B^1(\|A\|+e^{-\frac{4}{3}\lambda_0\|\overline{r\circ K}\|}\|D_2\overline{Y}\|).
\end{equation*}

Now for the first term in \eqref{dis1}, we have 
\begin{align*}
\left|\Gamma^1_1({b}, {F})-\Gamma^1_1({b'}, {F'})\right|&\leq
\varepsilon(\|A\|+e^{-\frac{4}{3}\lambda_0\|\overline{r\circ K}\|}\|D_2\overline{Y}\|)\|F-F'\|\\
&\phantom{AA}+\varepsilon B^1 \|D_2\overline{Y}\| e^{-\frac{4}{3}\lambda_0\|\overline{r\circ K}\|}\|\overline{r\circ K}\||b-b'|.
\end{align*}

For the second term in \eqref{dis1}, we have for all $\theta$,
\begin{align*}
&\left|\Gamma^1_2({b}, {F})-\Gamma^1_2({b'}, {F'})\right|\leq\\
&\phantom{AA}-\frac{3\varepsilon}{2\lambda_0}(\|A\|+e^{-\frac{4}{3}\lambda_0\|\overline{r\circ K}\|}\|D_2\overline{Y}\|)\|F-F'\|\\
&\phantom{AA}-\frac{3B^1\ep}{2\lambda_0}\left(e^{-\frac{4}{3}\lambda_0\|\overline{r\circ K}\|}\|D_2\overline{Y}\|\big(\|\overline{r\circ K}\|-\frac{3}{2\lambda_0}\big)-\frac{3}{2\lambda_0}\|A\|\right)|b-b'|
\end{align*}

For the third term in \eqref{dis1}, we have
\begin{align*}
\left|\Gamma^1_3({b}, {F})-\Gamma^1_3({b'}, {F'})\right|
\leq& \frac{\varepsilon}{|\omega|}(1+2B^1)(\|A\|+e^{-\frac{4}{3}\lambda_0\|\overline{r\circ K}\|}\|D_2\overline{Y}\|)\|F-F'\|\\
&+\frac{B^1\ep}{|\omega|}(1+B^1)\|D_2\overline{Y}\| e^{-\frac{4}{3}\lambda_0\|\overline{r\circ K}\|}\|\overline{r\circ K}\||b-b'|
\end{align*}

Similar holds for the last part in \eqref{dis1},
\begin{align*}
|C(F,b)-C(F',b')|
\leq& \frac{\varepsilon}{|\omega|}(1+2B^1)(\|A\|+e^{-\frac{4}{3}\lambda_0\|\overline{r\circ K}\|}\|D_2\overline{Y}\|)\|F-F'\|\\
&+\frac{B^1\ep}{|\omega|}(1+B^1)\|D_2\overline{Y}\| e^{-\frac{4}{3}\lambda_0\|\overline{r\circ K}\|}\|\overline{r\circ K}\||b-b'|
\end{align*}

Combine all the estimations above, we can find constants $c_1$, $c_2$ such that, \[
d(\Gamma^1({b}, {F}), \Gamma^1({b'}, {F'}))\leq \ep(c_1|b-b'|+c_2\|F-F'\|).
\]
Therefore, for small enough $\varepsilon$, we will have a contraction, so that we can find a $\mu_1$ such that equation \eqref{ctr1} is true.
\end{proof}

Taking any initial guess $(\lambda^0,W^{1,0})\in D^1$, we could take $\lambda^0=\lambda_0$ and $W^{1,0}(\theta)= \left(\begin{smallmatrix}0\\1\end{smallmatrix}\right)$, the sequence $(\Gamma^1)^n(\lambda^0, W^{1,0})$ has a limit in $D^1$, we denote it by $(\lambda, W^1)$. $(\lambda, W^1)$ is a fixed point of operator $\Gamma^1$, hence it solves equation \eqref{inv1}. Since the operator is a contraction, $\lambda$ is unique, $W^1$ is unique in $C^0$ sense under the normalization condition \eqref{normal}.

Similar to what we have done in estimation \eqref{d0} in section \ref{sc:plc}, notice that
\begin{equation}\label{d1}
d\big((\lambda^0,W^{1,0}),(\lambda, W^1)\big)\leq \frac{1}{1-\mu_1}d\big((\lambda^0,W^{1,0}),\Gamma^1(\lambda^0,W^{1,0})\big).
\end{equation}

We will estimate $d\big((\lambda^0,W^{1,0}),\Gamma^1(\lambda^0,W^{1,0})\big)$ by $\|E^1\|$. If we write $E^1(\theta)$ in matrix form, we have
\begin{equation*}
\begin{pmatrix}
E^1_1(\theta)\\
E^1_2(\theta)
\end{pmatrix}=\begin{pmatrix}
\omega\frac{d}{d\theta}W^{1,0}_1(\theta)+\lambda^0 W^{1,0}_1(\theta)-\varepsilon \overline{Y}^1_1(\theta,\lambda^0,W^{1,0},\varepsilon)\\
\omega\frac{d}{d\theta}W^{1,0}_2(\theta)+(\lambda^0-\lambda_0)W^{1,0}_2(\theta)-\varepsilon \overline{Y}^1_2(\theta,\lambda^0,W^{1,0},\varepsilon)
\end{pmatrix}.
\end{equation*}

Therefore,
\begin{align*}
d\big((\lambda^0,W^{1,0}), &\Gamma^1(\lambda^0,W^{1,0})\big)\\
\leq&|\lambda_0+\varepsilon\int^1_0\overline{Y}^1_2(\theta,{\lambda^0},{W^{1,0}},\varepsilon)d\theta-\lambda^0|\\
&+\sup_{\theta}\left|W^{1,0}_1(\theta)+\varepsilon\int^{\infty}_0 e^{\lambda^0t}\overline{Y}^1_1(\theta+\omega t,{\lambda^0},W^{1,0},\varepsilon)dt\right|\\
&+\sup_{\theta}\left|C(\lambda^0, W^{1,0})+\frac{\varepsilon}{\omega}\int^{\theta}_0 \overline{Y}^1_2(\sigma,{\lambda^0},{W^{1,0},}\varepsilon)\right.\\
&\phantom{AAAAA}\left.-\left(\int^1_0\overline{Y}^1_2(\theta,{\lambda^0},{W^{1,0}},\varepsilon)d\theta\right)W^{1,0}_2(\sigma)d\sigma-W^{1,0}_2(\theta)\right|\\
\leq& \left|\int_0^1E^1_2(\theta)d\theta\right|+\left|\int^{\infty}_0 e^{\lambda^0t}E^1_1(\theta+\omega t)dt\right|+\frac{2+2B^1}{|\omega|}\|E^1_2\|\\
\leq& \frac{1}{|\lambda^0|}\|E^1_1\|+\left(1+\frac{2+2B^1}{|\omega|}\right)\|E^1_2\|\\
\leq& \frac{3}{2|\lambda_0|}\|E^1_1\|+\left(1+\frac{2+2B^1}{|\omega|}\right)\|E^1_2\|.
\end{align*}

Then
\begin{equation}\label{err1}
d\big((\lambda^0,W^{1,0}),(\lambda, W^1)\big)\leq\frac{1}{1-\mu_1} \left[\frac{3}{2|\lambda_0|}\|E^1_1\|+\left(1+\frac{2+2B^1}{|\omega|}\right)\|E^1_2\|\right].
\end{equation}

Therefore, we can find a constant $C$, depending on $\varepsilon$, $B^1$, $\omega$ and $\lambda_0$ such that $|\lambda-\lambda^0|\leq C\|E^1\|$. This proves \eqref{aprlam}.

\subsubsection{Equation for jth order terms}\label{ssc:pwj}

For each $j\geq 2$, we can proceed in a similar manner to find $W^j$. With $\omega$, $\lambda$, $W^0$, and $W^1$ known, Equations for $W^j$'s are easier to analyze. 
\begin{remark}
As we will see, for theoretical result, we can stop at order 1 and start to deal with the higher order term. We include here the discussion for $W^j$'s for numerical interests.
\end{remark}
Assume now that we have already obtained $W^0,\dotsc,W^{j-1}$, and $\omega$, $\lambda$, we are going to find $W^j(\theta)$. To obtain the invariance equation satisfied by $W^j$, which was in equation \eqref{invj}. We consider the j-th order terms in the equation \eqref{invv}. 
Note that the coefficient for $s^j$ in $\widetilde{W}(\theta,s)$, is
\[
-\omega DW^0(\theta-\omega \overline{r\circ K}(W^0(\theta))D(\overline{r\circ K})(W^0(\theta))W^j(\theta)
\]
Therefore, $\overline{Y}^j$ is of the form:
\begin{equation}\label{formyj}
\overline{Y}^j(\theta,W^0,W^j,\ep)=A(\theta)W^j(\theta),
\end{equation}
where $A(\theta)$ is the same as in \eqref{formA},
\begin{align*}
A(\theta)=-\omega& D_2\overline{Y}(W^0(\theta),\widetilde{W}(\theta),\ep)DW^0(\theta-\omega \overline{r\circ K}(W^0(\theta))D(\overline{r\circ K})(W^0(\theta))\\
&+D_1\overline{Y}(W^0(\theta),\widetilde{W}(\theta),\ep).
\end{align*}

We also note that $R^j(\theta)$ will be some expression in the derivatives of $\overline{Y}$ evaluated at $(W^0(\theta),\widetilde{W}(\theta),\ep)$, multiplied with $W^0,\dotsc,W^{j-1}$. Therefore, $R^j(\theta)$ will have the same regularity as $W^{j-1}$. We will see inductively by the following argument that $W^j$ is $(L-1)$ times differentiable with $(L-1)$-th derivative Lipschitz.

From now on, we will write $\overline{Y}^j$ as $\overline{Y}^j(\theta,W^j,\ep)$, for that $\lambda$ and $W^0$ are known to us. Componentwisely, $W^j$ should satisfy
\begin{align}
&\omega\frac{d}{d\theta}W^j_1(\theta)+\lambda jW^j_1(\theta)=\ep\overline{Y}^j_1(\theta,W^j,\ep)+R^j_1(\theta),\label{invj1}\\
&\omega\frac{d}{d\theta}W^j_2(\theta)+(\lambda j-\lambda_0)W^j_2(\theta)=\ep\overline{Y}^j_2(\theta,W^j,\ep)+R^j_2(\theta)\label{invj2}.
\end{align}

For functions in the space
 \begin{equation}
 \begin{split}
\C^{L-1+Lip}_j=\{f\ |\ f:&{\T}\to{\T}\times\R,~f\text{ can be lifted to a function from } \R \text{ to } \R^2, \nonumber\\ 
&\text{still denoted as }f, \text{which satisfies }f(\theta+1)=f(\theta),\\&\|f\|_{L-1+Lip}\leq B^j\},
\end{split}
\end{equation}
where 
\[
\|f\|_{L-1+Lip}=\max_{i=1,2, k=0,\dotsc,L-1}\{\sup_{\theta\in[0,1]}\|f^{(k)}_i(\theta)\|,Lip(f^{(L-1)}_i)\}.
\]

Similar to what we have done above, define an operator on space $\C^{L-1+Lip}_j$
\begin{equation}\label{opj}
\Gamma^j(G)(\theta)=\begin{pmatrix}
-\ep\int^{\infty}_0e^{\lambda jt}\left(\overline{Y}^j_1(\theta+\omega t, G,\ep)+R^j_1(\theta+\omega t)\right)dt\\
-\ep\int^{\infty}_0e^{(\lambda j-\lambda_0)t}\left(\overline{Y}^j_2(\theta+\omega t, G,\ep)+R^j_2(\theta+\omega t)\right)dt
\end{pmatrix}
\end{equation}

Assume that we have already obtained $W^k$ in $\C^{L-1+Lip}_k$ for $k=0,\dotsc,j-1$, we have the following:

\begin{lemma}
For small enough $\ep$, we have $\Gamma^j(\C^{L-1+Lip}_j)\subset \C^{L-1+Lip}_j$.
\end{lemma}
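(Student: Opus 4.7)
The plan is to verify three properties of $\Gamma^j(G)$: (i) it is well-defined and periodic in $\theta$; (ii) its derivatives up to order $L-1$ are uniformly bounded by $B^j$; (iii) its $(L-1)$-th derivative is Lipschitz with constant at most $B^j$.

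For (i), since $\lambda<0$ has already been produced in Section~\ref{ssc:pw1} with $\lambda$ close to $\lambda_0<0$ for small $\ep$, and $j\geq 2$, both exponents $\lambda j$ and $\lambda j-\lambda_0\approx (j-1)\lambda_0$ are strictly negative, so both improper integrals in \eqref{opj} converge absolutely. Periodicity $\Gamma^j(G)(\theta+1)=\Gamma^j(G)(\theta)$ follows immediately from periodicity of $A$, $G$, and $R^j$; the last is inherited from the periodicity of the previously constructed $W^0,\dots,W^{j-1}$, on which $R^j$ depends polynomially via derivatives of $\overline{Y}$.

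For (ii) and (iii), I would differentiate under the integral sign, justified by dominated convergence and the exponential weight. By \eqref{formyj}, the integrand of $\Gamma^j_i(G)$ at time $t$ is a linear combination of products of the values of $A$, $G$, and $R^j$ at $\theta+\omega t$. Faà di Bruno (Lemma~\ref{faa}) applied to the composition defining $A$ in \eqref{formA} shows that $A\in C^{L-1+Lip}$, with norm controlled by the derivatives of $\overline{Y}$, $\overline{r\circ K}$, and the already-constructed $W^0\in \C^{L+Lip}_0$. Inductively, each $W^k$, $k<j$, lies in $\C^{L-1+Lip}_k$, and hence $R^j$ inherits a fixed $C^{L-1+Lip}$ bound that depends only on the previously-determined data. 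Since $G\in\C^{L-1+Lip}_j$, all products appearing in the integrand have $C^{L-1+Lip}$ norm bounded uniformly in $t$, and the Lipschitz estimate of the top-order derivative is handled by the same computation. Integrating pointwise bounds against $e^{\lambda j t}$ (respectively $e^{(\lambda j-\lambda_0)t}$) produces an estimate of the form
\[
\|\Gamma^j(G)\|_{L-1+Lip}\leq \frac{\ep}{\delta}\bigl(C_A B^j+C_R\bigr),
\]
where $\delta:=\min(|\lambda j|,|\lambda_0-\lambda j|)>0$, $C_A$ depends on $\|A\|_{C^{L-1+Lip}}$, and $C_R$ depends on $\|R^j\|_{C^{L-1+Lip}}$.

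The main subtlety is that $R^j$ is \emph{not} small in $\ep$: once $W^0,\dots,W^{j-1}$ have been fixed, the size of $R^j$ is determined independently of $\ep$. I would therefore first choose $B^j$ large enough that $C_R/\delta \leq B^j/2$ holds comfortably (e.g.\ $B^j\geq 2 C_R/\delta$), and then shrink $\ep$ so that additionally $\ep C_A/\delta<1/2$. The linearity of $\overline{Y}^j$ in $G$ visible in \eqref{formyj} is what makes the $B^j$-dependence of the bound exactly linear, so that this two-step choice succeeds. Under these conditions, $\|\Gamma^j(G)\|_{L-1+Lip}\leq B^j$, and hence $\Gamma^j(G)\in\C^{L-1+Lip}_j$, as required.
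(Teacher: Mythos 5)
Your proof is correct and implements exactly the route the paper sketches: absolute convergence of the improper integrals from $\lambda j<0$ and $\lambda j-\lambda_0<0$, Fa\`a di Bruno to propagate the $C^{L-1+\operatorname{Lip}}$ bound through $A$, $R^j$, and the composition with $\theta+\omega t$, and the $\ep$ prefactor to close the ball condition. The paper's own ``proof'' of this lemma is a one-line remark deferring to the $W^0$ and $W^1$ cases, so your write-up supplies the omitted details faithfully, including the induction that each $W^k$, $k<j$, already lies in $\C^{L-1+\operatorname{Lip}}_k$ so that $R^j$ has a fixed $C^{L-1+\operatorname{Lip}}$ bound.

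One small point worth flagging: the ``main subtlety'' you raise is not actually present, because the operator in \eqref{opj} carries the factor $\ep$ \emph{outside} the entire integral, i.e.\ $\ep$ multiplies both $\overline{Y}^j$ and $R^j$. Hence your bound $\|\Gamma^j(G)\|_{L-1+\operatorname{Lip}}\leq\frac{\ep}{\delta}\bigl(C_A B^j+C_R\bigr)$ tends to $0$ as $\ep\to0$ uniformly in $G$, and one may simply fix any $B^j>0$ and then take $\ep\leq B^j\delta/(C_AB^j+C_R)$; the two-step choice (first $B^j$ large, then $\ep$ small) is valid but not needed. (The paper's remark ``Moreover, we have $\ep$ in front of the expression'' is precisely this observation.) Your version does have the advantage of being robust to the notational ambiguity in the paper about whether the $R^j$ in \eqref{opj} carries a hidden $\ep$, so it is a defensible, if over-cautious, formulation.
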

This follows from $\lambda< 0$ and $(\lambda j-\lambda_0)< 0$ for $j\geq 2$ and the regularity of $W^0,\dotsc, W^j$, $\overline{Y}^j$, and $R^j$. Moreover, we have $\ep$ in front of the expression. Since this is very similar to the analysis of $W^0$ and $W^1$, we will omit the detailed proof here.

We also know that $\Gamma^j$ is a $C^0$ contraction for small $\ep$.
\begin{lemma}
For small enough $\ep$, $\Gamma^j$ is a contraction in $C^0$ distance.
\end{lemma}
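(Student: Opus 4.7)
The decisive structural observation is that $\Gamma^j$ is \emph{affine} in its argument. Indeed, by \eqref{formyj}, $\overline{Y}^j(\theta,G,\ep)=A(\theta)G(\theta)$ is linear in $G$, while the remainder $R^j$ depends only on $W^0,\dotsc,W^{j-1}$, which are already determined. Hence in the difference
\[
\Gamma^j(G)(\theta)-\Gamma^j(G')(\theta) = -\ep\begin{pmatrix}
\int_0^\infty e^{\lambda j t}\bigl[A(\theta+\omega t)\bigl(G-G'\bigr)(\theta+\omega t)\bigr]_1\,dt\\[2pt]
\int_0^\infty e^{(\lambda j-\lambda_0) t}\bigl[A(\theta+\omega t)\bigl(G-G'\bigr)(\theta+\omega t)\bigr]_2\,dt
\end{pmatrix}
\]
the $R^j$ contributions cancel, leaving something manifestly linear in $G-G'$ and free of any composition with a delay depending on the unknown. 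This is much gentler than the estimates in Lemmas~\ref{lem:contraction0} and~\ref{lem:contraction1}.

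Then I would bound each integral by pulling $\|A\|_{C^0}\|G-G'\|_{C^0}$ out of the integrand and integrating the exponential factor. The two exponents are strictly negative: $\lambda j<0$ because $\lambda<0$ and $j\geq 2$; and $\lambda j-\lambda_0<0$ because, by the estimate \eqref{aprlam} obtained at the first-order step, $\lambda$ is as close to $\lambda_0$ as we want provided $\ep$ is small (in particular $|\lambda|>|\lambda_0|/2$, say), so for $j\geq 2$, $\lambda j<\lambda_0$. This yields
\[
\|\Gamma^j(G)-\Gamma^j(G')\|_{C^0} \leq \ep\,\|A\|_{C^0}\left(\frac{1}{|\lambda j|}+\frac{1}{|\lambda j-\lambda_0|}\right)\|G-G'\|_{C^0}.
\]
The constant in front of $\ep$ is uniformly bounded in quantities already fixed ($\|A\|_{C^0}$ depends only on $\overline{Y}$, $\overline{r\circ K}$, and $W^0$), so choosing $\ep$ sufficiently small (depending on $j$, $\lambda$, $\lambda_0$, and these bounds) produces a strict $C^0$ contraction with constant $\mu_j<1$.

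Combined with the preceding lemma giving $\Gamma^j(\C^{L-1+Lip}_j)\subset \C^{L-1+Lip}_j$, iteration from any initial guess in $\C^{L-1+Lip}_j$ is Cauchy in $C^0$ and converges to a limit; the Lanford-type Lemma~\ref{lem:Lan} promotes this limit back into $\C^{L-1+Lip}_j$. Hence there is a unique fixed point $W^j\in\C^{L-1+Lip}_j$, solving \eqref{invj}. An a posteriori estimate of the form $\|W^{j,0}-W^j\|_{C^0}\leq (1-\mu_j)^{-1}\|E^j\|_{C^0}$ follows from the standard telescoping argument used in \eqref{d0} and \eqref{d1}; interpolation via Lemma~\ref{lem:inter} then upgrades it to the $C^l$ bound claimed in \eqref{aprw}.

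\textbf{Expected main obstacle.} There is no serious analytical difficulty here, precisely because the affine structure removes the nested compositions that made the $W^0$ and $W^1$ estimates delicate. The only point that requires a little care is to ensure that the strict negativity of $\lambda j-\lambda_0$ holds uniformly in the iteration, which is why one must fix $\ep$ small enough to have controlled $\lambda$ close to $\lambda_0$ first. Once that is done, the argument is essentially a copy of the simpler half of the Lemma~\ref{lem:contraction1} proof.
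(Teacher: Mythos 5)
Your proof is correct and takes the same route the paper sketches: the two decisive facts are exactly the ones you isolate, namely that $\overline{Y}^j$ is linear in $W^j$ (so $R^j$ cancels in the difference $\Gamma^j(G)-\Gamma^j(G')$, leaving a clean linear integral operator) and that the exponents $\lambda j$ and $\lambda j-\lambda_0$ are strictly negative for $j\ge 2$, yielding a contraction constant proportional to $\ep$. One small tidy-up: you need not invoke the a posteriori bound \eqref{aprlam} to ensure $\lambda j-\lambda_0<0$; it holds automatically and unconditionally because $\lambda$ was produced as a fixed point in $I^1=\{b:|b-\lambda_0|\le|\lambda_0|/3\}$, so $\lambda\le\frac{2\lambda_0}{3}$ and hence $\lambda j\le\frac{4\lambda_0}{3}<\lambda_0$ for every $j\ge 2$, with no further smallness of $\ep$ required at this step.
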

This follows easily from that $\lambda< 0$ and $(\lambda j-\lambda_0)< 0$ for $j\geq 2$, and $\overline{Y}^j$ is linear in $W^j$.

If we define norm as before
\[
\|G\|=\max\{\sup_{\theta}|G_1(\theta)|, \sup_{\theta}|G_2(\theta)|\},
\]
above lemma tells us that, if $\ep$ is small enough, then one can find $0<\mu_j<1$, such that
\[
\|\Gamma(G)-\Gamma(G')\|\leq\mu_j\|G-G'\|.
\]

Taking any initial guess $W^{j,0}\in \C^{L-1+Lip}_j$, we would take $W^{j,0}(\theta)= \left(\begin{smallmatrix}0\\0\end{smallmatrix}\right)$, the sequence $(\Gamma^j)^n(W^{j,0})$ has a limit in $\C^{L-1+Lip}_j$, we denote it by $W^j$. $W^j$ is a fixed point of operator $\Gamma^j$, so it solves equation \eqref{invj}. $W^j$ close to the initial guess, is unique in the sense of $C^0$ by the contraction argument. We will see quantitative estimates below.

We know that
\begin{equation}
\|W^j-W^{j,0}\|\leq \frac{1}{1-\mu_j}\|W^{j,0}-\Gamma^j(W^{j,0})\|.
\end{equation}

With similar argument as in the error estimation of $W^0$ and $W^1$, we have
\begin{align*}
|W^{j,0}_1(\theta)-\Gamma^j_1(W^{j,0})(\theta)|&\leq-\frac{1}{j\lambda}\|E^j_1\|,\\
|W^{j,0}_2(\theta)-\Gamma^j_2(W^{j,0})(\theta)|&\leq-\frac{1}{j\lambda-\lambda_0}\|E^j_2\|.
\end{align*}
Therefore, we have
\begin{equation}\label{errj}
\|W^j-W^{j,0}\|\leq \frac{1}{1-\mu_j}\left(-\frac{1}{j\lambda}\|E^j_1\|-\frac{1}{j\lambda-\lambda_0}\|E^j_2\|\right)\leq C\|E^j\|.
\end{equation}
We stress that above $C$ depends on $j$, $\ep$, $B^j$ and the SDDE, however, it does not depend on choice of $W^{j,0}$ in space $\C^{L-1+Lip}_j$.
\subsubsection{Equation of Higher Order Term}
\label{ssc:phot}
Now we have already found $\omega$, $\lambda$, $W^0,\dotsc, W^{N-1}$. It remains to consider the higher order term. We will solve equation \eqref{invh} locally in this section, which will establish the existence in Theorem \ref{thm:all}. From now on, we will write:
\begin{equation}\label{whform}
W(\theta, s)=W^{\leq}(\theta, s)+W^>(\theta, s),
\end{equation}
where $W^{\leq}(\theta, s)=\sum^{N-1}_{j=0}W^j(\theta)s^j$. 
To make the analysis feasible, we do a cut-off to the equation satisfied by $W^>$ in \eqref{invh}:
\begin{equation}\label{invhh}
(\omega\partial_{\theta}+s\lambda\partial_s)W^>(\theta,s)=\begin{pmatrix}
0\\\lambda_0W^>_2(\theta, s)
\end{pmatrix}+\varepsilon Y^>(W^>,\theta, s, \varepsilon)\phi(s),
\end{equation}
where 
\begin{equation}\label{formy>}
Y^>(W^>,\theta, s, \varepsilon)=\overline{Y}(W(\theta,s),\widetilde{W}(\theta,s),\varepsilon)-\sum_{i=0}^{N-1}\overline{Y}^i(\theta)s^i,
\end{equation}
\begin{equation*}
\overline{Y}^i(\theta)=\frac{1}{i!}\frac{\partial^i}{\partial s^i}(\overline{Y}(W(\theta,s),\widetilde{W}(\theta,s),\varepsilon))|_{s=0},
\end{equation*}
and recall the $C^{\infty}$ cut-off function $\phi:\mathbb{R}\to[0,1]$ as introduced in \eqref{cutoff}:
\begin{equation*}
\phi(x)=
\begin{cases}
1 & \text{if}\quad |x|\leq\frac{1}{2},\\
0 & \text{if}\quad |x|>1.
\end{cases}
\end{equation*}

\begin{remark}
Cut-off is needed in our method. We note that similar to before, the boundaries for cut-off function above($\frac{1}{2}$ and $1$) could be changed to any positive numbers $a_1<a_2$. 

Adding a cut-off is not too restrictive. Indeed, we only get local results for the original problem near the limit cycle. Since we have used extensions to get the prepared equation \eqref{invv}, what happens for $s$ with large absolute value will not matter. 
\end{remark}

Now let $c(t)=(\theta+\omega t, se^{\lambda t})$ be the characteristics, we define an operator as follows:
\begin{equation}\label{op>}
\Gamma^>(H)(\theta, s)=-\varepsilon\int^{\infty}_0 \begin{pmatrix}
1&0\\
0&e^{-\lambda_0t}
\end{pmatrix}Y^>(H,c(t), \varepsilon)\phi(se^{\lambda t})dt.
\end{equation}

If there is a fixed point of $\Gamma^>$ which has some regularity, it will solve the modified invariance equation \eqref{invhh}. For the domain of $\Gamma^>$, assume $L^>$ is a positive integer, we consider $D^>$ the space of functions $H:\mathbb{T}\times\mathbb{R}\to\mathbb{T}\times\mathbb{R}$, where $\partial^l_{\theta}\partial^m_sH_i(\theta, s)$, $i=1, 2$, exists if $l+m\leq L^>$, with $\|\cdot\|_{L^>,N}$ norm bounded by a constant ${B}$:

\begin{equation}\label{>norm}
\|H\|_{L^>,N}:=\max_{l+m\leq L^>, i=1,2}
\begin{cases}
\sup_{(\theta, 
s)\in\mathbb{T}\times\mathbb{R}}|\partial^l_{\theta}
\partial^m_sH_i(\theta, s)||s|^{-(N-m)} & \text{if} \quad m\leq N,\\
\sup_{(\theta, 
s)\in\mathbb{T}\times\mathbb{R}}|\partial^l_{\theta}
\partial^m_sH_i(\theta, s)| & \text{if}\quad m>N.
\end{cases}
\end{equation}
Under above notations in \eqref{whform}, we have 
\begin{equation*}
 \begin{split}
  \widetilde{W}(\theta,s)
  &=W(\theta-\omega \overline{r\circ K}(W(\theta,s)),se^{-\lambda \overline{r\circ K}(W(\theta,s))}) 
\\
  &=W^{\leq}(\theta-\omega \overline{r\circ K}(W(\theta,s)),se^{-\lambda \overline{r\circ K}(W(\theta,s))}) \\
  & \quad +W^{>}(\theta-\omega \overline{r\circ K}(W(\theta,s)),se^{-\lambda \overline{r\circ K}(W(\theta,s))}).
 \end{split}
\end{equation*}

We define 
\begin{equation}\label{tildew>}
\widetilde{W}^>(\theta,s)=W^{>}(\theta-\omega \overline{r\circ K}((W^{\leq}+W^>)(\theta,s)),se^{-\lambda \overline{r\circ K}((W^{\leq}+W^>)(\theta,s))}).
\end{equation}

\begin{lemma}\label{lem:prob>}
If $\varepsilon$ is small enough, $\Gamma^>(D^>)\subset D^>$.
\end{lemma}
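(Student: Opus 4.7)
The plan is to verify three things in turn: that $\Gamma^>(H)$ is well-defined (the integral converges for every $(\theta,s)$), that $\Gamma^>(H)$ is periodic in $\theta$ with period $1$, and that $\|\Gamma^>(H)\|_{L^>,N}\le B$ when $\varepsilon$ is sufficiently small. Periodicity is immediate: by construction $W^\leq$, $\overline{r\circ K}$, and $H$ are all $1$-periodic in $\theta$, so the integrand $Y^>(H,c(t),\varepsilon)\phi(se^{\lambda t})$ is $1$-periodic in $\theta$, hence so is its integral.

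The crucial observation for the size estimate is that
\[
Y^>(H,\theta,s,\varepsilon)=\overline{Y}(W(\theta,s),\widetilde W(\theta,s),\varepsilon)-\sum_{i=0}^{N-1}\overline{Y}^i(\theta)s^i
\]
is the Taylor remainder at $s=0$ of order $N$. Since $H\in D^>$ implies that $W=W^\leq+H$ has the right $s$-expansion up to order $N-1$, integral form of Taylor's theorem gives $|Y^>(H,\theta,s,\varepsilon)|\le C|s|^N$ on the support of $\phi(\cdot)$, where $C$ depends on the $C^N$ bounds of $\overline{Y}$, $\overline{r\circ K}$, $W^\leq$ and $B$. Plugging in $c(t)=(\theta+\omega t,se^{\lambda t})$ yields
\[
\bigl|Y^>(H,c(t),\varepsilon)\,\phi(se^{\lambda t})\bigr|\le C|s|^N e^{N\lambda t}.
\]
For the first component of $\Gamma^>(H)$, integrating $e^{N\lambda t}$ gives $1/(N|\lambda|)$ since $\lambda<0$. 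For the second component, the prefactor $e^{-\lambda_0 t}$ combines with $e^{N\lambda t}$ to give $e^{(N\lambda-\lambda_0)t}$; because $N\ge 2$ and $\lambda$ is close to $\lambda_0<0$ (from Theorem~\ref{thm:all}), $N\lambda-\lambda_0<0$ and the integral converges with bound $1/|N\lambda-\lambda_0|$. Multiplying by $\varepsilon$, the weighted $C^0$ part of $\|\Gamma^>(H)\|_{L^>,N}$ (i.e.\ $m=0$) is bounded by $\varepsilon C'$.

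For the higher-order terms in the $\|\cdot\|_{L^>,N}$ norm, I would differentiate under the integral sign. Each $\partial_\theta$ acts via the chain rule on $c(t)$ and on $W^\leq,H,\overline{r\circ K}$ appearing inside $Y^>$, producing no extra $s$-factors. Each $\partial_s$ acting on $Y^>(H,c(t),\varepsilon)$ or on $\phi(se^{\lambda t})$ brings down a factor $e^{\lambda t}$ through the chain rule and reduces the order of vanishing in $s$ by one. Applying Faà di Bruno's formula (Lemma~\ref{faa}) to the composition with $\widetilde W$, $W$ and $\overline{r\circ K}(W)$, we get that for $\partial_\theta^l\partial_s^m$ with $l+m\le L^>$ and $m\le N$, the integrand is bounded by $C_{l,m}|s|^{N-m}e^{(N-m)\lambda t}$ for the first component and by $C_{l,m}|s|^{N-m}e^{((N-m)\lambda-\lambda_0)t}$ for the second; for $m>N$ the Taylor remainder argument no longer provides vanishing but the exponentials $e^{(N-m)\lambda t}$ that arise from differentiating the cut-off $\phi(se^{\lambda t})$ a finite number of times still integrate because $\phi$ is compactly supported (so those extra negative powers of $s$ are harmless on the support). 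Integrating in $t$ always gives a finite constant because $N\lambda$ and $(N-m)\lambda-\lambda_0$ are negative for $m\le N$ (using $N\ge 2$ and $\lambda$ close to $\lambda_0$).

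The main obstacle is the bookkeeping for the mixed derivative terms, since each $\partial_s$ on $Y^>$ has to be expanded by Faà di Bruno's formula and the derivatives of $\widetilde W$ (which itself involves $H$ composed with $\overline{r\circ K}(W^\leq+H)$ and with $se^{-\lambda\overline{r\circ K}(W^\leq+H)}$) contribute polynomially many terms, each depending on $\|H\|_{L^>,N}\le B$. The weighted norm was designed precisely so that the factors $e^{\lambda t}$ produced by $\partial_s$ combine with the Taylor-remainder vanishing to leave the right power of $|s|$ free; once the combinatorics is organized, one obtains
\[
\|\Gamma^>(H)\|_{L^>,N}\le \varepsilon\, \mathcal C(B,L^>,N,\overline{Y},\overline{r\circ K},W^\leq,\omega,\lambda,\lambda_0),
\]
and choosing $\varepsilon$ small enough so that the right-hand side is $\le B$ finishes the proof.
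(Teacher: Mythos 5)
Your starting observation --- that $Y^>$ is literally the Taylor remainder of $g(\theta,s):=\overline{Y}(W(\theta,s),\widetilde W(\theta,s),\varepsilon)$ at $s=0$ --- is correct, but the way you try to exploit it does not go through. The integral form of Taylor's theorem, applied directly to $g$ to conclude $|Y^>|\le C|s|^N$, requires $\partial_s^N g$ to exist and be uniformly bounded. Since $g$ depends on $H$ through $W=W^\le+H$ (and through $\widetilde W$), this would require $\partial_s^N H$. But membership in $D^>$ only guarantees existence and bounds for $\partial_\theta^l\partial_s^m H$ with $l+m\le L^>$, and the paper works with $L^>=L-1-N$, which under the standing hypothesis $L\ge 2+N$ can be as small as $1$. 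For instance $N=2$, $L=4$ gives $L^>=1<N$: one cannot take $\partial_s^2 g$, so your Taylor remainder estimate is not justified. The same obstruction propagates to your higher-order estimates, since each $\partial_\theta^l\partial_s^m$ of $Y^>(H,c(t),\varepsilon)$ with $m$ small still rests on the unproved $|Y^>|\lesssim|s|^N$ bound.

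The paper avoids this by \emph{splitting} the remainder. Because $H=O(|s|^N)$, the Taylor polynomial $\sum_{i<N}\overline{Y}^i(\theta)s^i$ is unchanged if $H$ is set to zero. Hence
\[
Y^> = \bigl(g - g_0\bigr) + \bigl(g_0 - \textstyle\sum_{i<N}\overline{Y}^i s^i\bigr),
\]
where $g_0$ is the $H$-free version of $g$. The paper further breaks $g-g_0$ into two integral-MVT pieces ($\ell_1$, $\ell_2$ in the proof), each of which is \emph{linear} in $H$ or $\widetilde H$ and therefore inherits the weighted bound $\lesssim|s|^{N-m}$ directly from $\|H\|_{L^>,N}\le B$ together with the auxiliary claim \eqref{tildeh} about $\widetilde H$ --- no higher $s$-derivatives of $H$ are required. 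The genuine Taylor-remainder step is applied only to $\ell_3 = g_0-\sum\overline{Y}^i s^i$, which is $H$-free and therefore has as many $s$-derivatives as $W^\le$, namely $L-1\ge L^>+N$. This is precisely why the paper picks $L^>=L-1-N$: it is the largest value for which the $\ell_3$ estimate closes. Your proposal skips the decomposition, which is not a mere bookkeeping detail but the step that makes the regularity budget work.

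Two smaller issues. First, you never establish bounds on the derivatives of $\widetilde H$ (the paper's preliminary claim \eqref{tildeh}); these are needed because $\widetilde H$ involves $H$ composed with $H$-dependent arguments, so the weighted decay of its derivatives is not automatic from $\|H\|_{L^>,N}\le B$. Second, the remark that for $m>N$ the ``extra negative powers of $s$ are harmless on the support'' misreads the cut-off: $\phi(se^{\lambda t})$ restricts $|s|e^{\lambda t}\le 1$, which for large $|s|$ pushes the support to large $t$ rather than bounding $|s|$; the relevant control instead comes from the definition of $\|\cdot\|_{L^>,N}$, which drops the $|s|^{-(N-m)}$ weight when $m>N$, and from the decay in $t$.
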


\begin{proof}
For $H\in D^>$, we need to prove that for $i=1, 2$, and $l+m\leq L^>$, $\partial^l_{\theta}\partial^m_s\Gamma^>_i(H)(\theta, s)$ exists,  also that $\|\Gamma^>(H)\|_{L^>,N}$ is bounded by $B$. Using definition in equation \eqref{tildew>}
\begin{equation*}
\widetilde{H}(\theta, s)=H(\theta-\omega \overline{r\circ K}((W^{\leq}+H)(\theta,s)),se^{-\lambda \overline{r\circ K}((W^{\leq}+H)(\theta,s))})
\end{equation*}

We first claim that for $\|H\|_{L^>,N}\leq {B}$, we can find $C$, which does not depend on the choice of $H$, such that for $l+m\leq L^>$, $i=1,2$, $(\theta, s)\in \widetilde{\T}\times[-1,1]$:
\begin{equation}\label{tildeh}
\begin{cases}
|\partial^l_{\theta}\partial^m_s\widetilde{H}_i(\theta, s)|\leq C|s|^{(N-m)} 
& \text{if} \quad m\leq N,\\
|\partial^l_{\theta}\partial^m_s\widetilde{H}_i(\theta, s)|\leq C 
& \text{if}\quad m>N.
\end{cases}
\end{equation}

Note that within the proof of this lemma, $C$ may vary from line to line. Finally, we will take $C$ to be the maximum of all $C$s appear in this proof.

To prove above claim, notice that $\|H\|_{L^>,N}\leq {B}$ implies that 
\[
\begin{cases}
|\partial^l_{\theta}\partial^m_sH_i(\theta, s)|\leq {B}|s|^{(N-m)} & \text{if} 
\quad m\leq N,\\
|\partial^l_{\theta}\partial^m_sH_i(\theta, s)|\leq {B} & \text{if}\quad m>N.
\end{cases}
\]
 for $l+m\leq L^>$, $i=1,2$, and $(\theta,s)\in \mathbb{T}\times \mathbb{R}$. Then
\[
|\widetilde{H}_i(\theta,s)|\leq B|s|^{N}e^{-\lambda N \overline{r\circ K}((W^{\leq}+H)(\theta,s))}.
\]
By boundedness of $\overline{r\circ K}$, we have that $|\widetilde{H}_i(\theta,s)|\leq C|s|^{N}$. Note that
\begin{align*}
\frac{\partial}{\partial \theta}\widetilde{H}_i(\theta,s)=\partial_{\theta}H_i\left(\theta-\omega \overline{r\circ K}((W^{\leq}+H)(\theta,s)),se^{-\lambda \overline{r\circ K}((W^{\leq}+H)(\theta,s))}\right)\cdot\\
\cdot\left(1-\omega D(\overline{r\circ K})((W^{\leq}+H)(\theta,s))\partial_{\theta}(W^{\leq}+H)(\theta,s)\right)\phantom{AAA}\\
+\partial_{s}H_i\left(\theta-\omega \overline{r\circ K}((W^{\leq}+H)(\theta,s)),se^{-\lambda\overline{r\circ K}((W^{\leq}+H)(\theta,s))}\right)\cdot\\
\cdot s(-\lambda)D(\overline{r\circ K})((W^{\leq}+H)(\theta,s))\partial_{\theta}(W^{\leq}+H)(\theta,s)e^{-\lambda \overline{r\circ K}((W^{\leq}+H)(\theta,s))}
\end{align*}

Then, we have
\begin{align*}
\left|\frac{\partial}{\partial \theta}\widetilde{H}_i(\theta,s)\right|\leq&
B|s|^{N}e^{-\lambda N \|\overline{r\circ K}\|}(1+|\omega| \|D(\overline{r\circ K})\|\|\partial_{\theta}(W^{\leq}+H)\|\\
&+B|s|^{N-1}e^{-\lambda (N-1) \|\overline{r\circ K}\|}|s||\lambda|\|D(\overline{r\circ K})\|e^{-\lambda \| r\circ K\|}\|\partial_{\theta}(W^{\leq}+H)\|.
\end{align*}

By boundedness of $W^{\leq}$, $H$, $\overline{r\circ K}$, and their derivatives, we have
\[
\left|\frac{\partial}{\partial \theta}\widetilde{H}_i(\theta,s)\right|\leq C|s|^{N}.
\]
Above $C$ depends on $B$, but it will not depend on the choice of $H\in D^>$.
Similarly,
\begin{align*}
\frac{\partial}{\partial s}\widetilde{H}_i(\theta,s)=\partial_{\theta}H_i(\theta-\omega\overline{r\circ K}((W^{\leq}+H)(\theta,s)),se^{-\lambda\overline{r\circ K}((W^{\leq}+H)(\theta,s))})\cdot\\
\cdot(-\omega)D(\overline{r\circ K})((W^{\leq}+H)(\theta,s))\partial_{s}(W^{\leq}+H)(\theta,s)\phantom{AAA}\\
+\partial_{s}H_i(\theta-\omega \overline{r\circ K}((W^{\leq}+H)(\theta,s)),se^{-\lambda \overline{r\circ K}((W^{\leq}+H)(\theta,s))})\cdot\\
\cdot\left(1+s(-\lambda)D(\overline{r\circ K})((W^{\leq}+H)(\theta,s))\partial_{s}(W^{\leq}+H)(\theta,s)\right)e^{-\lambda \overline{r\circ K}((W^{\leq}+H)(\theta,s))}.
\end{align*}
Then,
\begin{align*}
\left|\frac{\partial}{\partial s}\widetilde{H}_i(\theta,s)\right|\leq&B|s|^{N-1}e^{-\lambda (N-1) \|\overline{r\circ K}\|}\left(1+|s||\lambda|\|D(\overline{r\circ K})\|e^{-\lambda \| \overline{r\circ K}\|}\|\partial_{s}(W^{\leq}+H)\|\right)\\
&+B|s|^{N}e^{-\lambda N \|\overline{r\circ K}\|}|\omega| \|D(\overline{r\circ K})\|\|\partial_{s}(W^{\leq}+H)\|.
\end{align*}
Since we have $|s|\leq1$, regularity of $W^{\leq}$ and $H$ we have
\[
\left|\frac{\partial}{\partial s}\widetilde{H}_i(\theta,s)\right|\leq C|s|^{N-1}.
\]
The $C$ will not depend on the choice of $H$ as long as $\|H\|_{L^>,N}\leq B$. The proof of the claim is then finished by induction.

Now we observe that we can bound the integrand in the operator $\Gamma^>$. 

Claim: There exists constant $C$, such that $\|Y(H,\theta, s, \varepsilon)\phi(s)\|_{L^>,N}\leq C$ when $\|H\|_{L^>,N}\leq B$.\\
Note that by definition of the cut-off function $\phi$, it suffices to consider $s\in[-1,1]$.
\begin{equation*}
Y^>(H,\theta, s,\varepsilon)=\overline{Y}((W^{\leq}+H)(\theta,s),\widetilde{(W^{\leq}+H)}(\theta,s),\varepsilon)-\sum^{N-1}_{i=0}\overline{Y}^i(\theta)s^i,
\end{equation*}
where 
\[
\overline{Y}^i(\theta)=\frac{1}{i!}\frac{\partial^i}{\partial s^i}(\overline{Y}((W^{\leq}+H)(\theta,s),\widetilde{(W^{\leq}+H)}(\theta,s),\varepsilon))|_{s=0}.
\]
One can add and subtract terms in above expression,
\begin{equation}\label{Y>est}
\begin{split}
Y^>(H,\theta, s,\varepsilon)=&\overline{Y}((W^{\leq}+H)(\theta,s),\widetilde{(W^{\leq}+H)}(\theta,s),\varepsilon)\\
&-\overline{Y}(W^{\leq}(\theta,s),\widetilde{W^{\leq}}(\theta,s,H),\varepsilon)\\
&+\overline{Y}(W^{\leq}(\theta,s),\widetilde{W^{\leq}}(\theta,s,H),\varepsilon)\\
&-\overline{Y}(W^{\leq}(\theta,s),W^{\leq}(\theta-\omega \overline{r\circ K}(W^{\leq}(\theta,s)),se^{-\lambda\overline{r\circ K}(W^{\leq}(\theta,s))}),\varepsilon)\\
&+\overline{Y}(W^{\leq}(\theta,s),W^{\leq}(\theta-\omega \overline{r\circ K}(W^{\leq}(\theta,s)),se^{-\lambda\overline{r\circ K}(W^{\leq}(\theta,s))}),\varepsilon)\\
&-\sum^{N-1}_{i=0}\overline{Y}^i(\theta)s^i,
\end{split}
\end{equation}

where we used the notation 
\[
\widetilde{W^{\leq}}(\theta,s;H)=W^{\leq}(\theta-\omega \overline{r\circ K}((W^{\leq}+H)(\theta,s)),se^{-\lambda\overline{r\circ K}((W^{\leq}+H)(\theta,s))}).
\]
We group the first two lines, the two lines in the middle, and the last two lines in \eqref{Y>est}, and denote them as $\ell_1$, $\ell_2$, and $\ell_3$, respectively. Then for $\ell_1$:

\begin{align*}
\ell_1=\int^1_0D_1\overline{Y}((1-t)W^{\leq}(\theta,s)+t(W^{\leq}+H)(\theta,s),\widetilde{(W^{\leq}+H)}(\theta,s),\varepsilon)H(\theta,s)dt\\
+\int^1_0D_2\overline{Y}(W^{\leq}(\theta,s),(1-t)\widetilde{W^{\leq}}(\theta,s;H)+t\widetilde{(W^{\leq}+H)}(\theta,s),\varepsilon)\widetilde{H}(\theta,s)dt
\end{align*}
By the regularity of $Y$, and $W^{\leq}$ and $\|H\|_{L^>,N}\leq B$, using that $\widetilde{H}$ satisfy \eqref{tildeh}, we know that $\|\ell_1\phi(s)\|_{L^>,N}\leq C$.

Similarly $\ell_2$ is
\begin{align*}
\int^1_0&D_2\overline{Y}(W^{\leq}(\theta,s),W^{\leq}(\theta-\omega \overline{r\circ K}((W^{\leq}+tH)(\theta,s)),se^{-\lambda\overline{r\circ K}((W^{\leq}+tH)(\theta,s))}),\varepsilon)\cdot\\
&[\partial_{\theta}W^{\leq}(\cdot)(-\omega)D(\overline{r\circ K})(\cdot)+\partial_{s}W^{\leq}(\cdot)se^{-\lambda\overline{r\circ K}(\cdot)}D(\overline{r\circ K})(\cdot)(-\lambda)]H(\theta,s)dt,
\end{align*}
Similar to $\ell_1$ case, we have that  $\|\ell_2\phi(s)\|_{L^>,N}\leq C$.

For the third line, notice that $\sum^{N-1}_{i=0}\overline{Y}^i(\theta)s^i$ is the Taylor expansion at $s=0$ for 
\begin{equation}\label{ylow}
\overline{Y}(W^{\leq}(\theta,s),W^{\leq}(\theta-\omega \overline{r\circ K}(W^{\leq}(\theta,s)),se^{-\lambda\overline{r\circ K}(W^{\leq}(\theta,s))}),\varepsilon),
\end{equation}

According to Taylor's Formula with remainder, see \cite{RaXue}, we just need to show that for $m\leq N$
\[
\frac{\partial^{N-m}}{\partial s^{N-m}}\frac{\partial^l}{\partial \theta^l}\frac{\partial^m}{\partial s^m}\eqref{ylow},
\]
and for $m>N$,
\[
\frac{\partial^m}{\partial s^m}\frac{\partial^l}{\partial \theta^l}(\ell_3),
\]
are bounded for all $\theta$, $|s|\leq 1$, and $l+m\leq L^>$. This is true if we assume that the lower order term has more regularity, more precisely, $L-1\geq L^>+N$. We will take $L^>=L-1-N$ to optimize regularity. Therefore, we have $\|\ell_3\phi(s)\|_{L^>,N}\leq C$, and the claim is proved. 

Hence, according to \eqref{op>}, if $m\leq N$, for small $\varepsilon$, we have that
\begin{equation}
|\partial^l_{\theta}\partial^m_s\Gamma^>_i(H)(\theta,s)|\leq \varepsilon\left|\int^{\infty}_0e^{-\lambda_0 t}C|s|^{N-m}e^{\lambda (N-m)t}e^{\lambda mt}dt\right|\leq B|s|^{N-m},
\end{equation}
if $m>N$, for small $\varepsilon$, we have that
\begin{equation}
|\partial^l_{\theta}\partial^m_s\Gamma^>_i(H)(\theta,s)|\leq \varepsilon\left|\int^{\infty}_0e^{-\lambda_0 t}Ce^{\lambda mt}dt\right|\leq B,
\end{equation}

Therefore, for small $\ep$, $\|\Gamma^>_i(H)\|_{L^>, N}\leq B$ when $\|H\|_{L^>, N}\leq B$.
\end{proof}
\begin{lemma}\label{lem:contraction>}
If $\varepsilon$ small enough, we have $\Gamma^>$ is a contraction in $\|\cdot\|_{0,N}$. 
\end{lemma}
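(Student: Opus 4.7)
The strategy is to exploit the integral representation
\[
\Gamma^>(H)(\theta,s)-\Gamma^>(H')(\theta,s) = -\varepsilon \int_0^{\infty} \begin{pmatrix} 1 & 0 \\ 0 & e^{-\lambda_0 t} \end{pmatrix} \bigl[Y^>(H,c(t),\varepsilon)-Y^>(H',c(t),\varepsilon)\bigr]\phi(se^{\lambda t})\,dt,
\]
and to prove a pointwise bound of the form $|Y^>(H,\theta,s,\varepsilon)-Y^>(H',\theta,s,\varepsilon)|\leq C|s|^N\|H-H'\|_{0,N}$ with $C$ independent of $H,H'\in D^>$ (in the regime where $\varepsilon$ is already small enough for Lemma~\ref{lem:prob>}). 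Once this is in hand, the conclusion follows from integration of geometric factors.

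A key preliminary observation is that in the difference $Y^>(H)-Y^>(H')$ the Taylor-polynomial correction terms $\overline{Y}^i(\theta)s^i$, $i=0,\dotsc,N-1$, cancel. Indeed, since every function in $D^>$ vanishes to order $N$ at $s=0$, Fa\`a di Bruno's formula (Lemma~\ref{faa}) applied to the definition of $\overline{Y}^i(\theta)$ shows that for $i\leq N-1$ these coefficients depend only on $W^{\leq}$, not on $H$. Consequently,
\[
Y^>(H,\theta,s,\varepsilon)-Y^>(H',\theta,s,\varepsilon) = \overline{Y}\bigl((W^{\leq}+H)(\theta,s),\widetilde{(W^{\leq}+H)}(\theta,s),\varepsilon\bigr) - \overline{Y}\bigl((W^{\leq}+H')(\theta,s),\widetilde{(W^{\leq}+H')}(\theta,s),\varepsilon\bigr).
\]
By the mean value theorem and boundedness of $D\overline{Y}$, this is controlled by $|H(\theta,s)-H'(\theta,s)|$ (which is immediately bounded by $\|H-H'\|_{0,N}|s|^N$) plus the delayed difference $|\widetilde{(W^{\leq}+H)}(\theta,s)-\widetilde{(W^{\leq}+H')}(\theta,s)|$.

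To handle the delayed difference, I plan to add and subtract $(W^{\leq}+H)$ evaluated at the delay point determined by $H'$. This produces a direct difference $(H-H')$ evaluated at a delayed argument whose $s$-coordinate has absolute value at most $|s|e^{-\lambda\|\overline{r\circ K}\|}$, hence bounded by $C|s|^N\|H-H'\|_{0,N}$. The remaining piece $(W^{\leq}+H)(\cdot_H)-(W^{\leq}+H)(\cdot_{H'})$ is estimated by the mean value theorem using that $|\overline{r\circ K}(W^{\leq}+H)-\overline{r\circ K}(W^{\leq}+H')|\leq C\|H-H'\|_{0,N}|s|^N$. The delicate point is that although $\partial_s H$ only controls $|s|^{N-1}$ (not $|s|^N$), the $s$-coordinate difference between $\cdot_H$ and $\cdot_{H'}$ carries an extra factor $|s|$ (from the $s$ in $se^{-\lambda\overline{r\circ K}(\cdot)}$), so the product $\|\partial_s H\|\cdot (s\text{-difference})$ is of order $|s|^{2N}\|H-H'\|_{0,N}$. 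Meanwhile, $\partial_\theta W^{\leq}$ and $\partial_s W^{\leq}$ are uniformly bounded, so the $W^{\leq}$ part contributes order $|s|^N\|H-H'\|_{0,N}$. Putting everything together yields the desired pointwise bound.

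Plugging the bound on $Y^>(H)-Y^>(H')$ into the integrals, the integrand for the first component is dominated by $\varepsilon C|s|^N e^{\lambda N t}\|H-H'\|_{0,N}$ and for the second by $\varepsilon C|s|^N e^{(\lambda N-\lambda_0)t}\|H-H'\|_{0,N}$. Both integrals converge: the first because $\lambda<0$, and the second because $\lambda$ is close to $\lambda_0<0$ together with $N\geq 2$ forces $\lambda N<\lambda_0$. Dividing by $|s|^N$ and taking the supremum gives $\|\Gamma^>(H)-\Gamma^>(H')\|_{0,N}\leq C\varepsilon\|H-H'\|_{0,N}$, and choosing $\varepsilon$ small enough produces a contraction constant $\mu_>\in(0,1)$. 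The main obstacle is the delayed-difference term, where both the cancellation of the Taylor polynomial (requiring the observation that $\overline{Y}^i$ depends only on $W^{\leq}$ for $i<N$) and the bookkeeping of powers of $|s|$ in products involving $\partial_s H$ are essential for the estimate to close in the weighted $\|\cdot\|_{0,N}$ norm.
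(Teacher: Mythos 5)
Your proposal is correct and follows essentially the same route as the paper: express the difference $\Gamma^>(H)-\Gamma^>(H')$ via the characteristics integral, reduce to a pointwise bound $|Y^>(H)-Y^>(H')|\leq C|s|^N\|H-H'\|_{0,N}$ using the mean value theorem on $\overline{Y}$ with the direct difference and the delayed difference handled separately, and then integrate using $\lambda N-\lambda_0<0$. The one place where you are more explicit than the paper is in justifying the cancellation of the Taylor-polynomial terms $\overline{Y}^i(\theta)s^i$: the paper simply writes $Y^>(H)-Y^>(H')=\overline{Y}(W,\widetilde W,\varepsilon)-\overline{Y}(W',\widetilde{W'},\varepsilon)$ without comment, while you observe that this requires the $\overline{Y}^i$, $i<N$, to depend only on $W^{\leq}$ because $H$ vanishes to order $N$ at $s=0$; this is a correct and worthwhile clarification. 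Your three-term decomposition of the delayed difference (add and subtract $W$ evaluated at the $H'$-delay point, then split by coordinate) is a harmless permutation of the paper's decomposition and leads to the same bound, including the observation that the $\partial_s H$ contribution picks up an extra factor of $|s|$ from the $s$-dependence of the delayed second coordinate, so it is of order $|s|^{2N}\leq|s|^N$.
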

\begin{proof}
Recall that $\|H\|_{0,N}=\sup_{(\theta,s)\in\T\times\R}|H(\theta,s)||s|^{-N}$. We consider
\begin{multline}
\Gamma^>(H)(\theta,s)-\Gamma^>(H')(\theta,s)\\
\qquad= -\varepsilon\int^{\infty}_0 \begin{pmatrix}
1&0\\
0&e^{-\lambda_0t}
\end{pmatrix}\left(Y^>(H,c(t), \varepsilon)-Y^>(H',c(t), \varepsilon)\right)\phi(se^{\lambda t})dt
\end{multline}
Given the low order terms, denote $W=W^{\leq}+H$ and $W'= W^{\leq}+H'$, we have
\begin{multline}
Y^>(H,c(t), \varepsilon)-Y^>(H',c(t), \varepsilon)\\
=\overline{Y}(W(c(t)),\widetilde{W}(c(t)),\varepsilon)-\overline{Y}(W'(c(t)),\widetilde{W'}(c(t)),\varepsilon).
\end{multline}

Note that for all $\theta$, $s$,
\begin{equation}
|W(\theta,s)-W'(\theta,s)|=|H(\theta,s)-H'(\theta,s)|\leq \|H-H'\|_{L^>,N}|s|^{N}.
\end{equation}

Then for $\widetilde{W}(\theta,s)-\widetilde{W'}(\theta,s)$, by adding and subtracting terms, we have for all $\theta$, $s$,
\begin{align*}
|\widetilde{W}(\theta,s)-\widetilde{W'}(\theta,s)|
=&\bigg|W(\theta-\omega\overline{r\circ K}(W(\theta,s)), se^{-\lambda \overline {r\circ K}(W(\theta,s))})\\
&\phantom{A}-W'(\theta-\omega\overline{r\circ K}(W'(\theta,s)), se^{-\lambda \overline {r\circ K}(W'(\theta,s))})\bigg|\\
\leq&\bigg|W(\theta-\omega\overline{r\circ K}(W(\theta,s)), se^{-\lambda \overline {r\circ K}(W(\theta,s))})\\
&\phantom{AA}-W'(\theta-\omega\overline{r\circ K}(W(\theta,s)), se^{-\lambda \overline {r\circ K}(W(\theta,s))})\bigg|\\
&+\bigg|W'(\theta-\omega\overline{r\circ K}(W(\theta,s)), se^{-\lambda \overline {r\circ K}(W(\theta,s))})\\
&\phantom{AA}-W'(\theta-\omega\overline{r\circ K}(W'(\theta,s)), se^{-\lambda \overline {r\circ K}(W(\theta,s))})\bigg|\\
&+ \bigg|W'(\theta-\omega\overline{r\circ K}(W'(\theta,s)), se^{-\lambda \overline {r\circ K}(W(\theta,s))})\\
&\phantom{AA}-W'(\theta-\omega\overline{r\circ K}(W'(\theta,s)), se^{-\lambda \overline {r\circ K}(W'(\theta,s))})\bigg|\\
\leq& M_1 \|H-H'\|_{0,N}|s|^N,
\end{align*}

where
\[
M_1=e^{-\lambda N\|\overline{r\circ K}\|}+(\|DW^{\leq}\|+B)\|D(\overline {r\circ K})\|(|\omega|+|\lambda||s|e^{-\lambda\|\overline {r\circ K}\|}).
\]
Then,
\[
|\Gamma^>(H)(\theta,s)-\Gamma^>(H')(\theta,s)|\leq \varepsilon\|H-H'\|_{0,N}|s|^N
\int^{\infty}_0 e^{(\lambda N-\lambda_0)t}M\phi(se^{\lambda t})dt,
\]
where 
\[
M=\|D_1\overline{Y}\|+\|D_2\overline{Y}\|M_1.
\]
Now, notice that by definition of $D^1$, we have that $\lambda\in [\frac{4\lambda_0}{3},\frac{2\lambda_0}{3}]$, then $\lambda N-\lambda_0<0$ if $N\geq2 $. Under this assumption, we have for all $\theta$, $s$,
\begin{equation*}
|\Gamma^>(H)(\theta,s)-\Gamma^>(H')(\theta,s)|\leq-\frac{\varepsilon M}{\lambda N-\lambda_0}\|H-H'\|_{0,N}|s|^N.
\end{equation*}
If $\varepsilon$ is small enough, we have for all $\theta$, $s$,
\[
|\Gamma^>(H)(\theta,s)-\Gamma^>(H')(\theta,s)|\leq\mu\|H-H'\|_{0,N}|s|^N.
\]
Hence for small enough $\ep$,
\[
\|\Gamma^>(H)-\Gamma^>(H')\|_{0,N}\leq\mu\|H-H'\|_{0,N},
\]
$\Gamma^>$ is a contraction. Note that smallness condition for $\ep$ depends on $N$, $B^j$, $j=0, \dotsc, N-1$, $B$, $\omega$, $\lambda$, $\overline{Y}$, and $\overline{r\circ K}$.
\end{proof}

Now for any initial guess $W^{<,0}$, the sequence $(\Gamma^>)^n(W^{>,0})$ in the function space $D^>$, will converge pointwise to a function $W^>$, which is a fixed point of $\Gamma^>$. By Lemma \ref{lem:Lan}, we know that $W^>$ is $(L^>-1)$ times differentiable, with $(L^>-1)$-th derivative Lipschitz. 

It remains to do the error analysis in this case. Notice that

\begin{equation*}
E^>(\theta,s)=(\omega\partial_{\theta}+s\lambda\partial_s)W^{>,0}(\theta,s)-\begin{pmatrix}
0\\\lambda_0W^{>,0}_2(\theta, s)
\end{pmatrix}-\varepsilon Y^>(W^{>,0},\theta, s, \varepsilon)\phi(s),
\end{equation*}
along the characteristics, we have
\begin{align*}
E^>(c(t))=(\omega&\partial_{\theta}+se^{\lambda t}\lambda\partial_s)W^{>,0}(c(t))-\begin{pmatrix}
0\\\lambda_0W^{>,0}_2(c(t))
\end{pmatrix}\\&-\varepsilon Y^>(W^{>,0},c(t), \varepsilon)\phi(se^{\lambda t}).
\end{align*}

Hence,
\begin{equation*}
\Gamma^>(W^{>,0})(\theta,s)-W^{>,0}(\theta,s)=\int^{\infty}_0\begin{pmatrix}
1&0\\0&e^{-\lambda_0 t}
\end{pmatrix} E^>(c(t))dt.
\end{equation*}

Based on proof of Lemma \ref{lem:prob>}, we know that $\|E^>\|_{0,N}$ is bounded, therefore, for the maximum norm,
\begin{equation*}
\|\Gamma^>(W^{>,0})-W^{>,0}\|\leq \frac{1}{\lambda_0-\lambda N}\|E^>\|_{0,N}|s|^N,
\end{equation*}
and then
\begin{equation}\label{err>}
\|W^{>}-W^{>,0}\|\leq \frac{1}{1-\mu}\|\Gamma^>(W^{>,0})-W^{>,0}\|\leq\frac{1}{(1-\mu)(\lambda_0-\lambda N)}\|E^>\|_{0,N}|s|^N.
\end{equation}

If we take account of error estimations in \eqref{err0},\eqref{err1}, \eqref{errj}, and \eqref{err>}, we see that $l=0$ case of \eqref{aprw} is proved. Inequalities in \eqref{aprw} for $l\neq0$ is obtained using interpolation inequalities.

\subsection{Proof of Theorem~\ref{thm:smooth} and Theorem ~\ref{thm:smoothh}} 

The proof of  Theorem~\ref{thm:smooth}  and Theorem ~\ref{thm:smoothh} are obtained 
by just considering the functions $W^j_\eta$ as functions of 
two variables $\tilde W^j(\eta, \theta)$.  We 
can straightforwardly lift the operators $\Gamma^0$, $\Gamma^1$, and $\Gamma^j$ defined in 
\eqref{op0}, \eqref{op1}, and \eqref{opj}  to operators acting on 
functions of two variables.  We denote these operators
acting on two-variable functions by $\tilde \Gamma^0$, $\tilde \Gamma^1$, and $\tilde \Gamma^j$, respectively. At the same time, we lift the operator $\Gamma^>$ to an operator acting on functions of three variables, denoted as $\tilde \Gamma^>$.

To prove Theorem \ref{thm:smooth}, given a function 
 $\tilde W^0(\eta, \theta)$ of the variables $\eta, \theta$, we treat 
$\eta$ as a parameter and take into account that now, $Y$ and $r$ depend
also on $\eta$, in a smooth way. 

We use the same strategy as in the proof of 
Theorem~\ref{thm:zero}.  We first show the 
propagated bounds, similar to Lemma \ref{lem:prob0}, and then, show 
that the operator is a contraction under
a distance given by the  $C^0$ norm of the two-variable functions and the 
distance on the $\omega$, similar to Lemma \ref{lem:contraction0}. The distance here is quite analogue to the distance defined in
\eqref{dist}. Then, 
the desired result, Theorem~\ref{thm:smooth} 
 follows by an application of 
Lemma~\ref{lem:Lan}. 

The key to the propagated bounds is to show that 
if $\| \tilde W \|_{L+\text{Lip}} \le \tilde B^0$, for $\eps < \eps_0$, 
we have that the $C^{L+Lip}$ norm of the function components of 
$\tilde \Gamma^0( \tilde W)$ is also smaller or equal 
than $\tilde B^0$. 
This proof is rather straightforward and identical to 
the proof as before, because if we apply 
Fa\'a di Bruno formula, we obtain that the derivatives
of order up to $L$ of $\tilde \Gamma( \tilde W^0)$, 
are polynomials in the derivatives of $\tilde W^0$ of
order up to $L$ and the coefficients are just derivatives of 
$Y$, $r$ and combinatorial coefficients.  Similarly, we can 
estimate the Lipschitz constants because upper bounds for the
Lipschitz constants satisfy an  analogue of Fa\'a di Bruno formula.

To obtain the proof of the contraction in $C^0$, we just need to observe
that the proof of the contraction in Theorem~\ref{thm:zero}  only uses 
very few properties of $Y, r$. The properties hold uniformly for all 
$\eta$. One can obtain the contraction in the uniform 
norm on both variables. 

Analogous arguments as above for the operators $\tilde \Gamma^j$ and $\tilde \Gamma^>$, using similar methods in Sections \ref{ssc:pw1}, \ref{ssc:pwj}, \ref{ssc:phot}, complete the proof for Theorem \ref{thm:smoothh}. 

\section*{Acknowledgements}
The authors would like to thank A. Humphries and R. Calleja for several suggestions. The authors also thank A. Haro, T. M-Seara, and C. Zeng for discussions.
\bibliographystyle{alpha}
\bibliography{ref}

\end{document}